\documentclass[11pt]{scrartcl}
\usepackage[english]{babel}
\usepackage[T1]{fontenc}
\usepackage[utf8]{inputenc}
\usepackage{color,amssymb,amsfonts,amsthm,mathrsfs,graphicx,setspace,amsmath}
\usepackage{caption}
\usepackage[left=3cm,right=3cm,top=2.5cm,bottom=3cm]{geometry}
\usepackage{mathtools}
\usepackage{thm-restate}
\usepackage{xcolor}
\usepackage{tikz}
\usetikzlibrary{shapes.geometric,hobby,calc}
\usetikzlibrary{decorations.pathmorphing}
\usetikzlibrary{decorations.text}
\usetikzlibrary{shapes.misc}
\usetikzlibrary{calc}
\usetikzlibrary{decorations,shapes}
\usepackage{hyperref}
\usepackage{nameref,cleveref}
\usepackage[square, numbers, comma, sort&compress]{natbib}
\usepackage{enumitem}

\usepackage{macros}
\usepackage{dsfont}
\usepackage{soul}
\usepackage{xspace}
\usepackage[prependcaption,colorinlistoftodos]{todonotes}
\usepackage{titling}

\usepackage{import}
\usepackage[font=small]{caption}
\usepackage{subcaption}
\usepackage{multirow}
\usepackage{booktabs}
\usepackage{rotating}
\usepackage{array}
\newcolumntype{P}[1]{>{\centering\arraybackslash}p{#1}}
\newcolumntype{M}[1]{>{\centering\arraybackslash}m{#1}}

\usepackage{authblk}

\newcommand*\samethanks[1][\value{footnote}]{\footnotemark[#1]}

\hypersetup{
	colorlinks=true,
	linkcolor=myBlue,
	citecolor=myBlue,
	urlcolor=myLightBlue,
	bookmarksopen=true,
	bookmarksnumbered,
	bookmarksopenlevel=2,
	bookmarksdepth=3
}

\usetikzlibrary[positioning]
\usetikzlibrary{patterns}
\usetikzlibrary{shapes}
\usetikzlibrary{decorations.markings}
\usetikzlibrary{arrows}

\title{Directed \treewidth is closed under taking butterfly minors\thanks{This work is based on Gunwoo Kim’s bachelor’s thesis at the Technische Universität Berlin.}}

\author[1,2]{Gunwoo Kim\thanks{Supported by the Institute for Basic Science (IBS-R029-C1).}}
\author[2]{Meike Hatzel\samethanks}
\author[3]{Stephan Kreutzer}

\affil[1]{School of Computing, KAIST, Daejeon, South Korea}
\affil[2]{Discrete Mathematics Group, Institute for Basic Science (IBS), Daejeon, South Korea}
\affil[3]{Logic and Semantics Group, Technische Universität Berlin, Germany}
\affil[ ]{E-mail: \href{mailto:gunwoo.kim@kaist.ac.kr}{gunwoo.kim@kaist.ac.kr}, \href{mailto:research@meikehatzel.com}{research@meikehatzel.com}, \href{mailto:stephan.kreutzer@tu-berlin.de}{stephan.kreutzer@tu-berlin.de}}
\date{}

\newif\ifcomment
\commentfalse

\begin{document}

\maketitle

\vspace{-1cm}
\begin{abstract}
    Butterfly minors are a generalisation of the minor containment relation for undirected graphs to directed graphs.
    Many results in directed structural graph theory use this notion as a central tool next to directed treewidth, a generalisation of the width measure treewidth to directed graphs.
    Adler [JCTB'07] showed that the directed treewidth is not closed under taking butterfly minors.
    Over the years, many alternative definitions for directed treewidth appeared throughout the literature, equivalent to the original definition up to small functions.
    In this paper, we consider the major ones and show that not all of them share the problem identified by Adler.
\end{abstract}

\section{Introduction}

The width measure \emph{directed \treewidth} was first introduced by Reed~\cite{reed_introducing_1999} and Johnson, Robertson, Seymour and Thomas~\cite{johnson_directed_2001,johnson_addendum_2001}.
It soon became a central tool for the investigation of the structure of directed graphs \cite{2014pregridthm,kawarabayashi_directed_2015,kawarabayashi2022directed,campos2022fpt,2020dirflatwallI,2021dirflatwallII,2022dirflatwallIII} similar to the role \emph{\treewidth} plays in the context of the structure of graph minors by Robertson and Seymour~\cite{robertson_seymour_graphs}.

However, throughout the literature on directed \treewidth~\cite{reed_introducing_1999,johnson_directed_2001,johnson_addendum_2001,kreutzer_width-measures_2014,giannopoulou_directed_2022,kawarabayashi2022directed} one can find several different definitions for the concept.
This often has technical reasons in the sense that some definitions are more convenient for specific applications than others, and it was widely accepted that these definitions, which all lie within small functions of each other, could be interchanged for convenience.

Then, however, Adler~\cite{adler_directed_2007} discovered a major flaw of the original definition: The definition is not closed under taking \emph{butterfly minors}, which is a containment relation generalising minors for undirected graphs and used in all the aforementioned investigations of the structure of directed graphs.
This was often used as a discouraging argument against directed \treewidth as a tool \cite{hlineny2008width,wiederrecht2020dtwone,2019cyclewidth}.
Looking at the provided examples, however, shows that the gap is never really large.
Thus, with different definitions around that are similar but not quite the same, it raises the question of whether one of these definitions could be closed under taking butterfly minors.

The width measure \emph{cyclewidth}~\cite{2019cyclewidth}, introduced by Hatzel, Rabinovich and Wiederrecht, shows that there is at least one equivalent definition that is closed under taking butterfly minors.
However, the definition differs enough for it to carry a different name.
But it is equivalent up to a linear function as recently shown by Bowler, Ghorbani, Gut, Jacobs, and Reich~\cite{2024linearcyclewidthbound}.
Another hint as to why one might suspect one of the definitions to be closed is that the maximum size of a bramble in a digraph is closed under taking butterfly minors, which we show in~\cref{sec:brambles_closed}.

Indeed, it is true that not all the alternative definitions share the flaw of possibly growing when taking butterfly minors; rather, it turns out to be a technical result of the initial choice of definition.
The knowledge about there being definitions that are closed under taking butterfly minors has circulated within the community for a while.
In this paper, we conduct a thorough investigation of the major definitions and formally establish their properties regarding taking butterfly minors.

In~\cref{sec:dtw_definitions}, we state five definitions that can be found throughout the literature and conduct an extensive comparison between them.
This leads to identifying one definition that is closed under taking butterfly minors while the others are not.
For this reason, this definition is becoming the main definition used in the most recent literature~\cite{kawarabayashi2022directed,elementarypaper2024}.
We provide a more precise overview of our results in~\cref{sec:result_overview} after introducing the relevant definitions.

Throughout the paper, we denote the directed \treewidth of a digraph $D$ by $\dtw{D}$ in all statements that hold for all of the notions of directed \treewidth that we provide definitions of in~\cref{sec:dtw_definitions}.

\section{Preliminaries}

All graphs in this paper are directed, simple, and finite.
For a digraph $D$, we refer to its vertex set by $\V{D}$ and its edge set by $\E{D}$.
The \emph{out-neighbourhood} of a vertex $u$ in $D$ is defined by $\OutN{D}{u} \coloneqq \Set{v \in \V{D} \mid (u,v)\in\E{D}}$, and the \emph{in-neighbourhood} by $\InN{D}{u} \coloneqq \Set{v \in \V{D} \mid (v,u)\in\E{D}}$.
We define the \emph{out-degree} and the \emph{in-degree} of $u$ by $\Outdeg{D}{u} \coloneqq \Abs{\OutN{D}{u}}$ and $\Indeg{D}{u} \coloneqq \Abs{\InN{D}{u}}$.
We omit the index in these definitions whenever the digraph $D$ is clear from the context.
A subgraph $H$ of a digraph $D$ is a digraph with $\V{H} \subseteq \V{D}$ and $\E{H}\subseteq \E{D}$.
For an edge $e =(u,v)$ we call $u$ the \emph{tail} of $e$ and $v$ the \emph{head} of $e$.
The digraph $D$ is \emph{strongly connected} if, for every pair of vertices $u$ and $v$, there is a directed path from $u$ to $v$ in $D$ as well as a directed path from $v$ to $u$.
A maximal strongly connected subgraph of $D$ is called a \emph{strong component} of $D$.
The edge $(u,v)$ is \emph{butterfly contractible} if it is the only edge with tail $u$ or the only edge with head $v$.
A \emph{butterfly minor} of $D$ is a digraph $D'$ obtained from a subgraph of $D$ by contracting butterfly contractible edges; we write $D' \butterfly D$.

The following easy \namecref{lem:walkInMinor} about butterfly contractions shows that they cannot create new closed walks, which we make use of throughout the paper.

\begin{observation}\label{lem:walkInMinor}
	Let $D'$ be obtained from a digraph $D$ by contracting a butterfly contractible edge $(u,v)$ into the vertex $x$.
	If there is a closed walk $W'$ in $D'$ containing $x$, then there is a closed walk $W$ in $D$ containing all vertices of $V(W') - \{x\}$, and $u$ or $v$ or both.
    Specifically, if $\Outdeg{}{u} = 1$, then $W$ always contains $v$; otherwise, $W$ always contains $u$.
\end{observation}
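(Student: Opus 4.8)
The plan is to track how a closed walk in $D'$ lifts back to $D$ under the single butterfly contraction of $(u,v)$ into $x$. The key observation is that every edge of $D'$ corresponds naturally to an edge of $D$: edges not incident to $x$ are unchanged, while edges of $D'$ incident to $x$ come from edges of $D$ incident to $u$ or $v$. Concretely, an edge $(x,w)$ in $D'$ arises from an edge $(u,w)$ or $(v,w)$ in $D$, and an edge $(w,x)$ arises from $(w,u)$ or $(w,v)$ in $D$. So the strategy is to replace the vertex $x$ wherever it appears on $W'$ by a short segment through $u$ and/or $v$ in $D$, using the edge $(u,v)$ to bridge between them when necessary, and check that the result is a closed walk visiting the required vertices.

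First I would split into the two cases coming from the definition of butterfly contractibility. Suppose $(u,v)$ is contractible because $\Outdeg{}{u}=1$, i.e. $(u,v)$ is the only edge with tail $u$. Then in $D'$ every edge leaving $x$ came from an edge leaving $v$ in $D$ (since $u$ had no other out-edge), while every edge entering $x$ came from an edge entering $u$ or entering $v$. I would traverse $W'$ and, each time it passes through $x$ via an incoming edge $e_{\mathrm{in}}$ and outgoing edge $e_{\mathrm{out}}$, reconstruct the passage in $D$. The outgoing edge forces the walk to leave from $v$; the incoming edge enters either $u$ or $v$. If it enters $v$, we simply pass through $v$; if it enters $u$, we route $u \to v$ along the edge $(u,v)$ and then leave from $v$. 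Either way the segment through $x$ is replaced by a segment through $v$ (and possibly $u$), so $W$ always contains $v$, as claimed. The symmetric case $\Indeg{}{v}=1$ is handled by the dual argument: now every edge entering $x$ came from an edge entering $u$, and each passage through $x$ is replaced by a segment through $u$ (routing $u\to v$ via $(u,v)$ only if the outgoing edge left from $v$), so $W$ always contains $u$.

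In both cases the edges not touching $x$ are reused verbatim, and the only newly inserted edge is $(u,v)$ itself, which exists in $D$; hence the reconstructed sequence is a genuine walk in $D$, and since $W'$ was closed the reconstruction is closed as well. It evidently contains every vertex of $V(W') - \{x\}$ together with $u$ or $v$ (or both), matching the statement. The main technical point to be careful about, rather than a genuine obstacle, is that $W'$ may pass through $x$ several times: I would therefore phrase the replacement locally at each visit to $x$, noting that the segments so produced are internally vertex-disjoint from the rest of $W$ except where they must share $u$ or $v$, so concatenating them yields a single closed walk. Since we only require a closed \emph{walk} (not a cycle or a path), repeated vertices cause no difficulty, which is precisely why the statement is clean.
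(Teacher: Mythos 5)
Your proof is correct, and it is exactly the routine lifting argument that the paper leaves implicit by stating this as an easy observation without proof: split on which butterfly condition holds, note that every edge of $D'$ incident to $x$ corresponds to an edge of $D$ incident to $u$ or $v$, and replace each passage of $W'$ through $x$ by a local detour through $v$ (respectively $u$), inserting the edge $(u,v)$ when needed. Your case analysis also correctly yields the refined claim about which of $u$, $v$ must appear, so nothing is missing.
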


\paragraph{Butterfly models}
As for undirected minors, there is a different perspective on butterfly minors in terms of \textsl{models}.
An \emph{arborescence} is obtained from an undirected tree by choosing a vertex and directing all edges towards it (\emph{in-branching}) or away from it (\emph{out-branching}).
If a digraph $D'$ is a butterfly minor of a digraph $D$, then there exists a function $\mu$ that assigns to every edge $e \in E(D')$ an edge $e \in E(D)$ and to every $v \in V(D')$ a subgraph $\mu(v) \subseteq D$ such that
	\begin{itemize}	
		\item $\mu(u)$ and $\mu(v)$ are vertex disjoint subgraphs of $H$ for any $u \neq v \in V(D')$,
		\item for all $e = (u,v) \in E(D')$, the edge $\mu(e)$ has its tail in $\mu(u)$ and its head in $\mu(v)$, and
		\item for all $v \in V(D')$, $\mu(v)$ is the union of an in-branching $T_i$ and an out-branching $T_o$, which only have their roots in common, such that for every $e \in E(D')$, if $v$ is the head of $e$, then the head of $\mu(e)$ is in $T_i$, and if $v$ is the tail of $e$, then the tail of $\mu(e)$ is in $T_o$.
	\end{itemize}
Such a function $\mu$ is called a \emph{butterfly model}, and its existence is equivalent to $G$ containing $H$ as a butterfly minor; see~\cite{amiri_erdos-posa_2016} for details.

Before we state the formal definition(s) for directed \treewidth in \cref{sec:dtw_definitions}, we talk about a few related concepts.

\subsection{Cops-and-robber games}

\emph{Cops-and-robber games}, also known as \emph{graph searching games}, are a form of pursuit-evasion games played on a graph or a digraph (see \cite{gradel_graph_2011,fomin_annotated_2008} for surveys).
There are many variants of these games corresponding to different graph parameters or classes; here, we concentrate on the version introduced by Johnson, Robertson, Seymour, and Thomas~\cite{johnson_directed_2001}, which corresponds to their definition of directed \treewidth. 
In the game, cops try to capture the fugitive robber with as few cops as possible.
Each cop and the robber can occupy at most one vertex of a given digraph $D$, and the robber starts the game by occupying one.
A current game \emph{position} is denoted by a pair $(C,v)$, where $C \subseteq V(D)$ is the set of vertices occupied by cops, called \emph{cop position}, and $v \in V(D)$ is the vertex occupied by the robber, called \emph{robber position}. So the start position is $(\emptyset, v)$ for some $v \in V(D)$. 

The game is played in rounds, and in each round with the current position $(C,v)$, the cops first announce their new position $C' \subseteq V(D)$. The robber can escape to any $v' \in V(D)$ in the same strong component of $D - (C \cap C')$ as $v$, i.e.~he can move to $v'$ along a directed cop-free path in $D - (C \cap C')$ only if there exists a directed cop-free path from $v'$ to $v$ as well.
Finally, the cops are placed on $C'$; this completes a round, and the new position is $(C',v')$.
A \emph{play} in $D$ is a sequence $P = (C_0,v_0),(C_1,v_1),\dots$ of game positions, where each of the robber's moves adheres to the rules described above.
If a cop position $C_i$ contains the robber position $v_i$ in the $i$-th round for some $i \in \N$, the cops catch the robber and win the play.
Otherwise, the robber can escape forever and win. Cops win trivially on any given digraph by placing cops on every vertex.
Hence, an interesting factor of the game is the minimal number of cops needed to win on a digraph.

A \emph{strategy for $k$ cops} on a digraph $D$ is a function $f_c : [V(D)]^{\leq k} \times V(D) \rightarrow [V(D)]^{\leq k}$ that assigns a cop position $C'$ to each game position $(C,v)$. 
A play $P = (C_0,v_0),(C_1,v_1),\dots$ is \emph{consistent with} $f_c$ if $C_{i+1} = f_c(C_i,v_i)$ for all $i$. If the cops win in every play $P$ consistent with $f_c$, we say $f_c$ is a \emph{winning strategy for $k$ cops}.
Given a play $P = (C_0,v_0),(C_1,v_1),\dots$, the \emph{robber space} $R_i \subseteq V(D)$ for each $i$ is a strong component of $D - C_i$ such that $R_{i-1}$ and $R_i$ are contained in the same strong component of $D - (C_{i-1} \cap C_i)$, and we let $R_0 = V(D)$.
Then, it is clear that $v_i \in R_i$ for all $i$.
A play $P = (C_0,v_0),(C_1,v_1),\dots$ is called \emph{cop-monotone} if for all $i < j < k$ we have $C_i \cap C_k \subseteq C_j$, i.e.~the cops never reoccupy vertices.
On the other hand, a play is called \emph{robber-monotone} if $R_i \supseteq R_{i+1}$ for all $i$, i.e.~if a vertex is not available to the robber once, then it remains unavailable for the rest of the play.
A strategy $f_c$ for $k$ cops is robber/cop-monotone if every play consistent with $f_c$ is robber/cop-monotone.

Johnson, Robertson, Seymour, and Thomas established the following connection between this game and directed \treewidth. 

\begin{lemma}[\cite{johnson_directed_2001}] \label{thm:c&r_game}
	Let $D$ be a digraph and $k \geq 1$.
	\begin{enumerate}
		\item If $\dtw{D} < k$, then $k$ cops have a winning strategy in the cops-and-robber game on $D$.
		\item If $k$ cops have a winning strategy in the cops-and-robber game on $D$, monotone or not, then $\dtw{D} \leq 3k+1$.
		\item If $k$ cops have a winning strategy in the cops-and-robber game on $D$, then $3k+2$ cops have a robber-monotone winning strategy on D.
	\end{enumerate}
\end{lemma}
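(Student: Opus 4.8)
My plan is to prove the three statements by first setting up the two constructive translations between directed tree decompositions and cop strategies, and then deriving the monotonicity statement by composing them. For the first statement I would start from a directed tree decomposition $(T,\beta,\gamma)$ witnessing $\dtw{D} < k$ and turn it into a strategy for $k$ cops. The cops descend the arborescence $T$ from its root while maintaining the invariant that the robber occupies a vertex in the union of the bags of the subtree rooted at the current node $t$. At node $t$ the cops occupy $\beta(t)$ together with the guard sets of the edges incident with $t$; by the width bound this is at most $k$ vertices. Since the guards block every directed walk that would leave and re-enter the region below a child, the robber cannot escape the subtree he is currently in, and the cops simply follow him into the child subtree containing his position. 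As this subtree strictly shrinks each round, the robber is eventually confined to a single bag and caught. I note already that this strategy is robber-monotone by construction, since the robber's reachable region only contracts along the descent; I reuse this for the third statement.

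The second statement is the crux and the main obstacle, as here one must build a decomposition of width at most $3k+1$ from a winning strategy that need not be monotone. The natural route is through \emph{havens}. I would first show that a winning strategy for $k$ cops precludes a haven of order $k+1$, where a haven assigns to each cop set $Z$ with $\Abs{Z}\le k$ a strong component of $D-Z$ in a consistent, shrinking fashion; such a haven is exactly a blueprint for a robber who survives forever, so its non-existence is the combinatorial content of the cops winning. Assuming no haven of order $k+1$, I would construct an arboreal decomposition recursively: for a strongly connected region $R$ to which the robber could be confined, the absence of a haven yields a separator $W$ of bounded size whose removal splits $R$ into strong components that are each properly smaller. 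These components become the child subtrees, with sets derived from $W$ placed on the current bag and on the incident guards, and one recurses on each piece before assembling the results into a single arborescence.

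The delicate point, and the source of the factor three, is the width accounting: bounding the width requires simultaneously controlling the separator that splits the current region, the guard set inherited from the parent edge, and the overlap between consecutive separators that a non-monotone strategy may force, and summing these three contributions is what yields $3k+1$ rather than $k$. Finally, the third statement follows by composing the first two. A winning strategy for $k$ cops gives, by the second statement, a decomposition of width at most $3k+1$, that is, of width less than $3k+2$; feeding this decomposition into the construction of the first statement produces a winning strategy for $3k+2$ cops, which is robber-monotone by the observation made there. Thus essentially all of the work concentrates in the second statement, while the first and third are short once the recursive separation argument and its width bookkeeping are in place.
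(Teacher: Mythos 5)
Your parts~1 and~2 follow the standard route of Johnson, Robertson, Seymour and Thomas (decomposition $\to$ strategy by descending $T$ with the cops on $\Gamma(t)$; strategy $\to$ decomposition via havens and recursive separation), which is exactly the source the paper cites for this lemma instead of proving it. The genuine gap is your parenthetical claim in part~1 that the descent strategy ``is robber-monotone by construction'', together with the proof of part~3 that you build on it. For the decompositions of \cref{def:1dtd,def:2dtd,NCWE,def:3dtd} nothing prevents the guard $\gamma(e'')$ of a sibling edge $e'' = (t,t'')$ --- nor, for the NCW-type definitions, even the guard $\gamma((t,t'))$ itself --- from intersecting $\beta(T_{t'})$. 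When the cops move from $\Gamma(t)$ to $\Gamma(t')$ such vertices are vacated, lie inside the region to which the robber is confined, and may join his strong component, so the robber space can grow. This is precisely the content of the paper's observation following \cref{def:4dtd}: only condition \ref{D3.4} of an \dtdSCD\ forces the guards around $t$ to be disjoint from $\bigcup_i \beta(T_{t_i})$, and only then is the descent robber-monotone. Moreover, the failure is not a repairable bookkeeping issue: the digraph $D_2$ of \cref{fig:D2} admits an \dtdNCW\ of width $3$ (\cref{fig:dtdNCW_D2}), so your part-1 construction gives a winning strategy for $4$ cops on $D_2$, yet by Adler's Theorem~8 (invoked in the proof of \cref{thm:c&r_game_closure}) the robber \emph{wins} against $4$ cops who must play robber-monotonically on $D_2$. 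Hence no argument can make the part-1 strategy robber-monotone in general with the same number of cops.

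Consequently, part~3 does not follow by composing parts~2 and~1: feeding the width-$(3k+1)$ decomposition from part~2 into part~1 yields a winning strategy for $3k+2$ cops, but not a robber-monotone one, unless the decomposition carries additional structure. To close the gap you must ensure that the decomposition constructed in part~2 satisfies the disjointness property \ref{D3.4}, i.e.\ that it is an \dtdSCD; this is what the chain via $k$-linked sets and balanced separators provides (compare \cref{thm:theorem_k_linked_set} and the proof of \cref{lem:comparing_dtwSCD_with_others}, which derive a bound on $\dtwSCD{D}$ rather than on the weaker notions), and only for such a decomposition does your monotonicity argument for the descent go through.
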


\subsection{Obstructions}

We give a short overview of some known obstructions for directed \treewidth.

\paragraph{$k$-linked sets}
    Let $W$ be a set of vertices in a digraph $D$. 
	A \emph{balanced $W$-separator} is a set $S \subseteq V(D)$ such that every strong component of $D-S$ contains at most $\frac{|W|}{2}$ vertices of $W$. The \emph{order} of the separator is $|S|$.
	A set $W \subseteq V(D)$ is \emph{$k$-linked} if $D$ does not contain a balanced $W$-separator of order $k$.

\begin{lemma}[Reed~\cite{reed_introducing_1999}]
    \label{lem:lemma_balanced_w_separator}
	Let $D$ be a digraph. If $\dtw{D} \leq k-1$, then every set $W \in V(D)$ has a balanced $W$-separator of order at most $k$, i.e.~$D$ does not contain a $k$-linked set.
\end{lemma}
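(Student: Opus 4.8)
The plan is to prove the contrapositive and to route everything through the cops-and-robber game, using the first item of \cref{thm:c&r_game}. Suppose some $W \subseteq V(D)$ is $k$-linked, i.e.\ for every $S \subseteq V(D)$ with $|S| \leq k$ some strong component of $D - S$ contains more than $|W|/2$ vertices of $W$. Note that such a majority component is always \emph{unique}, since two strong components each meeting $W$ in more than half of $W$ would together contain more than $|W|$ vertices of $W$. I would then show that the robber defeats $k$ cops; by the contrapositive of that statement this gives $\dtw{D} \geq k$, that is $\dtw{D} > k - 1$, as desired. The robber maintains the invariant that after round $i$ its position $v_i$ lies in the unique strong component $R_i$ of $D - C_i$ with $|W \cap R_i| > |W|/2$. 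For the base case the robber simply starts inside the unique majority strong component of $D = D - C_0$, which exists by $k$-linkedness applied to $S = \emptyset$.

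The heart of the matter is the transition. When the cops announce $C_{i+1}$, set $S \coloneqq C_i \cap C_{i+1}$, so $|S| \leq k$. By $k$-linkedness $D - S$ has a unique strong component $X$ with $|W \cap X| > |W|/2$. Since $S \subseteq C_i$, the graph $D - C_i$ is a subgraph of $D - S$, so $R_i$ is contained in a single strong component of $D - S$; by the majority count this component is $X$, whence $v_i \in X$. Applying $k$-linkedness to $C_{i+1}$ (also of order at most $k$) yields a unique majority strong component $Y$ of $D - C_{i+1}$; since $S \subseteq C_{i+1}$ likewise makes $D - C_{i+1}$ a subgraph of $D - S$, the component $Y$ lies inside a majority component of $D - S$, forcing $Y \subseteq X$. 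The movement rule permits the robber to relocate to any vertex in the same strong component of $D - S$ as $v_i$, namely anywhere in $X$; as $Y \subseteq X$, it moves to some $v_{i+1} \in Y$. This restores the invariant at $(C_{i+1}, v_{i+1})$, and since $v_{i+1} \in Y$ is disjoint from $C_{i+1}$ the robber is never caught. Hence $k$ cops have no winning strategy.

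The main obstacle is precisely the bookkeeping in this transition: it relies on the uniqueness of the majority component together with the fact that both $D - C_i$ and $D - C_{i+1}$ are subgraphs of $D - S$, which pins the old and the new majority components inside the \emph{same} strong component $X$ of the intermediate graph $D - S$ that the movement rule refers to. Two small points remain. I read the definition of being $k$-linked as forbidding balanced $W$-separators of order at most $k$, matching the conclusion; padding a separator with extra vertices preserves the property (deleting more vertices only refines strong components), so for $|V(D)| \geq k$ this coincides with forbidding order exactly $k$, and for $|V(D)| < k$ the whole vertex set is a balanced separator of order at most $k$. The degenerate cases $W = \emptyset$ and $|W| = 1$ are never $k$-linked for $k \geq 1$ and need no separate treatment. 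Finally, once the tree-decomposition definitions of \cref{sec:dtw_definitions} are in hand, the lemma also follows from a centroid argument on the decomposition tree; I prefer the game-based route here because it uses only tools already established.
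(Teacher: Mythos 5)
Your proof is correct. Note that the paper contains no proof of this statement at all: the lemma is imported from Reed~\cite{reed_introducing_1999}, and the argument that citation points to is the decomposition-based one you mention in passing at the end, namely a centroid-style argument: given a directed tree decomposition of width at most $k-1$, orient each tree edge towards the side containing the unique majority strong component of $D$ minus the corresponding guard (using \cref{lem:lemma_strong_separator}), and extract a node $t$ whose set $\Gamma(t)$, of size at most $k$, is a balanced $W$-separator. Your route is genuinely different: you prove the contrapositive through the cops-and-robber game, turning a $k$-linked set into an explicit robber strategy against $k$ cops and then invoking the contrapositive of the first part of \cref{thm:c&r_game}. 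The bookkeeping you flag as the heart of the matter is sound: majority components are unique because strong components are disjoint; since $C_i \cap C_{i+1}$ is contained in both $C_i$ and $C_{i+1}$, both $D - C_i$ and $D - C_{i+1}$ are subgraphs of $D - (C_i \cap C_{i+1})$, which forces $R_i$ and the new majority component $Y$ into the same strong component $X$ of the intermediate graph, and $X$ is exactly the region the movement rule lets the robber roam in; moreover cop positions in a $k$-cop game have size at most $k$, so $k$-linkedness applies to $C_i \cap C_{i+1}$ and to $C_{i+1}$. Your two housekeeping remarks are genuinely needed and correct: the paper's definition literally forbids balanced separators of order exactly $k$, and the padding observation reconciles this with the ``order at most $k$'' phrasing of the conclusion; and sets $W$ with $|W| \leq 1$ are never $k$-linked. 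There is also no circularity in using \cref{thm:c&r_game}: its first part is proved in \cite{johnson_directed_2001} by constructing a cop strategy directly from a decomposition, independently of balanced separators. As for what each approach buys: yours stays entirely within results already stated in the paper and is uniform over all five notions of directed \treewidth at once (since \cref{thm:c&r_game} is stated for the generic $\dtw{D}$), but it establishes the existence of balanced separators only non-constructively, by contraposition through a cited black box; the centroid argument is self-contained and exhibits the separator explicitly as a set $\Gamma(t)$ of the decomposition.
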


\begin{lemma}[Johnson, Robertson, Seymour, Thomas~\cite{johnson_directed_2001}] 
    \label{thm:theorem_k_linked_set}
	Every digraph $D$ either has $\dtw{D} \leq 3k + 1$ or contains a $k$-linked set, which witnesses that $\dtw{D} \geq k$.
\end{lemma}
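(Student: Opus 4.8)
We first dispose of the easy implication, which is exactly the \emph{witness} clause. If $D$ contains a $k$-linked set $W$, then by definition $W$ admits no balanced $W$-separator of order at most $k$, so by the contrapositive of \cref{lem:lemma_balanced_w_separator} we cannot have $\dtw{D}\le k-1$; hence $\dtw{D}\ge k$. Since the dichotomy ``$\dtw{D}\le 3k+1$ or $D$ contains a $k$-linked set'' is logically equivalent to the single implication ``if $D$ has no $k$-linked set then $\dtw{D}\le 3k+1$'', it remains to prove the latter. Here \emph{no $k$-linked set} means that every $W\subseteq\V{D}$ admits a balanced $W$-separator of order at most $k$.

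The plan is to build (the appropriate notion of) a directed tree decomposition of width at most $3k+1$ by a top-down recursion whose nodes carry a strongly connected region $R$ of $D$ together with a \emph{guard} set $Z\subseteq\V{D}$, maintaining the invariant $|Z|\le 2k$. At the root the region is all of $D$ and $Z\coloneqq\emptyset$. At a node with region $R$ and guard $Z$, I would apply the hypothesis to $W\coloneqq Z$ to obtain a balanced $Z$-separator $S$ with $|S|\le k$, record $Z\cup S$ as the bag of this node, and create one child for each strong component $C$ of $D-S$ meeting $R$, with region $C\cap R$ and new guard $(Z\cap C)\cup S$. Because $S$ is balanced for $Z$, every component satisfies $|Z\cap C|\le |Z|/2\le k$, so the new guard has size at most $|Z|/2+|S|\le k+k=2k$ and the invariant is preserved. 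The bags have size $|Z\cup S|\le 2k+k=3k$, which is precisely where the factor three originates, and this translates into width at most $3k+1$ once the separators $S$ are recorded along the tree edges as guarding sets.

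The recursion terminates because each child's region is strictly smaller than its parent's (the separator $S$ is removed from $R$), and it never gets stuck: the assumption that \emph{no} set is $k$-linked guarantees that a balanced $Z$-separator of order at most $k$ exists at every node, for every guard $Z$ with $|Z|\le 2k$. Read contrapositively, if $\dtw{D}>3k+1$ the construction must fail at some node, and the guard $Z$ there is then a set with no balanced $Z$-separator of order at most $k$, that is, a $k$-linked set. The main obstacle is verifying that the object produced is genuinely a valid directed tree decomposition of the claimed width: one must check the directed guarding/connectivity axioms (that the strong components of $D-S$ interact correctly with the recorded guard sets and that the regions are suitably normal with respect to their guards) and track the small additive constants carefully, which is the technical heart of the Johnson--Robertson--Seymour--Thomas argument. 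Alternatively, the same dichotomy can be routed through the cops-and-robber game of \cref{thm:c&r_game}, converting the recursive separators into a cop strategy and a failure of the construction into a robber escape; but without reconstructing essentially this decomposition the game route only yields looser constants.
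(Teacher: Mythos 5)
The paper does not actually prove this lemma: it is quoted verbatim from Johnson, Robertson, Seymour and Thomas~\cite{johnson_directed_2001}, so there is no internal proof to compare your sketch against, and it has to be judged on its own merits. Your easy direction is fine: a $k$-linked set plus the contrapositive of \cref{lem:lemma_balanced_w_separator} gives $\dtw{D} \geq k$, which is exactly what the ``witnesses'' clause means.

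The hard direction, however, has a genuine gap, and it is not the one you flag as ``the technical heart''. You apply the balanced-separator hypothesis to $W \coloneqq Z$, the current guard, but for such small sets the hypothesis is vacuous: for \emph{any} set $Z$ with $|Z| \leq k$, the set $S = Z$ itself is a balanced $Z$-separator of order at most $k$ (no strong component of $D - Z$ contains any vertex of $Z$), and at the root $S = \emptyset$ is a balanced $\emptyset$-separator. Consequently your termination claim --- ``each child's region is strictly smaller than its parent's (the separator $S$ is removed from $R$)'' --- is false: nothing forces $S$ to meet $R$ at all (inductively $Z \cap R = \emptyset$, so $S$ can live entirely outside the region), and with $Z = \emptyset$, $S = \emptyset$ the child node is identical to its parent and the recursion loops forever. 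The actual Johnson--Robertson--Seymour--Thomas/Reed argument applies the separator hypothesis not to the guard alone but to a set $W$ obtained by padding $Z$ with vertices of the current region until $|W| > 2k$ (say $|W| = 2k+1$); only then does balancedness have content --- every strong component of $D - S$ contains at most $k$ vertices of $W$ and hence misses part of it --- which is what forces strict progress and produces the constant $3k+1 = (2k+1)+k$. Beyond this, the issues you defer are also not mere bookkeeping: your bags $Z \cup S$ overlap between parent and child (every definition in \cref{sec:dtw_definitions} requires the bags to partition $V(D)$), and because your separators are chosen in $D$ rather than inside the region, the subtree below a child for a strong component $C$ may acquire vertices outside $C$, which is precisely where the guarding condition can break. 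The cops-and-robber detour via \cref{thm:c&r_game} does not rescue the argument either, since turning separators into a winning cop strategy needs the same progress mechanism.
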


\paragraph{Havens} \label{subsection:section_havens}
	A \emph{haven} in a digraph $D$ of order $k$ is a function $h : [V(D)]^{< k} \rightarrow \mathcal{P}(V(D))$ assigning to every set $X \subseteq V(D)$ with $|X| < k$ the vertex set of a strong component of $D-X$ such that if $Y \subseteq X \subseteq V(D)$ with $|X| < k$, then $h(X) \subseteq h(Y)$.

\begin{theorem}[\cite{johnson_directed_2001}] \label{thm:theorem_haven}
	Let $D$ be a digraph and $k \geq  1$.
	\begin{enumerate}
		\item If $k$ cops have a winning strategy in the cops-and-robber game on $D$, then $D$ has no haven of order $k+1$.
		\item If $D$ has a haven of order $k+1$, then $\dtw{D} \geq k$.
		\item If $D$ has no haven of order $k+1$, then $\dtw{D} \leq 3k+1$.
	\end{enumerate}
\end{theorem}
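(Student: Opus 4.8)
The plan is to treat the three items in turn, using \cref{thm:c&r_game} to convert between cop strategies and $\dtw{D}$, and proving the genuine content about havens by hand.

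For the first item I would use the haven directly as a robber strategy. Assume towards a contradiction that $k$ cops have a winning strategy $f_c$ and that $h$ is a haven of order $k+1$, so $h(X)$ is defined for every $X$ with $|X| \le k$. The robber starts at an arbitrary $v_0 \in h(\emptyset)$ and maintains the invariant $v_i \in h(C_i)$. Given a position $(C_i,v_i)$ with $v_i \in h(C_i)$, let the cops announce $C_{i+1} = f_c(C_i,v_i)$ and set $Z = C_i \cap C_{i+1}$; since $|Z| \le k$ the component $h(Z)$ is defined, and monotonicity gives $h(C_i) \subseteq h(Z)$ and $h(C_{i+1}) \subseteq h(Z)$. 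As $h(Z)$ is a strong component of $D - Z = D - (C_i \cap C_{i+1})$, it is strongly connected, so the robber may legally move from $v_i \in h(Z)$ to any chosen $v_{i+1} \in h(C_{i+1}) \subseteq h(Z)$. This re-establishes the invariant, and since $v_{i+1} \in h(C_{i+1})$ lies in a component of $D - C_{i+1}$ it avoids $C_{i+1}$; hence the robber evades forever, contradicting that $f_c$ is winning.

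The second item is then immediate: by the first item a haven of order $k+1$ prevents $k$ cops from winning, so the contrapositive of the first item of \cref{thm:c&r_game} yields $\dtw{D} \ge k$. For the third item I would prove the contrapositive, that if $k$ cops have no winning strategy then $D$ has a haven of order $k+1$; together with the first item this shows that havens of order $k+1$ characterise exactly the situations in which the robber wins, and feeding the resulting cop strategy into the second item of \cref{thm:c&r_game} produces the bound $\dtw{D} \le 3k+1$, which is where the slack in the constant enters. To build the haven from the failure of the cops, I would consider the greatest set $B$ of pairs $(X,R)$, with $|X| \le k$ and $R$ a strong component of $D-X$, closed under the rule that for every cop set $X'$ with $|X'| \le k$ there is some $(X',R') \in B$ with $R$ and $R'$ contained in the same strong component of $D - (X \cap X')$. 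Since the robber's escapes from a position depend only on the strong component of its current vertex (removing $X \cap X' \subseteq X$ only merges components of $D-X$), membership in $B$ is exactly the condition that the robber survives forever from $R$ against $k$ cops, so $B$ is nonempty precisely when the cops have no winning strategy. The closure rule, specialised to an arbitrary $X'$, shows that every admissible $X'$ carries at least one good component, giving totality; specialised to $X' = Y \subseteq X$ it shows that if $(X,R) \in B$ then $(Y,\hat R) \in B$, where $\hat R \supseteq R$ is the strong component of $D-Y$ containing $R$.

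The main obstacle is to refine this multivalued skeleton into a single-valued function $h$ that is genuinely monotone, i.e.\ with $h(X) \subseteq h(Y)$ for all $Y \subseteq X$. Choosing good components independently fails, since for a fixed $X$ there need be no good component of $D-X$ simultaneously contained in the already-chosen $h(X \setminus \{x\})$ over all $x \in X$; the choices across incomparable sets must be coordinated, and because the poset of cop sets of size at most $k$ is not directed one cannot simply invoke a compactness argument. I expect to resolve this exactly as in the undirected haven--search duality of Seymour and Thomas, selecting a coherent branch of $B$ by exploiting the full forward-closure of the fixpoint rather than mere downward closure. This coherent selection is the technical heart of the third item, whereas the passage between cop strategies and $\dtw{D}$ is entirely delegated to \cref{thm:c&r_game}.
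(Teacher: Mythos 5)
Your treatment of the first two items is correct and is essentially the standard argument: the robber maintains $v_i \in h(C_i)$ and uses monotonicity through $Z = C_i \cap C_{i+1}$ to move legally inside the strong component $h(Z)$ of $D - Z$, and the second item then follows from the contrapositive of the first part of \cref{thm:c&r_game}.

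The third item, however, contains a genuine gap, and it is exactly the step you flag at the end: your whole derivation rests on the claim that if $k$ cops have no winning strategy then $D$ has a haven of order $k+1$, i.e.\ on an \emph{exact} game--haven duality for digraphs, and this claim is never proved. Your fixpoint set $B$ does give totality and downward closure, but a haven requires a single-valued choice $h$ with $h(X) \subseteq h(Y)$ for all pairs $Y \subseteq X$ simultaneously, and the hoped-for ``coherent branch'' in the style of Seymour and Thomas is precisely what is not available in the directed setting. In the undirected case the passage from a robber escape strategy to a haven is not done by coordinating choices in such a relation at all; it goes through the tree-width--haven duality, whose proof uses submodularity of the border function, and the directed analogue of that duality is \emph{false}: \cref{cor:D1_haven} in this paper shows that a digraph can have $\dtw{D} \geq 4$ for several of the notions considered here while having no haven of order $5$, so the converse of the second item fails. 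What is actually known in the directed setting only yields a haven of order $k+1$ when the robber defeats $3k+2$ cops (by combining the third item with the first part of \cref{thm:c&r_game}), which is a factor of three weaker than the statement you need; closing this factor is not a routine adaptation but an open strengthening.

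Your route is also inverted relative to the proof in \cite{johnson_directed_2001} that this paper cites. There, the third item is proved \emph{directly}: from the non-existence of a haven of order $k+1$ one inductively constructs a directed tree decomposition of width at most $3k+1$, and this construction is where the constant $3$ originates. The second part of \cref{thm:c&r_game} is then obtained as a corollary of the first and third items of \cref{thm:theorem_haven}, not the other way around. So delegating the decomposition-building step to \cref{thm:c&r_game} hides the actual content of the theorem inside the lemma you are black-boxing, and the missing link you would have to supply instead --- exact game--haven duality --- is a statement stronger than the theorem itself and not known to hold.
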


\paragraph{Brambles} \label{subsection:section_brambles}

	A \emph{(strong) bramble} in a digraph $D$ is a set $\mathcal{B}$ of strongly connected subgraphs of $D$ such that if $B, B' \in \mathcal{B}$, then $V(B) \cap V(B') \neq \emptyset$.
	A \emph{cover} or \emph{hitting set} of $\mathcal{B}$ is a set $X \subseteq V(D)$ such that $X \cap V(B) \neq \emptyset$ for all $B \in \mathcal{B}$. The \emph{order} of $\mathcal{B}$ is the minimum size of a cover of $\mathcal{B}$. The \emph{bramble number} of $D$, denoted $\bn(D)$, is the maximum order of any bramble in $D$.

Note that this concept is often called a \emph{strong bramble} in the literature in contrast to a \emph{(weak) bramble}~\cite{reed_introducing_1999} that does not demand the elements to overlap but also allows them to be connected by edges in both directions.
This weaker notion, however, is not closed under taking butterfly minors, as can be seen, in~\cref{fig:bramble}.
Thus, we work with strong brambles.

\begin{figure}[!htb]
	\centering
	\begin{subfigure}[t]{.45\linewidth}
	\centering
	\begin{tikzpicture}
		[
		state/.style={circle, draw=red, thick, minimum size=4mm, font=\small,inner sep=0pt},
		stateb/.style={circle, draw=blue, thick, minimum size=4mm, font=\small,inner sep=0pt},
		blank/.style={circle, ultra thin, minimum size=10mm},
		bubble/.style={circle, draw=cyan, thick, fill=cyan!40, opacity=0.2, minimum size=5mm, font=\small,inner sep=26pt},
		> = latex, 
		shorten > = 1pt, 
		auto,
		node distance = 1.5cm, 
		line width= 0.3mm
		,scale=.9,transform shape
		]
		
		\node[state] (1) {$1$};
		\node[state] (2) [below left=0.7cm and 1cm of 1] {$2$};
		\node[state] (3) [below=1.5cm of 1] {$3$};
		\node[stateb] (4) [right=2cm of 1]{$4$};
		\node[stateb] (5) [below right=0.7cm and 1cm of 4] {$5$};
		\node[stateb] (6) [below=1.5cm of 4] {$6$};
		
		\node[blank,red] (c1) [below left=0.5cm and 0cm of 1] {$C_1$};
		\node[blank,blue] (c2) [below right=0.5cm and 0cm of 4] {$C_2$};
	
		\path[->,red] (1) edge node {} (2);
		\path[->,red] (2) edge node {} (3);
		\path[->,red] (3) edge node {} (1);
		\path[->,blue] (4) edge node {} (5);
		\path[->,blue] (5) edge node {} (6);
		\path[->,blue] (6) edge node {} (4);
		\path[->] (1) edge node {} (4);
		\path[->] (6) edge node {} (3);
		
	
	\end{tikzpicture}
	\subcaption{A digraph $D$.}
	\label{fig:bramble1}
\end{subfigure}%
\hfill
\begin{subfigure}[t]{.45\linewidth}
	\centering
	\begin{tikzpicture}
		[
		state/.style={circle, draw=black, thick, minimum size=4mm, font=\small,inner sep=0pt},
		stater/.style={circle, draw=red, thick, minimum size=4mm, font=\small,inner sep=0pt},
		stateb/.style={circle, draw=blue, thick, minimum size=4mm, font=\small,inner sep=0pt},
		state2/.style={circle, draw=black, thick, minimum size=3.5mm, font=\small,inner sep=0pt},
		state2r/.style={circle, draw=red, thick, minimum size=3.5mm, font=\small,inner sep=0pt},
		state2b/.style={circle, draw=blue, thick, minimum size=3.5mm, font=\small,inner sep=0pt},
		blank/.style={circle, ultra thin, minimum size=10mm},
		bubble/.style={circle, draw=blue, thick, fill=cyan!40, opacity=0.2, minimum size=5mm, font=\small,inner sep=26pt},
		> = latex, 
		shorten > = 1pt, 
		auto,
		node distance = 1.5cm, 
		line width= 0.3mm
		,scale=.9,transform shape
		]
		
		\node[stater] (1) {$1$};
		\node[state2r] (1') [below left=0.18cm and 0.33cm of 1] {$a$};
		\node[stater] (2) [below left=0.7cm and 1cm of 1] {$2$};
		\node[state2r] (2') [above left=0.18cm and 0.33cm of 3] {$b$};
		\node[stater] (3) [below=1.5cm of 1] {$3$};
		
		\node[state2] (a) [right=0.8cm of 1]{$c$};
		\node[state2] (b) [right=0.8cm of 3]{$f$};
		
		\node[stateb] (4) [right=2cm of 1]{$4$};
		\node[state2b] (4') [below right=0.18cm and 0.33cm of 4] {$d$};
		\node[stateb] (5) [below right=0.7cm and 1cm of 4] {$5$};
		\node[state2b] (5') [above right=0.18cm and 0.33cm of 6] {$e$};
		\node[stateb] (6) [below=1.5cm of 4] {$6$};
		
		\path[->,red] (1) edge node {} (1');
		\path[->,red] (1') edge node {} (2);
		\path[->,red] (2) edge node {} (2');
		\path[->,red] (2') edge node {} (3);
		\path[->,red] (3) edge node {} (1);
		
		\path[->,blue] (4) edge node {} (4');
		\path[->,blue] (4') edge node {} (5);
		\path[->,blue] (5) edge node {} (5');
		\path[->,blue] (5') edge node {} (6);
		\path[->,blue] (6) edge node {} (4);
		
		\path[->] (1) edge node {} (a);
		\path[->] (a) edge node {} (4);
		\path[->] (6) edge node {} (b);
		\path[->] (b) edge node {} (3);
		
		\node[blank,red] (c1) [below left=0.5cm and 0cm of 1] {$C_1'$};
		\node[blank,blue] (c2) [below right=0.5cm and 0cm of 4] {$C_2'$};

	\end{tikzpicture}
	\subcaption{A digraph $D'$.}
	\label{fig:bramble2}
\end{subfigure}%
	\caption{The digraph $D$ on the left is a butterfly minor of the digraph $D'$ to the right.
        However, $D$ contains a weak bramble of order $2$ while $D'$ has no weak bramble of order $2$.}
	\label{fig:bramble}
\end{figure}
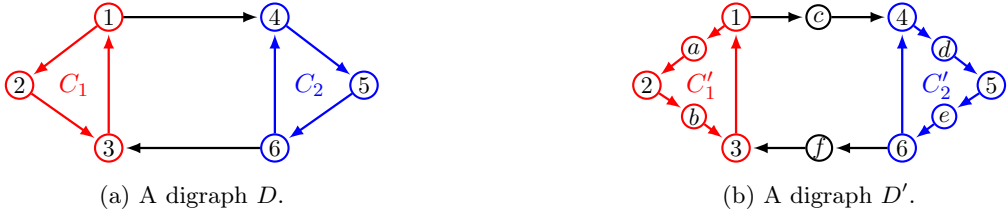

We know about the following relations of brambles to directed \treewidth.
It is important to note that the proofs of both \cref{lem:lemma_bramble_k_linked_set,lem:haven_to_bramble} indeed yield \textsl{strong} brambles.

\begin{lemma} [\cite{reed_introducing_1999}] \label{lem:lemma_bramble_k_linked_set} 
	Let $D$ be a digraph. If $D$ contains a $k$-linked set, it contains a bramble of order $k+1$.
\end{lemma}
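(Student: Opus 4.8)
The plan is to build a strong bramble explicitly from the $k$-linked set $W$ and then bound its order from below. First I would reformulate the hypothesis into a usable form: saying that $W$ is $k$-linked means $D$ has no balanced $W$-separator of order (at most) $k$, and since enlarging a separator only refines the strong components of what remains and therefore preserves being balanced, the absence of a balanced separator of order $k$ already rules out every vertex set of size at most $k$. Concretely, for each $X \subseteq V(D)$ with $|X| \le k$ there is a strong component of $D - X$ meeting $W$ in strictly more than $|W|/2$ vertices; this component is moreover unique, because two disjoint components cannot both contain a strict majority of $W$.

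With this in hand I would take the bramble to be $\mathcal{B} = \{\, B : B \text{ is a strongly connected subgraph of } D \text{ with } |V(B) \cap W| > |W|/2 \,\}$, equivalently the family of these majority components ranging over all small $X$. Each member is strongly connected by definition, so it remains only to verify the two bramble properties. Pairwise intersection is a one-line pigeonhole argument: any two members each meet more than half of $W$ and hence share a vertex of $W$. I would emphasise that this is exactly what makes $\mathcal{B}$ a \emph{strong} bramble rather than a weak one, matching the remark preceding the statement and the claim that the construction yields strong brambles.

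The substance of the proof is the lower bound on the order, which I would handle by contradiction. Suppose some cover $X$ of $\mathcal{B}$ had $|X| \le k$. By the reformulated hypothesis, $X$ is not a balanced $W$-separator, so $D - X$ has a strong component $C$ with $|V(C) \cap W| > |W|/2$; then $C \in \mathcal{B}$, yet $V(C) \cap X = \emptyset$ since $C$ is a component of $D - X$, contradicting that $X$ hits every member of $\mathcal{B}$. Hence every cover has size at least $k+1$, so $\mathcal{B}$ has order at least $k+1$, as required. The only genuinely delicate point I anticipate is the off-by-one in the meaning of ``$k$-linked'' together with the degenerate cases (for instance when $|V(D)| \le k$); getting the step ``enlarging a separator keeps it balanced'' exactly right is what lets the contradiction close cleanly.
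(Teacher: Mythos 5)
Your proof is correct and is essentially the standard argument for this classical result, which the paper only cites from Reed and does not prove itself: the family of strongly connected subgraphs meeting $W$ in more than $|W|/2$ vertices is pairwise intersecting by pigeonhole (hence a \emph{strong} bramble, exactly the property the paper emphasises right before the statement), and any cover $X$ with $|X| \leq k$ cannot be a balanced $W$-separator, so some strong component of $D-X$ is a bramble element disjoint from $X$, a contradiction. Your reading of ``$k$-linked'' as excluding balanced $W$-separators of order \emph{at most} $k$ (justified by your observation that enlarging a separator preserves balance) is the intended one and matches the paper's own phrasing in \cref{lem:lemma_balanced_w_separator}.
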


\begin{lemma} [\cite{safari_d-width_2005}] \label{lem:lemma_bramble_haven}
	Let $D$ be a digraph. If $D$ contains a bramble of order $k$, it contains a haven of order $k$.
\end{lemma}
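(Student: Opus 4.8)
The plan is to build the haven directly from the bramble. For a set $X \subseteq \V{D}$ with $|X| < k$, the order of $\mathcal{B}$ being $k$ means every cover has size at least $k$, so $X$ is too small to be a cover; hence there is at least one $B \in \mathcal{B}$ with $\V{B} \cap X = \emptyset$. I would define $h(X)$ to be the vertex set of the strong component of $D - X$ containing such a $B$, and the work then lies in checking that this is well defined and monotone.

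First I would establish the key claim that all bramble elements avoiding $X$ lie in a single strong component of $D - X$. Given $B, B' \in \mathcal{B}$ both disjoint from $X$, the strong bramble condition provides a vertex in $\V{B} \cap \V{B'}$. Since $B$ and $B'$ are each strongly connected and share a vertex, their union is again strongly connected, and it avoids $X$; hence it lies entirely within one strong component of $D - X$, placing $B$ and $B'$ in the same component. Consequently $h(X)$ does not depend on the choice of $B$ and is well defined. This is exactly where I rely on the \emph{strong} bramble property, i.e.\ pairwise vertex intersection, rather than mere connection by edges.

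It then remains to verify monotonicity: if $Y \subseteq X$ with $|X| < k$, then $h(X) \subseteq h(Y)$. I would fix a bramble element $B$ disjoint from $X$; since $Y \subseteq X$ it is also disjoint from $Y$, so $\V{B} \subseteq h(X)$ and $\V{B} \subseteq h(Y)$. Because $D - X$ is a subgraph of $D - Y$, the vertex set $h(X)$ still induces a strongly connected subgraph of $D - Y$, and it meets $h(Y)$ in the vertices of $B$. As $h(Y)$ is a \emph{maximal} strongly connected vertex set in $D - Y$ and $h(X)$ is strongly connected there and intersects $h(Y)$, maximality forces $h(X) \subseteq h(Y)$, as required.

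I expect the main obstacle to be the well-definedness claim rather than the monotonicity, since it is precisely the point where the strong bramble definition (as opposed to the weak variant discussed above) is essential: two elements merely joined by edges in both directions need not fuse into a single strongly connected piece avoiding $X$. Once that claim is in hand, monotonicity is the routine observation that deleting fewer vertices can only merge strong components, anchored by a common bramble element.
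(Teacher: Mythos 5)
Your proposal is correct, and it is essentially the standard argument behind this lemma: the paper itself gives no proof (the result is imported from Safari~\cite{safari_d-width_2005}), and the proof there is exactly your construction --- send each $X$ with $|X|<k$ to the strong component of $D-X$ containing the bramble elements disjoint from $X$, using the pairwise-intersection (strong bramble) property for well-definedness and the fact that deleting fewer vertices only merges strong components for monotonicity. Both of your verification steps (the union of two intersecting strongly connected subgraphs is strongly connected; maximality of the strong component $h(Y)$ forces $h(X)\subseteq h(Y)$) are sound as written.
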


\begin{lemma} [\cite{safari_d-width_2005}]
    \label{lem:haven_to_bramble}
	Let $D$ be a digraph. If $D$ contains a haven of order $k+1$, it contains a bramble of order greater than $\frac{k}{2}$.
\end{lemma}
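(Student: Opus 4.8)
The plan is to construct the strong bramble directly from the haven $h$, exploiting two features: the nestedness property guarantees that haven values over small sets overlap, and the fact that a haven value is a strong component of $D$ minus its defining set forces any cover to be large. Concretely, I would set $m \coloneqq \lfloor k/2 \rfloor$ and define
\[
  \mathcal{B} \coloneqq \{\, D[h(X)] : X \subseteq V(D),\ |X| \le m \,\}.
\]
Since $|X| \le m \le k < k+1$, the value $h(X)$ is defined for every such $X$, and because $h(X)$ is the vertex set of a strong component of $D-X$, each $D[h(X)]$ is strongly connected and hence an admissible bramble element.

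Next I would verify the intersection condition. Given $X, Y \subseteq V(D)$ with $|X|, |Y| \le m$, I consider $Z \coloneqq X \cup Y$, so that $|Z| \le 2m \le k < k+1$ and $h(Z)$ is defined. From $X \subseteq Z$ and $Y \subseteq Z$, the nestedness of the haven yields $h(Z) \subseteq h(X)$ and $h(Z) \subseteq h(Y)$; as $h(Z)$ is nonempty (being a strong component), we get $V(D[h(X)]) \cap V(D[h(Y)]) \supseteq h(Z) \neq \emptyset$. Thus $\mathcal{B}$ is a strong bramble.

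Finally I would bound the order from below. Suppose some cover $C$ of $\mathcal{B}$ satisfies $|C| \le m$. Then $C$ is itself one of the admissible defining sets, so $D[h(C)] \in \mathcal{B}$ and $C$ must meet $h(C)$. But $h(C)$ is a strong component of $D-C$, whence $h(C) \cap C = \emptyset$, a contradiction. Therefore every cover has size at least $m+1 = \lfloor k/2 \rfloor + 1$, so the order of $\mathcal{B}$ is at least $\lfloor k/2 \rfloor + 1 > \frac{k}{2}$, as required.

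The argument is short, and the only delicate point is the bookkeeping of the size threshold: I must take $m$ as large as possible while keeping $X \cup Y$ inside the domain $[V(D)]^{<k+1}$ of the haven (which forces $2m \le k$, i.e.\ $m = \lfloor k/2 \rfloor$), and then check that the resulting bound $m+1$ is strictly larger than $\frac{k}{2}$ in both the even and odd cases. This rounding is exactly where the factor-of-two loss relative to the undirected setting — and the strict inequality in the statement — originate.
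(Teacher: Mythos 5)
Your proof is correct, and it is the standard haven-to-bramble construction that the paper implicitly relies on: the paper itself states \cref{lem:haven_to_bramble} only as a citation to \cite{safari_d-width_2005}, and your argument (taking all $D[h(X)]$ for $|X| \le \lfloor k/2 \rfloor$, using nestedness on $h(X \cup Y)$ for intersection and $h(C) \cap C = \emptyset$ for the order bound) is exactly that argument. It also confirms the paper's remark following the lemma, since the bramble elements pairwise share vertices and hence form a \emph{strong} bramble.
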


\begin{corollary} \label{cor:bramble_dtw}
	Let $D$ be a digraph and $k \geq  1$.
	\begin{enumerate}
		\item If $D$ contains a bramble of order $k+1$, then $\dtw{D} \geq k$.
		\item If $\dtw{D} > 3k+1$, then $D$ contains a bramble of order $k+1$.
	\end{enumerate}
\end{corollary}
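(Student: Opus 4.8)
The plan is to derive both statements by chaining together the obstruction lemmas established above; no new combinatorial construction is required, so the only care needed is in tracking the orders and reading one dichotomy in contrapositive form.

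For the first part, I would start from the hypothesised bramble of order $k+1$ and apply \cref{lem:lemma_bramble_haven} with parameter $k+1$, which produces a haven of order $k+1$ in $D$. Feeding this haven into the second item of \cref{thm:theorem_haven} then yields $\dtw{D} \geq k$ directly. This is a two-step composition with matching indices, so the bound follows at once.

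For the second part, I would invoke the dichotomy in \cref{thm:theorem_k_linked_set}: every digraph either satisfies $\dtw{D} \leq 3k+1$ or contains a $k$-linked set. Since the hypothesis $\dtw{D} > 3k+1$ rules out the first alternative, $D$ must contain a $k$-linked set. Applying \cref{lem:lemma_bramble_k_linked_set} to this $k$-linked set then produces a bramble of order $k+1$, completing the proof. By the remark preceding the statement, the bramble obtained here is in fact a \textsl{strong} bramble, which is the notion we work with throughout.

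The main obstacle, such as it is, is purely bookkeeping: one has to match the off-by-one shifts between the orders of $k$-linked sets, brambles, and havens as stated in the individual lemmas, and to read \cref{thm:theorem_k_linked_set} as ``$\dtw{D} > 3k+1$ forces a $k$-linked set.'' There is no genuine difficulty beyond confirming that the parameters line up so that the same integer $k$ works throughout both items. Notably, \cref{lem:haven_to_bramble} is \emph{not} needed for either direction, since the bramble-to-$\dtw{D}$ direction routes through the haven obstruction while the reverse direction routes through $k$-linked sets.
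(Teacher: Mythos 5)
Your proof is correct and matches the argument the paper intends: the corollary is stated without an explicit proof precisely because it follows by the chaining you describe, namely \cref{lem:lemma_bramble_haven} plus the second part of \cref{thm:theorem_haven} for the first item, and the dichotomy of \cref{thm:theorem_k_linked_set} plus \cref{lem:lemma_bramble_k_linked_set} for the second. Your observations that \cref{lem:haven_to_bramble} is not needed (it would only yield order greater than $\frac{k}{2}$) and that the brambles obtained are strong are also consistent with the paper's remarks.
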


\section{Brambles are closed under butterfly minors}
\label{sec:brambles_closed}

As seen, containing a bramble of high order is equivalent to having high directed treewidth up to small functions.
In this section, we show that the property of containing a bramble of high order is closed under taking butterfly minors (\cref{thm:bn_thm}).
To do so, we introduce the concept of a \emph{major graph}, which can be considered the opposite of a minor.
	Let $D$, and $D'$ be digraphs.
	If $D' \preccurlyeq_b D$, then we call $D$ a \emph{major graph} of $D'$.
	If $D' \preccurlyeq_b D$ and $D' \not\preccurlyeq_b X$ holds for all $X \subsetneq D$, then we call $D$ a \emph{minimal major graph} of $D'$.

\begin{observation}\label{lem:minimalMajorModel}
	Let $D$, $D'$ be digraphs such that $D' \preccurlyeq_b D$. Then there is a minimal major graph $H$ of $D'$ such that $H \subseteq D$. Furthermore, if $S'$ is a subgraph of $D'$, then there is a minimal major graph $S$ of $S'$ such that $S \subseteq H$.
\end{observation}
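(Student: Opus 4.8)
The plan is to extract both minimal major graphs as subgraph-minimal members of suitable finite families, relying on the fact that the butterfly-minor relation $\preccurlyeq_b$ is transitive and contains the subgraph relation as a special case.

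For the first part, I would look at the family $\mathcal{F}$ of all subgraphs $X \subseteq D$ with $D' \preccurlyeq_b X$. By hypothesis $D \in \mathcal{F}$, so $\mathcal{F}$ is nonempty, and since $D$ is finite it is a finite family of subgraphs. I then choose $H \in \mathcal{F}$ that is minimal with respect to inclusion (say, one minimizing $|V(X)| + |E(X)|$). By construction $D' \preccurlyeq_b H$, so $H$ is a major graph of $D'$; and for every proper subgraph $X \subsetneq H$ we have $X \subsetneq H \subseteq D$, so in particular $X \subseteq D$, whence minimality of $H$ in $\mathcal{F}$ gives $D' \not\preccurlyeq_b X$. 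This is exactly the definition of $H$ being a minimal major graph of $D'$, and $H \subseteq D$ holds by construction.

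For the second part, the key observation is that $S' \preccurlyeq_b H$. Indeed, $S'$ is a subgraph of $D'$, so $S' \preccurlyeq_b D'$ (take the subgraph and contract nothing), and we already know $D' \preccurlyeq_b H$ from the first part; transitivity of $\preccurlyeq_b$ then yields $S' \preccurlyeq_b H$. If one prefers an explicit witness, restricting a butterfly model of $D'$ in $H$ to the vertices and edges of $S'$ and contracting each branching produces a butterfly model of $S'$ in $H$. With $S' \preccurlyeq_b H$ established, I would simply rerun the argument of the first part with $H$ in the role of $D$ and $S'$ in the role of $D'$: the family $\mathcal{G}$ of subgraphs $X \subseteq H$ with $S' \preccurlyeq_b X$ is finite and nonempty (it contains $H$), so it has a subgraph-minimal member $S$, which by the same reasoning is a minimal major graph of $S'$ satisfying $S \subseteq H$.

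The only substantive ingredient is the transitivity used to pass from $S' \subseteq D'$ to $S' \preccurlyeq_b H$; the rest is a routine appeal to finiteness to pick minimal elements. The one point to handle carefully is the bookkeeping in the minimality step—checking that every proper subgraph of the chosen graph still lies inside the ambient graph ($D$ in the first part, $H$ in the second), so that minimality within the family can legitimately be invoked.
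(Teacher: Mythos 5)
Your proof is correct, so the question is only how it relates to the paper's treatment. The paper states this as an Observation with no explicit proof; the only evidence of its intended argument is the constructive, model-based reading used later, in the proof of \cref{thm:bn_thm}, where the minimal major graph $B_i$ of a bramble element and a model $\mu_i$ of it are said to be ``obtained from $\mu(D')$ by following the proof of \cref{lem:minimalMajorModel}'', with the key consequence that $\mu_i(v') \subseteq \mu(v')$ for all $v'$. That implicit proof restricts the given butterfly model $\mu$ of $D'$ in $H$ to the subgraph $S'$ and prunes it (unused leaves of the branchings, unused edges) down to a minimal major graph, so the resulting model is by construction nested inside $\mu$. You instead extract $H$ and $S$ abstractly, as inclusion-minimal members of the finite families $\mathcal{F}$ and $\mathcal{G}$. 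This is sound for the Observation as stated, and the one substantive step --- that $S' \subseteq D'$ and $D' \preccurlyeq_b H$ yield $S' \preccurlyeq_b H$ --- you discharge correctly by restricting the model of $D'$ in $H$ to $V(S')$ and $E(S')$; that is in fact the safer route, since transitivity of $\preccurlyeq_b$, if cited as a black box, itself requires an argument of exactly this kind (commuting deletions past butterfly contractions). The trade-off: your nonconstructive choice of an inclusion-minimal $S \subseteq H$ with $S' \preccurlyeq_b S$ need not admit a model nested inside $\mu$ --- it could sit on entirely different vertices of $H$ --- so it proves the statement in question but does not by itself deliver the nested-model property $\mu_i(v') \subseteq \mu(v')$ that the paper's later proof of \cref{thm:bn_thm} extracts from ``the proof of'' this Observation; the model-pruning argument buys exactly that extra structure.
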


We next show that no two non-trivial strong components can be combined into one by butterfly contraction.

\begin{lemma} \label{lem:butterfly_contracting_edge_between_sc}
	Let $D$ be a digraph with at least two strong components and $D'$ be a digraph obtained from $D$ by contracting a butterfly contractible edge with endpoints in different strong components of $D$.
	Then, for every strong component $C'$ of $D'$, there is a strong component $C$ of $D$ such that $C'$ is isomorphic to $C$.
\end{lemma}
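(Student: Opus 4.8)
The plan is to reduce to a single configuration and then analyse the strong components of $D'$ one at a time. First I would exploit the symmetry between the two ways an edge can be butterfly contractible: reversing all edges of $D$ preserves strong components (as vertex sets) and isomorphism types, turns the contractible edge $(u,v)$ into $(v,u)$, and interchanges the conditions $\Outdeg{}{u}=1$ and $\Indeg{}{v}=1$. Hence I may assume without loss of generality that $\Outdeg{}{u}=1$, so that the only out-edge of $u$ is $(u,v)$. Write $C_v$ for the strong component of $D$ containing $v$ and let $x$ be the vertex of $D'$ created by the contraction. Since $u$ and $v$ lie in different strong components, there is no path from $v$ to $u$ in $D$; equivalently, no closed walk of $D$ contains both $u$ and $v$. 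This single fact is what drives the whole argument.

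Now fix a strong component $C'$ of $D'$ and split into two cases. If $x \notin V(C')$, then $C'$ uses none of the edges altered by the contraction, so $D'[V(C')] = D[V(C')]$; this subgraph is strongly connected in $D$, and I would check it is maximal using the fact that closed walks transfer between $D$ and $D'$ (projecting along, or lifting across, the edge $(u,v)$), so that any strictly larger strongly connected vertex set in $D$ would project to a strongly connected set in $D'$ that strictly contains $V(C')$ or forces $x$ into its strong component, either way contradicting the maximality of $C'$. Thus $C'$ is itself a strong component of $D$ and this case is settled.

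The interesting case is $x \in V(C')$, where I claim $C'$ is isomorphic to $C_v$ via the map $\phi$ that sends $v \mapsto x$ and fixes every other vertex. I would first pin down the vertex set: if $y \neq x$ lies in $V(C')$, then there is a closed walk through $x$ and $y$ in $D'$, and \cref{lem:walkInMinor} (in the case $\Outdeg{}{u}=1$) produces a closed walk in $D$ through $v$ and $y$, whence $y \in V(C_v)$; conversely $\phi$ carries $C_v$ to a strongly connected subgraph of $D'$ containing $x$, which therefore lies inside $C'$. This gives $V(C') = \phi(V(C_v))$, and it then remains to match the edges, that is, to show $D'[V(C')] \cong C_v$ under $\phi$.

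The main obstacle is exactly this edge-matching step, because $x$ inherits the in-neighbours of $u$ as well as those of $v$, so a priori $D'[V(C')]$ could carry spurious edges not present in $C_v$. Here the key fact rescues us: no in-neighbour of $u$ lies in $C_v$. Indeed, an edge $(w,u)$ with $w \in V(C_v)$ would combine with the edge $(u,v)$ and a path $v \rightsquigarrow w$ inside $C_v$ to form a closed walk through both $u$ and $v$, which is impossible. Together with $\Outdeg{}{u}=1$, which forces the out-neighbours of $x$ to be exactly those of $v$, this shows that the edges of $D'[V(C')]$ incident with $x$ are precisely the $\phi$-images of the edges of $C_v$ incident with $v$, while all remaining edges are unchanged. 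Hence $\phi$ is an isomorphism from $C_v$ onto $C' = D'[V(C')]$, which completes the main case and the proof.
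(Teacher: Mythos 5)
Your proof is correct and takes essentially the same route as the paper's: reduce to the case $\Outdeg{D}{u}=1$, use \cref{lem:walkInMinor} to show that every vertex of the strong component of $D'$ containing the contraction vertex $x$ lifts to a vertex of $C_v$, and use the observation that no in-neighbour of $u$ can lie in $C_v$ (else a closed walk through $u$ and $v$ would exist) to conclude that $v \mapsto x$ gives an isomorphism onto $C_v$. If anything, your explicit edge-matching step spells out a point the paper only asserts, namely that $C_v$ and $D'[(V(C_v)-\{v\})\cup\{x\}]$ are isomorphic.
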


\begin{proof}
    Let $(u,v)$ be the edge contracted to obtain $D'$ from $D$ and let $C_u$ and $C_v$ be the distinct strong components of $D$ such that $u \in V(C_u)$ and $v \in V(C_v)$.
	  Assume that $\Outdeg{D}{u} = 1$. (The other case, i.e.~$\Indeg{D}{v} = 1$, is analogous.)
    
	If $|V(C_u)| \geq 2$, then we have $\Outdeg{C_u}{u} \geq 1$, and hence $\Outdeg{D}{u} \geq 2$, a contradiction.
	Therefore, we have $|V(C_u)| = 1$.
    There is nothing to show if there is no edge $e'$ such that $u$ is incident with $e'$ and $e' \neq (u,v)$. Therefore, we assume at least one such edge $e'$ exists. 
	Then $u$ is the head of every such edge $e'$, and the tail of $e'$ lies in a strong component $\hat{C}$ of $D$ with $C_v \neq \hat{C} \neq C_u$.
	
	Suppose there is a strong component $C'$ of $D'$ such that there is no strong component $C$ of $D$ such that $C'$ is isomorphic to $C$.
    Then, $C'$ is obtained by contracting $(u,v)$ into a vertex $x$.
    Since $C_v$ and $D'[(V(C_v) - \{v\}) \cup \{x\}]$ are isomorphic and strongly connected, we have $(V(C_v) - \{v\}) \cup \{x\} \subsetneq V(C')$, and there is at least one vertex $w \in V(C') - ((V(C_v) - \{v\}) \cup \{x\})$.
	Then there is an edge $(w,x) \in V(C')$, and we have $w \in \hat{C}$, where $\hat{C}$ is a strong component of $D$ with $C_v \neq \hat{C} \neq C_u$.
	Since $C'$ is a strong component of $D'$, there is a closed walk $W'$ in $D'$ containing $x$ and $w$.
    Then, by~\cref{lem:walkInMinor}, there is a closed walk in $D$ that contains $v$ and $w$, which is a contradiction. 
\end{proof}

The following lemma was implicitly used in \cite{amiri_erdos-posa_2016}.

\begin{lemma}[\cite{amiri_erdos-posa_2016}] \label{lem:sc_minimalMajorModel}
	Let $D$, $D'$ be digraphs such that $D' \preccurlyeq_b D$.
	If $D'$ is strongly connected, and $D$ is a minimal major graph of $D'$, then $D$ is also strongly connected.
\end{lemma}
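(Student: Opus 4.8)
The plan is to fix a butterfly model $\mu$ of $D'$ in $D$ and to exploit the minimality of $D$ to argue that this model is as \emph{tight} as possible: every vertex of $D$ lies in some branching $\mu(x)$, every edge is either a branching edge or a model edge $\mu(e)$, and — crucially — every leaf of a branching is forced to be an endpoint of a model edge. Granting this, I would show that all branching roots $r_x$ (for $x \in V(D')$) lie in a single strong component $C$ of $D$, and then that every remaining vertex of $D$ also lies in $C$, whence $D$ is strongly connected.

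First I would record the consequences of minimality. If some vertex or edge of $D$ were not used by $\mu$ (not a branching vertex, not a branching edge, and not a model edge), then the subgraph $H_\mu \subseteq D$ spanned by the model would already be a proper subgraph of $D$ admitting $\mu$, contradicting that $D$ is a minimal major graph; hence $V(D) = \bigcup_x V(\mu(x))$ and every edge is a branching edge or a model edge. A slightly more careful pruning argument gives the key claim: if $\ell$ is a leaf of the out-branching $T_o^x$ of some $\mu(x)$ and $\ell$ is the tail of no model edge, then deleting $\ell$ together with its parent edge still yields a valid model (the remnant is an out-branching with the same root, and no model-edge incidence is destroyed) living on a proper subgraph — again contradicting minimality. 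So every leaf of $T_o^x$ is the tail of some $\mu(e)$ with $e=(x,y)$, and the symmetric statement holds for in-branchings and heads of model edges.

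Next I would lift strong connectivity through the model. For each $e=(x,y) \in E(D')$ the edge $\mu(e)$ has its tail in $T_o^x$ and head in $T_i^y$; following $T_o^x$ from $r_x$ to that tail, crossing $\mu(e)$, and following $T_i^y$ up to $r_y$ produces a directed path $r_x \to r_y$ in $D$. As $D'$ is strongly connected, concatenating such paths along a walk in $D'$ shows all roots are mutually reachable, so they lie in one strong component $C$. Finally, take an arbitrary $a \in V(D)$, say $a \in \mu(x)$. If $a \in T_o^x$ then $r_x \to a$ inside $T_o^x$, and descending $T_o^x$ to a descendant leaf $\ell$ (which by the key claim is the tail of some $\mu(e)$, $e=(x,y)$) gives $a \to \ell \to r_y \in C$; since $r_x,r_y \in C$, this closes a walk through $a$, so $a \in C$. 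The case $a \in T_i^x$ is symmetric, using a descendant leaf of $a$ in $T_i^x$ that is the head of a model edge. As every vertex lies in some branching, $V(D) = V(C)$.

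The main obstacle I anticipate is the pruning/minimality step: one must verify carefully that deleting an unused leaf really leaves a valid butterfly model and that the deleted vertex is genuinely absent from the pruned model's spanning subgraph, so that minimality is actually contradicted. The rest is routine bookkeeping, but it does require tracking which branching each vertex belongs to, using that the two branchings of a given $\mu(x)$ meet only at their common root $r_x$ (so leaves other than the root lie in exactly one of them), and that every vertex of a finite branching has a descendant leaf; this is where the care is needed.
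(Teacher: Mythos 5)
The paper does not actually prove \cref{lem:sc_minimalMajorModel}; it only states it as implicitly used in \cite{amiri_erdos-posa_2016}, so there is no in-paper argument to compare against. Judged on its own, your proof is correct and self-contained. The three pillars all hold: (i) minimality forces the model to span $D$ exactly (otherwise the union of the branchings $\mu(x)$ and the model edges $\mu(e)$ would be a proper subgraph still admitting $D'$ as a butterfly minor); (ii) the pruning claim is sound, because a non-root leaf $\ell$ of $T_o^x$ lies in no other branching, is not the head of any model edge (heads of model edges into $\mu(x)$ land in $T_i^x$, which meets $T_o^x$ only in the root), and so, if it is also not the tail of a model edge, its only incident edge in $D$ is its parent edge and deletion leaves a valid model on a proper subgraph; (iii) the root-to-root paths through $T_o^x$, $\mu(e)$, $T_i^y$ put all roots in one strong component $C$, and the descendant-leaf argument then pulls every non-root vertex into $C$ as well. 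The caveat you flagged is real but harmless in exactly the way you anticipate: the pruning claim only makes sense for leaves other than the root (the root has no parent edge and may be a leaf of both branchings), yet in the final step you only ever invoke it for descendant leaves of a non-root vertex $a$, which are automatically non-root, while roots themselves are covered by the root-to-root paths. Even the degenerate case $\lvert V(D')\rvert = 1$ comes out correctly: there are no model edges, so pruning collapses both branchings to the single root, and $D$ is one vertex. In short, you have supplied a complete proof of a statement the paper delegates to the literature.
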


This allows us to establish that the bramble number is closed under taking butterfly minors.

\begin{theorem}\label{thm:bn_thm}
	Let $D$, $D'$ be digraphs such that $D' \preccurlyeq_b D$.
    Then $\bn(D') \leq \bn(D)$.
\end{theorem}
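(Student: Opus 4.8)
The plan is to lift a bramble of maximum order from $D'$ back to $D$ along a butterfly model, losing nothing in the order. Fix a butterfly model $\mu$ of $D'$ in $D$, and recall that the subgraphs $\mu(v)$, $v \in \V{D'}$, are pairwise vertex-disjoint; write $r_v$ for the common root of the in-branching and out-branching making up $\mu(v)$. Let $\mathcal{B}'$ be a bramble in $D'$ of order $\bn(D')$. For each $B' \in \mathcal{B}'$ I would consider its \emph{expansion} $\mu(B') \coloneqq \bigcup_{v \in \V{B'}} \mu(v) \cup \bigcup_{e \in \E{B'}} \mu(e)$ as a subgraph of $D$, and inside it single out a strongly connected subgraph $\hat{B}$ that passes through $r_v$ for every $v \in \V{B'}$. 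The candidate bramble in $D$ is then $\mathcal{B} \coloneqq \Set{\hat{B} \mid B' \in \mathcal{B}'}$.

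The main obstacle is the step of producing $\hat{B}$, because $\mu(v)$ on its own is only the union of an in- and an out-branching and hence is \emph{not} strongly connected; strong connectivity must instead come from stitching the pieces together across the edges $\mu(e)$. Since $B'$ is strongly connected, I would fix a closed walk in $B'$ visiting all its vertices and lift it edge by edge: after crossing some $\mu(e)$ into the in-branching of $\mu(v)$ one ascends to $r_v$, then descends through the out-branching to the tail of the next $\mu(e')$. This yields a closed walk in $D$ through every root $r_v$, $v \in \V{B'}$, whose underlying subgraph is the desired strongly connected $\hat{B} \subseteq \mu(B')$ containing all the $r_v$; the degenerate case $\Abs{\V{B'}} = 1$ is handled by letting $\hat{B}$ be the single vertex $r_v$. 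Alternatively, one may obtain a strongly connected $\hat{B}$ directly from \cref{lem:minimalMajorModel} and \cref{lem:sc_minimalMajorModel} as a minimal major graph of $B'$ inside a fixed minimal major graph $H \subseteq D$ of $D'$, arranged to meet each $\mu(v)$ in its root.

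It then remains to verify that $\mathcal{B}$ is a bramble and that its order is at least that of $\mathcal{B}'$. For pairwise intersection: if $B_1', B_2' \in \mathcal{B}'$ share a vertex $v$, then by construction both $\hat{B}_1$ and $\hat{B}_2$ contain $r_v$, so they meet. For the order I would transfer covers in the opposite direction. Given any cover $X \subseteq \V{D}$ of $\mathcal{B}$, set $X' \coloneqq \Set{v \in \V{D'} \mid X \cap \V{\mu(v)} \neq \emptyset}$. Since the $\mu(v)$ are pairwise disjoint, each element of $X$ lies in at most one $\mu(v)$, so $\Abs{X'} \leq \Abs{X}$; and if $x \in X$ hits $\hat{B} \subseteq \mu(B')$, then $x$ lies in some $\mu(v)$ with $v \in \V{B'}$, whence $v \in X' \cap \V{B'}$, so $X'$ covers $\mathcal{B}'$. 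Therefore the order of $\mathcal{B}'$ is at most the order of $\mathcal{B}$, which is at most $\bn(D)$. As $\mathcal{B}'$ has order $\bn(D')$, this gives $\bn(D') \leq \bn(D)$, as required.
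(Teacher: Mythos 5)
Your proof is correct, and it follows the same overall strategy as the paper's: lift the bramble along a butterfly model, use the roots of the model trees as the guaranteed intersection points, and project covers of the lifted bramble back to $D'$ using the pairwise disjointness of the sets $\mu(v)$. Where you genuinely differ is in how the strongly connected lifts are produced. The paper works inside a minimal major graph $H \subseteq D$ of $D'$ (via \cref{lem:minimalMajorModel}), takes for each bramble element $B_i'$ a minimal major graph $B_i \subseteq H$ with a sub-model $\mu_i$, invokes \cref{lem:sc_minimalMajorModel} to get strong connectivity of $B_i$, and then needs a separate argument --- splitting off the case of single-vertex bramble elements and observing that both branchings of $\mu_i(v')$ have a leaf when $|V(B_i')| \geq 2$ --- to conclude that each $\mu_i(v')$ contains the root of $\mu(v')$, which is what makes the lifted elements pairwise intersect. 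Your walk-lifting construction (lift a closed walk of $B'$ visiting all its vertices, routing each visit of a vertex $v$ through the root $r_v$ by ascending the in-branching and descending the out-branching) obtains strong connectivity and root containment simultaneously and by construction, so you bypass the minimal-major-graph machinery, \cref{lem:sc_minimalMajorModel}, and the leaf-counting argument entirely; your degenerate case $|V(B')|=1$ is also handled inline rather than by a global reduction to $\bn(D')=1$. Your cover-transfer step is stated as a direct inequality (any cover $X$ of $\mathcal{B}$ projects to a cover $X'$ of $\mathcal{B}'$ with $|X'| \leq |X|$), whereas the paper phrases it as a contradiction about minimum covers, but the content is identical. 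The trade-off is that the paper's route reuses lemmas it has already established for other purposes, while yours is more elementary and self-contained for this particular theorem.
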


\begin{proof}
	By \cref{lem:minimalMajorModel}, there is a minimal major graph $H$ of $D'$ such that $H \subseteq D$.
    Let $\mu$ be a butterfly model of $D'$ in $H$.
	Let $\mathcal{B'} = \{B_1', \dots, B_n'\}$ be a bramble of $D'$ of maximum order and $C' = \{c_1', \dots, c_m'\}$ be a cover of $\mathcal{B'}$ of minimum size. Then we have $|C'| = \bn(D')$. For each $B_i' \in \mathcal{B'}$, we choose $B_i$ and $\mu_i$ such that $B_i$ is a minimal major graph of $B_i'$ with $B_i \subseteq H$, $\mu_i$ is a tree-like model of $B_i'$ in $B_i$, and $B_i$ and $\mu_i$ are obtained from $\mu(D')$ by following the proof of \cref{lem:minimalMajorModel}. Then by construction, we have $\mu_i(v') \subseteq \mu(v')$ for every $v' \in V(B_i')$.
	We define $\mathcal{B} \coloneqq \{B_1, \dots, B_n\}$. 
	
	We claim that $\mathcal{B}$ is a bramble of $D$.
    By \cref{lem:minimalMajorModel}, there exists a corresponding minimal major graph $B_i$ for each $B_i'$ with $B_i \subseteq H \subseteq D$.
    By \cref{lem:sc_minimalMajorModel}, $B_1, \dots, B_n$ are strongly connected subgraphs of $D$.
	Assume that there exists $B_i' \in \mathcal{B'}$ with $|B_i'| = 1$. Since $\mathcal{B'}$ is a bramble, $C' = \{v\}$ for $v \in B_i'$ is a cover of $\mathcal{B'}$. Hence, we have $\bn(D') = |C'| = 1$, and we can always find a bramble of order $1$ in $D$. Therefore, we assume $|B_i'| \geq 2$ for all $i \in \{1,\dots,n\}$. 
	
	We want to show that $V(B_k) \cap V(B_l) \neq \emptyset$ holds for all $B_k, B_l \in \mathcal{B}$. Let $B_k, B_l \in \mathcal{B}$. Since $V(B_k') \cap V(B_l') \neq \emptyset$ for all $B_k', B_l' \in \mathcal{B'}$, there is at least one vertex $v' \in V(B_k') \cap V(B_l')$.
	Since $|B_i'| \geq 2$ for all $i$, and $B_i'$ is strongly connected, every $x \in V(B_i')$ has at least one outgoing and one ingoing edge in $B_i'$, i.e.~both the in-branching and the out-branching of $\mu_i(x)$ contain at least one leaf.
	Therefore, by the choice of $B_i$ and $\mu_i$, for every $B_i'$ with $v' \in V(B_i')$, $\mu_i(v')$ contains the root of $\mu(v')$ where the in-branching and the out-branching meet. This implies that there is always a vertex $v \in V(\mu_k(v')) \cap V(\mu_l(v')) \subseteq V(B_k) \cap V(B_l)$.
	
	Due to the above argument, we can find a vertex $c_j \in V(D)$ for each $c_j' \in C'$ such that if $c_j'$ covers $B_{i_1}', \dots, B_{i_l}'$ for $1 \leq l \leq n$, then $c_j \in \bigcap_{1 \leq k \leq l}V(\mu_{i_k}(c_j')) \subseteq \bigcap_{1 \leq k \leq l}V(B_{i_k})$. We define $C \coloneqq \{c_1, \dots, c_m\}$, where for each $c_j' \in C'$, $c_j \in C$ is chosen as above. We claim that $C$ is a cover of $\mathcal{B}$ of minimum size.
	By construction, $C$ is a cover of $\mathcal{B}$. Towards a contradiction, suppose there is a cover $C''$ of $\mathcal{B}$ such that $|C''| < |C|$. Since $B_i$ is a minimal major graph of $B_i'$, every $v \in V(B_i)$ is contained in $\mu_i(v')$ for some $v' \in V(B_i')$. If $c \in C''$ covers $B_{i_1}, \dots, B_{i_l}$ for $1 \leq l < m$, i.e.~$c \in \bigcap_{1 \leq k \leq l}V(B_{i_k})$, then $c \in \bigcap_{1 \leq k \leq l} V(\mu_{i_k}(v'))$ for some $v' \in \bigcap_{1 \leq k \leq l}V(B_{i_k}')$ (by the choice of $B_i$ and $\mu_i$ for all $i$), i.e.~there is $v'$ that covers every $B_{i_1}', \dots, B_{i_l}'$. Therefore, we can find a cover of $\mathcal{B'}$ of size less than $|C'| = \bn(D')$, which is a contradiction.
\end{proof} 

\section{Definitions of directed tree decompositions}
\label{sec:dtw_definitions}

We provide a base definition containing the properties that all the different definitions capturing directed \treewidth we consider have in common.

\begin{definition} [Abstract Digraph Decomposition]	\label{def:abstract}
	An \emph{abstract digraph decomposition} of a digraph $D$ is a triple $\mathcal{T} \coloneqq $ $(T, \beta, \gamma)$, where $T$ is a rooted directed tree, $\beta : V(T) \rightarrow 2^{V(D)}$ and $\gamma : E(T) \rightarrow 2^{V(D)}$ such that $\bigcup \{\beta(t) : t \in V(T)\} = V(D)$.\\
	For every $t \in V(T)$, we define $\Gamma(t) \coloneqq \beta(t) \cup \bigcup_{e \sim t}\gamma(e)$ and $T_t \coloneqq T[\{ t' \in V(T) : t'$ is reachable from $t$ by a directed path in $T\}]$. Furthermore, for a subtree $S \subseteq T$, we define $\beta(S) \coloneqq \bigcup_{t \in V(S)}\beta(t)$.

    \vspace{3pt}
	The \emph{width} $w(\mathcal{T})$ of $\mathcal{T}$ is max$\{|\Gamma(t)| - 1 : t \in V(T)\}$.
	For $t \in V(T)$ and $e \in E(T)$, we call $\beta(t)$ a \emph{bag} and $\gamma(e)$ a \emph{guard}.
\end{definition}

Based on \cref{def:abstract}, we define five different versions of directed tree decompositions and directed tree-width that can be found in the literature.

\begin{definition}[\dtdNW \cite{johnson_directed_2001}]
	\label{def:1dtd}
	An \emph{\dtdNW} of a digraph $D$ is  an abstract digraph decomposition $\mathcal{T} \coloneqq (T, \beta, \gamma)$ such that
	\begin{enumerate}[leftmargin=1.5cm,labelindent=16pt,label= ($\dtwNW{}$\arabic*)]
		\item $\{\beta(t) : t \in V(T)\}$ is a partition of $V(D)$ into nonempty sets, and
		\item \label{D1.2}  for all $e = (s,t) \in E(T)$, $\beta(T_t) \subseteq V(D) - \gamma(e)$ and there is no walk in $D-\gamma(e)$ with first and last vertices in $\beta(T_t)$ that uses a vertex of $V(D)-(\beta(T_t) \cup \gamma(e))$.
	\end{enumerate}
	The \emph{\tdtwNW} of $D$, denoted by \dtwNW{D}, is min$\{w(\mathcal{T}) : \mathcal{T}$ is an \dtdNW\ of $D\}$.
\end{definition}
\dtwNW{} stands for `No Walk'. If \ref{D1.2} holds, the vertex set $\beta(T_t)$ is called \emph{$\gamma(e)$-normal}.

\begin{definition} [\dtdNCW \cite{giannopoulou_directed_2022}]
	\label{def:2dtd}
	An \emph{\dtdNCW} of a digraph $D$ is  an abstract digraph decomposition $\mathcal{T} \coloneqq (T, \beta, \gamma)$ such that
	\begin{enumerate}[leftmargin=1.7cm,labelindent=16pt,label= ($\dtwNCW{}$\arabic*)]
		\item $\{\beta(t) : t \in V(T)\}$ is a partition of $V(D)$ into nonempty sets, and
		\item \label{D2.2} for all $e = (s,t) \in E(T)$, there is no closed walk in $D-\gamma(e)$ containing a vertex of $\beta(T_t)$ and a vertex of $V(D)- \beta(T_t)$.
	\end{enumerate}
	The \emph{\tdtwNCW} of $D$, denoted by \dtwNCW{D}, is min$\{w(\mathcal{T}) : \mathcal{T}$ is an \dtdNCW\ of $D\}$.
\end{definition}
\dtwNCW{} stands for `No Closed Walk'. The above definition is slightly different from \dtdNWs. For some $e = (s,t) \in E(T)$, $\beta(T_t)$ may contain some vertices of $\gamma(e)$. Moreover, there may be an unclosed walk in $D - \gamma(e)$ with the first and last vertices in $\beta(T_t)$ that uses a vertex of $V(D)-(\beta(T_t) \cup \gamma(e))$. By simply allowing empty bags, we obtain the following definition.

\begin{definition} [\dtdNCWE \cite{giannopoulou_directed_2022}]
	\label{NCWE}
	An \emph{\dtdNCWE} of a digraph $D$ is  an abstract digraph decomposition $\mathcal{T} \coloneqq (T, \beta, \gamma)$ such that
	\begin{enumerate}[leftmargin=1.9cm,labelindent=16pt,label= ($\dtwNCWE{}$\arabic*)]
		\item\label{NCWE_1} $\{\beta(t) : t \in V(T)\}$ is a partition of $V(D)$ into possibly empty sets such that $\beta(r) \neq \emptyset$, where $r$ is the root of $T$, and
		\item\label{NCWE_2} for all $e = (s,t) \in E(T)$, there is no closed walk in $D-\gamma(e)$ containing a vertex of $\beta(T_t)$ and a vertex of $V(D)- \beta(T_t)$.
	\end{enumerate}
	The \emph{\tdtwNCWE} of $D$, denoted by \dtwNCWE{D}, is min$\{w(\mathcal{T}) : \mathcal{T}$ is an \dtdNCWE of $D\}$.
\end{definition}
\dtwNCWE{} stands for `No Closed Walk' with possibly empty bags. By definition, any \dtdNCW is an \dtdNCWE. 

\begin{definition} [\dtdSCE \cite{johnson_addendum_2001}]
	\label{def:3dtd}
	An \emph{\dtdSCE} of a digraph $D$ is  an abstract digraph decomposition $\mathcal{T} \coloneqq (T, \beta, \gamma)$ such that
	\begin{enumerate}[leftmargin=1.5cm,labelindent=16pt,label= ($\dtwSCE{}$\arabic*)]
		\item \label{D3.1} $\{\beta(t) : t \in V(T)\}$ is a partition of $V(D)$ into possibly empty sets such that $\beta(r) \neq \emptyset$, where $r$ is the root of $T$,
		\item \label{D3.2} for all $e = (s,t) \in E(T)$, $\beta(T_t)$ is the vertex set of a strong component of $D-\gamma(e)$, and
		\item \label{D3.3} $|V(T)| \leq |V(D)|^2$.
	\end{enumerate}
	The \emph{\tdtwSCE} of $D$, denoted by \dtwSCE{D}, is min$\{w(\mathcal{T}) : \mathcal{T}$ is an \dtdSCE\ of $D\}$.
\end{definition}
\dtwSCE{} stands for `Strong Component' with possibly empty bags.

\begin{definition} [\dtdSCD \cite{kreutzer_width-measures_2014}]
	\label{def:4dtd}
	An \emph{\dtdSCD} of a digraph $D$ is  an abstract digraph decomposition $\mathcal{T} \coloneqq (T, \beta, \gamma)$ such that
	\begin{enumerate}[leftmargin=1.6cm,labelindent=16pt,label= ($\dtwSCD{}$\arabic*)]
		\item \label{D1.4} $\{\beta(t) : t \in V(T)\}$ is a partition of $V(D)$ into nonempty sets,
		\item \label{D2.4} for all $e = (s,t) \in E(T)$, $\beta(T_t)$ is the vertex set of a strong component of $D-\gamma(e)$, and
		\item \label{D3.4} if $t \in V(T)$ and $t_1,...,t_l$ are the children of $t$ in $T$, then $\bigcup_{1 \leq i \leq l}\beta(T_{t_i}) \cap \bigcup_{e \sim t}\gamma(e) = \emptyset$.
	\end{enumerate}
	The \emph{\tdtwSCD} of $D$, denoted by \dtwSCD{D}, is min$\{w(\mathcal{T}) : \mathcal{T}$ is an \dtdSCD\ of $D\}$.
\end{definition}
\dtwSCD{} stands for `Strong Component' with $\bigcup_{1 \leq i \leq l}\beta(T_{t_i})$ and $\bigcup_{e \sim t}\gamma(e)$ being disjoint for each $t \in V(T)$ and its children $t_1, \dots, t_l$. Recall that if $\dtw{D} < k$, then $k$ cops have a winning strategy in the cops-and-robber game on $D$ (\cref{thm:c&r_game}). The following observation shows the differences between \dtdSCD and the other definitions.

\begin{observation}
     If $\dtwSCD{D} < k$, then $k$ cops have a robber-monotone winning strategy; otherwise, the winning strategy provided by \cref{thm:c&r_game} is not necessarily robber-monotone.
\end{observation}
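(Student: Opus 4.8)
The plan is to establish the first claim constructively---turning an \dtdSCD of width $<k$ into an explicit robber-monotone strategy for $k$ cops---and then to observe that the second claim holds because the single structural ingredient that forces monotonicity is property~\ref{D3.4}, which the four other definitions do not impose.

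For the strategy, fix an \dtdSCD $\mathcal{T}=(T,\beta,\gamma)$ of $D$ with $w(\mathcal{T})<k$, so that $|\Gamma(t)|\le k$ for every $t\in V(T)$. The cops descend $T$ from the root: they first occupy $\Gamma(r)$, and whenever they rest on $\Gamma(t)$ they move to the unique child $t'$ whose subtree still contains the robber and reoccupy $\Gamma(t')$; this uses at most $k$ cops. I would carry the invariant that, once the cops rest on $\Gamma(t)$, the robber space is a strong component $R_i$ of $D-\Gamma(t)$ with $R_i\subseteq\beta(T_t)$. The confinement here rests on property~\ref{D2.4}: for $e=(t,t')$ we have $\gamma(e)\subseteq\Gamma(t)\cap\Gamma(t')$ and $\beta(T_{t'})$ is a strong component of $D-\gamma(e)$, so a closed walk in $D-(\Gamma(t)\cap\Gamma(t'))$, or in $D-\Gamma(t)$, meeting both $\beta(T_{t'})$ and its complement would already exist in $D-\gamma(e)$ and join two distinct strong components---impossible. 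Consequently $R_i\subseteq\beta(T_t)\setminus\Gamma(t)$ is strongly connected and so lies in a single child subtree $\beta(T_{t'})$, and when the cops lift to $\Gamma(t)\cap\Gamma(t')$ the robber's new strong component $S$ of $D-(\Gamma(t)\cap\Gamma(t'))$ stays inside $\beta(T_{t'})$.

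The crux---and the step I expect to be the main obstacle---is showing $R_{i+1}\subseteq R_i$, and this is exactly where property~\ref{D3.4} enters. Since $\beta(T_{t'})\subseteq\bigcup_j\beta(T_{t_j})$ is disjoint from every guard incident with $t$ by~\ref{D3.4}, and $\beta(t)\cap\beta(T_{t'})=\emptyset$ by the partition property, we obtain $\Gamma(t)\cap\beta(T_{t'})=\emptyset$. Hence the graphs $D[\beta(T_{t'})]$, $(D-\Gamma(t))[\beta(T_{t'})]$ and $(D-(\Gamma(t)\cap\Gamma(t')))[\beta(T_{t'})]$ coincide, so $R_i$ and $S$ are both strong components of the single graph $D[\beta(T_{t'})]$; as $R_i\subseteq S$ they are equal. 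Because $R_{i+1}$ is the strong component of the further subgraph $D-\Gamma(t')$ that lies in $S=R_i$, we conclude $R_{i+1}\subseteq R_i$, i.e.\ the play is robber-monotone. Termination is immediate: the cops strictly descend a finite tree, and at a leaf $t'$ we have $\beta(T_{t'})=\beta(t')\subseteq\Gamma(t')$, so the confined space $S\subseteq\beta(T_{t'})$ is fully occupied and the robber is caught.

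For the second claim I would note that, for the other four definitions, the descending strategy is precisely the one produced by \cref{thm:c&r_game}(1), but the identity $\Gamma(t)\cap\beta(T_{t'})=\emptyset$ that drove the monotonicity step can fail without~\ref{D3.4}: a guard incident with $t$ may lie inside $\beta(T_{t'})$, so lifting the cops to $\Gamma(t)\cap\Gamma(t')$ frees a vertex the robber can re-enter, enlarging the robber space. To make this negative statement formal it suffices to exhibit a small digraph together with, say, an \dtdNW whose induced play satisfies $R_{i+1}\not\subseteq R_i$.
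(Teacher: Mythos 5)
The paper states this observation \emph{without} proof, so there is no in-paper argument to compare yours against; your proposal in effect supplies the missing one. Your positive half is correct and is the natural JRST-style descent argument: the containments $\gamma((t,t'))\subseteq\Gamma(t)\cap\Gamma(t')$ and $R_i\subseteq\beta(T_{t'})$, the confinement of the intermediate component $S$ of $D-(\Gamma(t)\cap\Gamma(t'))$ inside $\beta(T_{t'})$ via \ref{D2.4}, and the identity $\Gamma(t)\cap\beta(T_{t'})=\emptyset$ obtained from \ref{D3.4} together with the partition property all check out. They do yield that $R_i$ and $S$ are strong components of one and the same graph $D[\beta(T_{t'})]$, hence equal, hence $R_{i+1}\subseteq S=R_i$; termination at the leaves is as you say. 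You have correctly isolated \ref{D3.4} as the single property that forces robber-monotonicity.

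The one genuine omission is the negative half: ``not necessarily robber-monotone'' is an existential claim, and your argument ends with ``it suffices to exhibit'' a witnessing digraph without exhibiting one. The paper's own later material closes exactly this gap: the digraph $D_2$ of \cref{fig:D2} admits the width-$3$ \dtdNCW\ of \cref{fig:dtdNCW_D2}, and the paper notes (before \cref{cor:D2_c&r_game} and again after \cref{thm:c&r_game_closure}) that the induced $4$-cop strategy is not robber-monotone: when the cops move from $\{2,2',3,4\}$ to $\{2,2',3,3'\}$, the robber can re-enter $4$, a vertex that had already left the robber space. In fact something stronger holds: by Adler's result (\cite[Theorem~8]{adler_directed_2007}, invoked in the proof of \cref{thm:c&r_game_closure}), the robber defeats any $4$ cops playing robber-monotonically on $D_2$, even though $\dtwNCW{D_2}\leq 3$; so for \tdtwNCW the first half of the observation fails outright, not merely for the particular strategy produced by \cref{thm:c&r_game}. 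Incorporating this example (or any like it) would make your proof complete.
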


As a reminder: If a statement holds for the directed \treewidth with respect to every definition mentioned above, the directed \treewidth of a digraph $D$ is denoted by $\dtw{D}$. 

\begin{table} [!ht]
	\centering
    \begin{tabular}{ M{7cm}||M{1.1cm} M{1.1cm} M{1.1cm} M{1.1cm} M{1.1cm} }
        \toprule[0.5mm]
		 & \multicolumn{5}{c}{\small directed tree decompositions} \\
		 & \dtwNW{} & \dtwNCW{} & \dtwNCWE{} & \dtwSCE{} & \dtwSCD{}\\
		\hline \hline
		$\beta(T_t)$ contains at most one strong component of $D - \gamma(e)$ & & & & $\checkmark$ & $\checkmark$ \\ \hline
		bags are non-empty sets & $\checkmark$ & $\checkmark$ & & & $\checkmark$ \\ \hline
		$\beta(T_t)$ is disjoint from $\gamma(e)$ & $\checkmark$ & & & $\checkmark$ & $\checkmark$\\
		\bottomrule[0.5mm]
	\end{tabular}
	\caption{Differences between the directed tree decompositions. We let $\mathcal{T} \coloneqq (T, \beta, \gamma)$ be a directed tree decomposition of a digraph $D$ corresponding to each column and $e =(s,t) \in E(T)$. The check mark signifies that the property always holds, while the blank space indicates that the property does not necessarily hold.}\label{tab1}
\end{table}

\begin{observation} \label{observation_SC}
	Let $D$ be a digraph, $\mathcal{T} \coloneqq (T, \beta, \gamma)$ be a directed tree decomposition of $D$ and  $e = (s,t) \in E(T)$.
	\begin{enumerate}
		\item If $\mathcal{T}$ is an \dtdNW, $\beta(T_t)$ is the union of vertex sets of some strong components of $D - \gamma(e)$.
		\item If $\mathcal{T}$ is an \dtwNCW{}- or an \dtdNCWE, $\beta(T_t) - \gamma(e)$ is the union of vertex sets of some strong components of $D - \gamma(e)$.
		\item If $\mathcal{T}$ is an \dtwSCE{}- or an \dtdSCD, $\beta(T_t)$ is the vertex set of a strong component of $D - \gamma(e)$.
	\end{enumerate}
\end{observation}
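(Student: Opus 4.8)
The plan is to treat the three cases essentially uniformly, reducing everything to the single fact that two vertices lying in a common strong component $S$ of $D-\gamma(e)$ are joined by a closed walk inside $D-\gamma(e)$: concatenate a directed path from one to the other with a directed return path, both of which exist inside $S$ by strong connectivity. Case (3) then requires no work at all: for an \dtdSCE or an \dtdSCD, conditions \cref{D3.2} respectively \cref{D2.4} \emph{are} the assertion that $\beta(T_t)$ is the vertex set of a strong component of $D-\gamma(e)$, so the statement holds verbatim by definition.

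For Cases (1) and (2) I would prove the common claim that no strong component $S$ of $D-\gamma(e)$ is split by $\beta(T_t)$, i.e.\ $S$ is either contained in $\beta(T_t)$ or disjoint from it. Suppose not; then I can pick $a \in S \cap \beta(T_t)$ and $b \in S \setminus \beta(T_t)$ and form the closed walk through $a$ and $b$ inside $D-\gamma(e)$ described above. In Case (2) (an \dtdNCW or an \dtdNCWE) this walk contains a vertex of $\beta(T_t)$, namely $a$, and a vertex of $\V{D}-\beta(T_t)$, namely $b$, contradicting \cref{D2.2} (respectively \cref{NCWE_2}). In Case (1) (an \dtdNW) the same closed walk has both endpoints equal to $a \in \beta(T_t)$ and passes through $b$; since $b \in S$ avoids $\gamma(e)$ and $b \notin \beta(T_t)$, we have $b \in \V{D}-(\beta(T_t)\cup\gamma(e))$, so the walk is exactly of the forbidden type in \cref{D1.2}.

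With the no-split claim in hand I would conclude the ``union'' statements. Every strong component of $D-\gamma(e)$ lies within $\V{D}-\gamma(e)$ and is therefore disjoint from $\gamma(e)$; so in Case (2) a component meets $\beta(T_t)-\gamma(e)$ exactly when it meets $\beta(T_t)$, and $\beta(T_t)-\gamma(e)$ is precisely the union of those components contained in $\beta(T_t)$ (every vertex of $\beta(T_t)-\gamma(e)$ sits in such a component, and conversely each such component sits inside $\beta(T_t)-\gamma(e)$). For Case (1), \cref{D1.2} additionally gives $\beta(T_t) \subseteq \V{D}-\gamma(e)$, hence $\beta(T_t)-\gamma(e)=\beta(T_t)$, and so $\beta(T_t)$ itself is the union of the strong components contained in it.

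The content here is light and I do not expect a conceptual obstacle. The only genuine care is in making the union statement exact rather than a mere inclusion --- checking that every vertex of the target set lands in some strong component and that each component falls wholly inside or outside $\beta(T_t)$ --- together with the bookkeeping observation used in Case (2) that strong components of $D-\gamma(e)$ automatically avoid $\gamma(e)$, which is exactly what lets $\beta(T_t)$ and $\beta(T_t)-\gamma(e)$ be treated interchangeably on a single component.
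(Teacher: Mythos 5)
Your proof is correct: the paper states this as an observation without proof, and your argument — case (3) holding verbatim by definition, and cases (1) and (2) following from the fact that a strong component meeting both $\beta(T_t)$ and its complement would yield a closed walk in $D-\gamma(e)$ violating \ref{D1.2}, respectively \ref{D2.2} or \ref{NCWE_2} — is precisely the routine verification the paper treats as immediate. The two points of care you flag (exact equality of the union, and that strong components of $D-\gamma(e)$ automatically avoid $\gamma(e)$) are handled correctly.
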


The following lemma follows directly from the above observation.

\begin{lemma} [\cite{kreutzer_width-measures_2014}]
	\label{lem:lemma_strong_separator}
	Let $D$ be a digraph and $\mathcal{T} \coloneqq (T, \beta, \gamma)$ be a directed tree decomposition of $D$ (for any of the \cref{def:1dtd,def:2dtd,def:3dtd,def:4dtd}).
	\begin{enumerate}
		\item For every $e = (s,t) \in E(T)$, $\gamma(e)$ is a separator in $D$, i.e.~if $S_s, S_t$ are the two components of $T - e$, then every strong component of $D - \gamma(e)$ is either contained in $\beta(S_s)$ or $\beta(S_t)$.
		\item If $t \in V(T)$ and $S_1,..., S_l$ are the components of $T - t$, then every strong component of $D - \Gamma(t)$ is contained in exactly one $\beta(S_i)$ for $1 \leq i \leq l$.
	\end{enumerate}
\end{lemma}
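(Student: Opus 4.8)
The plan is to obtain both parts directly from \cref{observation_SC}, which in each of the five settings describes $\beta(T_t)$ (or $\beta(T_t)-\gamma(e)$) as a union of strong components of $D-\gamma(e)$. For the first part I fix an edge $e=(s,t)\in E(T)$ and note that deleting $e$ from $T$ leaves the subtree $S_t=T_t$ and the remaining tree $S_s=T-V(T_t)$. Because $\{\beta(t'):t'\in V(T)\}$ is a partition of $V(D)$ (possibly with empty parts), we have $\beta(S_t)=\beta(T_t)$ and $\beta(S_s)=V(D)-\beta(T_t)$, so it suffices to show that every strong component $C$ of $D-\gamma(e)$ is either contained in $\beta(T_t)$ or disjoint from it; this at once places $C$ inside $\beta(S_t)$ or $\beta(S_s)$.

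I would then split along the three items of \cref{observation_SC}. In the NW case $\beta(T_t)$ is itself a union of strong components of $D-\gamma(e)$, so the dichotomy is immediate; in the SCE and SCD cases $\beta(T_t)$ is a single strong component, and $C$ either equals it or is a distinct, hence vertex-disjoint, strong component. The only case needing care is NCW/NCWE, where $\beta(T_t)$ may meet $\gamma(e)$ and only $\beta(T_t)-\gamma(e)$ is guaranteed to be a union of strong components of $D-\gamma(e)$. Here I use that every strong component $C$ of $D-\gamma(e)$ avoids $\gamma(e)$, so $V(C)\cap\beta(T_t)=V(C)\cap(\beta(T_t)-\gamma(e))$; the dichotomy for $\beta(T_t)-\gamma(e)$ then transfers verbatim to $\beta(T_t)$.

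For the second part I use the first part as a black box. The components $S_1,\dots,S_l$ of $T-t$ are the child subtrees $T_{t_i}$ together with the parent-side tree $T-V(T_t)$ (present unless $t$ is the root), and each $S_i$ is joined to $t$ by exactly one incident edge $e_i$, with $\beta(S_i)$ equal to $\beta(T_{t_i})$ or to $V(D)-\beta(T_t)$ respectively. Given a strong component $C$ of $D-\Gamma(t)$, where $\Gamma(t)=\beta(t)\cup\bigcup_{e\sim t}\gamma(e)$, the subgraph $C$ avoids $\beta(t)$ and every $\gamma(e_i)$; being strongly connected, it lies inside a single strong component of each $D-\gamma(e_i)$, so the first part applied to $e_i$ gives $V(C)\subseteq\beta(S_i)$ or $V(C)\cap\beta(S_i)=\emptyset$ for every $i$. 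Since $V(C)$ is nonempty and disjoint from $\beta(t)$, the partition $V(D)=\beta(t)\cup\bigcup_i\beta(S_i)$ forces $V(C)$ to meet, and therefore to be contained in, exactly one $\beta(S_i)$.

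I expect the only real friction to lie in the second part: matching each component of $T-t$ to its unique incident edge while handling the parent edge symmetrically with the child edges, and justifying that a strongly connected subgraph avoiding $\gamma(e_i)$ sits inside a single strong component of $D-\gamma(e_i)$ so that the first part applies. The first part is otherwise a routine translation of \cref{observation_SC}, the guard-intersection point in the NCW/NCWE case being the sole subtlety.
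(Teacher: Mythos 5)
Your proof is correct and matches the paper's approach: the paper gives no explicit argument beyond asserting that the lemma ``follows directly from'' \cref{observation_SC}, and your write-up is precisely that derivation with the details filled in (the case split over the three items of \cref{observation_SC}, the guard-avoidance point for the NCW/NCWE case, and the reduction of the second part to the first via the partition of $V(D)$ into $\beta(t)$ and the $\beta(S_i)$).
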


\subsection{Overview of results}
\label{sec:result_overview}

Our main objective was to identify which of the given definitions are closed under taking butterfly minors and which are not.
In~\cref{sec:B_minor_closed_dtd}, we present the main result.
\begin{restatable}{theorem}{closedNCWE}
\label{thm:NCWE_closure}
	Let $D$, $D'$ be digraphs such that $D' \preccurlyeq_b D$. Then $\dtwNCWE{D'} \leq \dtwNCWE{D}$.
\end{restatable}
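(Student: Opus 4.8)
The plan is to establish monotonicity of $\dtwNCWE{}$ separately for the two operations that generate butterfly minors and then to compose them. Recall that $D' \preccurlyeq_b D$ means $D'$ is obtained from a subgraph $H \subseteq D$ by a sequence of single butterfly contractions, so by transitivity it suffices to bound $\dtwNCWE{}$ under (i) passing to a subgraph and (ii) contracting one butterfly contractible edge. For (i) I would simply restrict an optimal \dtdNCWE $(T,\beta,\gamma)$ of $D$ to $\V{H}$, replacing every bag and guard by its intersection with $\V{H}$: the width cannot grow, the bags still partition $\V{H}$, and \cref{NCWE_2} is inherited because $H-(\gamma(e)\cap\V{H})$ is a subgraph of $D-\gamma(e)$, so any offending closed walk in the former is already one in the latter. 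Thus the real content is (ii): given $D'$ obtained from $D$ by contracting a butterfly contractible edge $(u,v)$ into a vertex $x$, show $\dtwNCWE{D'}\le\dtwNCWE{D}$. By symmetry I assume $\Outdeg{D}{u}=1$.

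For (ii) I would fix an optimal \dtdNCWE $\mathcal{T}=(T,\beta,\gamma)$ of $D$, keep the tree $T$, and push it through the contraction map $\phi\colon\V{D}\to\V{D'}$ with $\phi(u)=\phi(v)=x$. Set $\gamma'(e)\coloneqq\phi(\gamma(e))$ for each $e\in E(T)$. The one genuine design decision is where the merged vertex $x$ goes: I would place it in the bag of $v$, i.e.\ let $v\in\beta(t_v)$ and set $\beta'(t_v)\coloneqq(\beta(t_v)-\{v\})\cup\{x\}$, delete $u$ from its own bag, and leave every other bag as is. The $\beta'(t)$ then partition $\V{D'}$, and since $\phi$ never increases cardinalities and $\Gamma'(t)\subseteq\phi(\Gamma(t))$ for every $t$, the width of $\mathcal{T}'=(T,\beta',\gamma')$ is at most that of $\mathcal{T}$.

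The crux is \cref{NCWE_2} for $\mathcal{T}'$, and the whole point of placing $x$ at $t_v$ is the resulting equivalence: for every edge $e=(s,t)$ we have $x\in\beta'(T_t)$ if and only if $v\in\beta(T_t)$. Suppose some $e=(s,t)$ carried a closed walk $W'$ in $D'-\gamma'(e)$ meeting both $\beta'(T_t)$ and its complement. If $W'$ avoids $x$ it uses only vertices outside $\{u,v\}$ and hence is a closed walk of $D-\gamma(e)$, whose witnesses retain their sides, contradicting \cref{NCWE_2} for $\mathcal{T}$. If $W'$ passes through $x$, then $u,v\notin\gamma(e)$ (else $x\in\gamma'(e)$), and \cref{lem:walkInMinor} lifts $W'$ to a closed walk $W$ of $D$ with $V(W)\subseteq(V(W')-\{x\})\cup\{u,v\}$ that contains $v$ because $\Outdeg{D}{u}=1$; since all its vertices avoid $\gamma(e)$, $W$ lives in $D-\gamma(e)$. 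Now I would transfer the two witnesses: a witness $w\neq x$ keeps its side because the bag of such $w$ is unchanged, while the witness $x$ is replaced by $v$, which lands on the correct side precisely by the equivalence $x\in\beta'(T_t)\Leftrightarrow v\in\beta(T_t)$. Hence $W$ meets $\beta(T_t)$ and its complement, contradicting \cref{NCWE_2} for $\mathcal{T}$.

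It remains to secure \cref{NCWE_1}: deleting $u$ empties the root exactly when $\beta(r)=\{u\}$ and $r=t_u$. I would dispose of this with a cheap re-rooting observation: because the forbidden-walk condition in \cref{NCWE_2} is symmetric under complementation (``meeting $A$ and its complement'' is unordered), reversing a tree edge while keeping the same guard turns the constraint for that edge into the identical constraint for the reversed edge, and $\Gamma(t)$ is unaffected. Consequently $\mathcal{T}'$ may be re-rooted at any node with a nonempty bag without changing any bag, guard, or the width, which restores a nonempty root. I expect the walk-through-$x$ case of \cref{NCWE_2} to be the main obstacle, since it is exactly there that the choice to house $x$ in $v$'s bag must be matched with the guarantee of \cref{lem:walkInMinor} that the lifted walk passes through $v$; the subgraph step, the width bound, and the re-rooting are comparatively routine.
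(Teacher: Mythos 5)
Your proposal is correct and takes essentially the same route as the paper's proof: reduce to a single butterfly contraction, keep the tree, replace $u,v$ by $x$ in the guards, house $x$ in the bag of $v$ (under $\Outdeg{}{u}=1$, the other case by symmetry), lift any offending closed walk back to the uncontracted graph via \cref{lem:walkInMinor}, and re-root at a node with nonempty bag to restore \ref{NCWE_1} --- the paper packages this re-rooting as \cref{lem:lemmaDifferentRoot} and the subgraph step as \cref{lem:closureLemma1}. The only slip is that the re-rooting is also needed in your subgraph step (i), since intersecting the bags with $V(H)$ can empty the root's bag; your own re-rooting observation fixes this verbatim.
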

None of the other definitions is closed under taking butterfly minors.
For \tdtwNW this was established by Adler~\cite{adler_directed_2007}, and for \tdtwNCW we provide a proof in~\cref{sec:NCW_not_closed}
\begin{restatable}{theorem}{NOTClosedNCW}
    \label{thm:NCW_not_closed}
    \tdtwNCW is not closed under taking butterfly minors.
\end{restatable}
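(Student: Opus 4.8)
The plan is to produce an explicit pair $D,D'$ with $D'\preccurlyeq_b D$ and $\dtwNCW{D'}>\dtwNCW{D}$. Since every \dtdNCW is also an \dtdNCWE, we have $\dtwNCWE{D}\le\dtwNCW{D}$ for every digraph, and by \cref{thm:NCWE_closure} the NCWE-width never grows under butterfly contraction; hence the only feature of the \dtwNCW{} definition that can cause an increase is the one distinguishing it from \dtwNCWE{}, namely that every bag must be nonempty. I would therefore search for a $D'$ in which this constraint is strictly costly, so that $\dtwNCWE{D'}=m$ but $\dtwNCW{D'}\ge m+1$, together with a major graph $D$ of $D'$ in which it is free, so that $\dtwNCW{D}=\dtwNCWE{D}=m$; contracting the surplus vertex of $D$ then returns $D'$ and raises the NCW-width.

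The clean structural reason such a gap can occur is a bound on the number of tree nodes: because the bags of an \dtdNCW are nonempty and partition $\V{D'}$, every \dtdNCW has at most $|\V{D'}|$ nodes, whereas an \dtdNCWE may use arbitrarily many nodes by inserting empty bags. I would engineer $D'$ so that every width-$m$ \dtdNCWE is forced to spend an empty bag at some internal node that only serves to branch or to switch guards, that is, a routing node carrying no vertex of its own, while at the same time every vertex of $D'$ is pinned to a position forced by condition \ref{D2.2}, so that no vertex can be moved into that node without pushing some $\Gamma(\cdot)$ above $m+1$. The major graph $D$ is then obtained by adding a single vertex $x$ that supplies exactly this routing node, placed so that one of its incident edges is butterfly contractible and contracting it recovers $D'$. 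The upper bound $\dtwNCW{D}\le m$ is proved by writing down the resulting decomposition, in which $x$ fills the former empty node, and verifying condition \ref{D2.2} with the help of \cref{observation_SC}.

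The hard part is the matching lower bound $\dtwNCW{D'}\ge m+1$. It cannot be derived from any of the standard obstructions: a bramble, haven, or $k$-linked set of order $m+1$ in $D'$ would, through \cref{cor:bramble_dtw}, \cref{thm:theorem_haven}, or \cref{thm:theorem_k_linked_set}, also certify $\dtwNCWE{D'}\ge m+1$, and then \cref{thm:NCWE_closure} would force $\dtwNCWE{D}$, hence $\dtwNCW{D}$, up to $m+1$ as well, collapsing the gap. The lower bound must therefore be a dedicated argument that uses the nonempty-partition condition directly. I would assume a width-$m$ \dtdNCW of $D'$, use \cref{lem:lemma_strong_separator} together with \cref{observation_SC} to locate the branch (or guard-switch) node forced by the strong-component structure of $D'$, and show that its bag, being nonempty yet unable to reuse a guard vertex without violating \ref{D2.2}, must contain a vertex outside the incident guards, giving $|\Gamma|\ge m+2$. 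Making this pinning argument airtight, in particular ruling out every rerouting of vertices, including the reuse of a guard vertex as a bag element, which the \dtwNCW{} definition permits, is the principal obstacle and is what pins down the precise gadget in the construction.
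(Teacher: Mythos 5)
Your high-level plan coincides with the paper's: its counterexample is exactly a pair of the shape you describe, namely $D' = D_1$ and $D = D_1'$ (modifications of Adler's examples, \cref{fig:D1,fig:D1'}) with $m=3$, where the extra vertices $\pi_1,\pi_2,\pi_3,-\pi_1,-\pi_2,-\pi_3$ of $D_1'$ fill precisely the positions that are forced to be empty bags in any width-$3$ decomposition of $D_1$; and your observation that no bramble, haven, or $k$-linked set can certify the lower bound is correct, since the paper notes $\dtwNCWE{D_1} \leq 3$, so any such obstruction would contradict \cref{thm:NCWE_closure}. However, there is a genuine gap, and you name it yourself: you never exhibit the gadget, and you never prove the lower bound $\dtwNCW{D'} \geq m+1$. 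What you offer is a list of properties a counterexample would need, plus a sketch of a ``pinning'' argument whose completion you concede is ``the principal obstacle''. Since the statement is an existence claim, the explicit construction and the lower bound \emph{are} the proof; without them nothing has been established.

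Moreover, the route you sketch for the lower bound --- locating a forced branching or guard-switch node via \cref{lem:lemma_strong_separator} and \cref{observation_SC} and arguing that its nonempty bag pushes some $|\Gamma(\cdot)|$ too high --- is likely to founder on exactly the difficulty you flag: under \cref{def:2dtd} the sets $\beta(T_t)$ may intersect the guards, so the local structural pinning that the paper does carry out for the \tdtwSCE lower bound (proof of \cref{thm:NCW<SCE}, where bags and guards are disjoint) does not transfer to \tdtwNCW. The paper avoids this entirely by going global: it converts a hypothetical width-$3$ \dtdNCW of $D_1$ into a finite strategy tree for the cops-and-robber game (\cref{lem:lemma_dtd_to_strategy_tree}), observes that the nonempty-bag condition caps the number of tree nodes at $|V(D_1)| = 34$, and then proves through a sequence of claims (\cref{lem:D_1_NCWgeq4}) that every finite strategy tree of width $4$ on $D_1$ has at least $36$ nodes, because the cops must repeatedly re-occupy vertex $0$ in rounds that shrink no robber space --- rounds which correspond exactly to empty bags. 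That counting argument, together with the strategy-tree machinery of \cref{sec:strategy_trees} it rests on, is the content your plan is missing.
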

For the remaining two, we prove this in~\cref{sec:SC0andSCDnotClosed}.
\begin{restatable}{theorem}{ThmSCENotClosed}
    \label{thm:theorem_dtw3_not_minor_closed}
    \tdtwSCE is not closed under taking butterfly minors.
\end{restatable}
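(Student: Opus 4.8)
The plan is to exhibit a pair of digraphs $D' \preccurlyeq_b D$ with $\dtwSCE{D'} > \dtwSCE{D}$. The starting point is the observation that every \dtdSCE is in particular an \dtdNCWE: if $\beta(T_t)$ is the vertex set of a single strong component of $D-\gamma(e)$ (and hence disjoint from $\gamma(e)$), then any closed walk in $D-\gamma(e)$ lies inside one strong component and therefore cannot meet both $\beta(T_t)$ and $V(D)-\beta(T_t)$, so \ref{NCWE_2} holds automatically. Consequently $\dtwNCWE{D} \le \dtwSCE{D}$ for every digraph $D$. Combined with the closure of \tdtwNCWE (\cref{thm:NCWE_closure}), any witness must satisfy $\dtwNCWE{D'} \le \dtwNCWE{D} \le \dtwSCE{D} < \dtwSCE{D'}$, so in particular $D'$ must display a \emph{strict} gap $\dtwNCWE{D'} < \dtwSCE{D'}$. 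This already shows that the counterexample cannot come from genuine connectivity, which is measured by brambles and is itself closed (\cref{thm:bn_thm}, \cref{cor:bramble_dtw}); it must be a purely definitional artefact of the two extra requirements that single \dtdSCEs out in \cref{tab1}, namely that each $\beta(T_t)$ be a single strong component and be disjoint from $\gamma(e)$.

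Guided by this, I would construct $D'$ so that both rigidities are simultaneously unavoidable: a strongly connected digraph in which, after deleting any guard small enough to keep the width low, the surviving strong components can be reassembled into a decomposition only by grouping several of them under one tree edge, or by reusing a separating vertex inside a bag — exactly the two moves that \ref{NCWE_2} permits but \ref{D3.2} forbids. The major $D$ is then obtained by subdividing the arcs of $D'$; this gives $D' \preccurlyeq_b D$ since every subdivision vertex has in- and out-degree one and its incident edges are butterfly contractible. Subdivision fragments the strong components of $D'$ and introduces singleton components that can be peeled off one at a time by small, pairwise disjoint guards, which is precisely what an \dtdSCE wants, so it should drive $\dtwSCE{D}$ down to the level of $\dtwNCWE{D'}$. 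The construction can be modelled on Adler's example for \tdtwNW~\cite{adler_directed_2007}, since \tdtwNW shares the disjoint-guard requirement with \tdtwSCE; the additional subtlety here is that I must also keep the single-strong-component constraint under control.

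The upper bound $\dtwSCE{D} \le m$ is the routine direction: I would write down an explicit \dtdSCE of $D$ that uses the subdivision vertices as buffers, check \ref{D3.1}–\ref{D3.3}, and bound every $|\Gamma(t)|$ by hand, with \cref{lem:lemma_strong_separator} certifying that the chosen guards are genuine separators. The hard part, and the main obstacle, is the matching lower bound $\dtwSCE{D'} \ge m+1$. A bramble or haven argument is useless here, because those obstructions bound $\dtwNCWE{D'}$ as well and we have deliberately arranged $\dtwNCWE{D'} < \dtwSCE{D'}$; the lower bound must therefore be a direct combinatorial analysis of \emph{every} \dtdSCE of $D'$. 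I would argue by contradiction: assuming a width-$m$ \dtdSCE, I locate the tree node $t$ at which the contested family of strong components (or the contested separating vertex) of $D'$ is split, and use \ref{D3.2} together with the disjointness of $\beta(T_t)$ and $\gamma(e)$ to force one extra vertex into $\Gamma(t)$, exceeding the budget. Pushing this forcing argument through for all possible shapes of the decomposition tree — in particular ruling out that empty bags or clever branching could circumvent it — is the delicate step and the technical heart of the proof.
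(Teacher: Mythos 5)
Your meta-level reasoning is sound and matches the paper's situation: every \dtdSCE\ is indeed an \dtdNCWE\ (a closed walk in $D-\gamma(e)$ lies inside a single strong component, so \ref{NCWE_2} is automatic), hence $\dtwNCWE{D} \leq \dtwSCE{D}$, and by \cref{thm:NCWE_closure} any witness pair must exhibit a strict gap on the minor --- exactly what the paper's pair $D_2 \preccurlyeq_b D_2'$ (\cref{fig:D2,fig:D2'}) realises, with $\dtwNCWE{D_2} \leq 3 < 4 \leq \dtwSCE{D_2}$. Your identification of Adler's example as the template, and of an exhaustive analysis of every \dtdSCE\ of the minor as the only viable lower-bound technique, is also how the paper proceeds.

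The genuine gap is that the proposal never produces either of the two objects the proof actually consists of: a concrete pair of digraphs and a proof of the lower bound, and in both cases the detail you defer is precisely where the difficulty lies. For the construction, the paper's major graph $D_2'$ is \emph{not} an arc subdivision of $D_2$: it splits the vertex $4$ (and symmetrically $-4$) into $4$ and a new vertex $5$ that carries \emph{both} feedback arcs to $0$ and $0'$, which makes $(4,5)$ butterfly contractible and lets the single vertex $5$ seal both exits of the chain, so the guards in \cref{fig:dtd3_D2'} stay at size three. Subdividing the two feedback arcs separately, as your plan literally suggests, creates two exit vertices that must both be blocked (or $0$ and $0'$ both); a guard cutting off a tail segment of the chain then needs four vertices --- two frontier vertices plus one blocker per exit --- so the explicit ``buffer'' decomposition you describe would not have width $3$. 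For the lower bound, $\dtwSCE{D_2} \geq 4$ is not obtained by the local forcing argument you sketch: the paper first needs normalization machinery --- replacing an arbitrary decomposition by an \dtdSCEV\ with singleton bags (\cref{lem:SCEV_leq_SCE,lem:Adler_lemma2}) and eliminating deletable empty bags (\cref{lem:delete_emptybag_lemma}) --- and only then runs a global five-claim analysis (in the proof of \cref{thm:NCW<SCE}) that determines the possible leaves, shows at most one node branches, enumerates the configurations above the leaves, and reaches the contradiction $|\Gamma(b)| > 4$ at the branching node; this is what \cref{cor:D2_c&r_game}, cited in the paper's proof of the theorem, rests on. Without the empty-bag and singleton-bag normalization, ``every possible shape of the decomposition tree'' includes decompositions your forcing step does not obviously handle. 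As it stands, your text is a plausible research plan whose two essential steps remain unexecuted.
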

\begin{restatable}{theorem}{ThmSCDNotClosed}
    \label{thm:SCDNotClosed}
    \tdtwSCD is not closed under taking butterfly minors.
\end{restatable}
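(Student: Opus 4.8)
The goal is to produce a single pair of digraphs $D'\preccurlyeq_b D$ with $\dtwSCD{D'}>\dtwSCD{D}$; as in Adler's examples a gap of one already refutes closure, so I would not aim for an unbounded family. It pays to first delimit what such a witness can look like. By \cref{thm:NCWE_closure} we have $\dtwNCWE{D'}\le\dtwNCWE{D}$, and every classical obstruction for directed treewidth---havens, brambles, and $k$-linked sets---bounds \emph{all} the notions from below simultaneously. Hence none of them can make $\dtwSCD{D'}$ large while $\dtwSCD{D}$ stays small, and the gap is forced to originate solely in the two features distinguishing \cref{def:4dtd} from the other definitions: the single--strong-component requirement \ref{D2.4} (see \cref{observation_SC}) together with the guard--descendant disjointness \ref{D3.4}. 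Via the observation following \cref{def:4dtd}, this is exactly the gap between robber-monotone and non-monotone capture, which for digraphs is genuine.

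Guided by this, I would construct a small \emph{rigid core} $D'$ in which a bounded guard deletion leaves a strongly connected clump that cannot be peeled into single strong components one child at a time, as \ref{D2.4} demands, without some downward guard re-using a vertex already committed to a descendant bag---something \ref{D3.4} forbids. The major graph $D$ would then be obtained from $D'$ by a local replacement on the edges of this clump that inserts vertices of in- and out-degree one; each such vertex makes both incident edges butterfly contractible, so $D'\preccurlyeq_b D$ is immediate, while the new vertices supply precisely the extra guard material needed to satisfy \ref{D3.4} in $D$. In effect, the operation dissolves in $D$ the rigidity that is unavoidable in $D'$.

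The upper bound $\dtwSCD{D}\le k$ I would settle by exhibiting one explicit \dtdSCD of $D$ and checking \ref{D1.4}, \ref{D2.4}, and \ref{D3.4} by hand, using \cref{lem:walkInMinor} and \cref{observation_SC} to certify that every subtree bag is a single strong component of $D$ minus its guard. The lower bound $\dtwSCD{D'}>k$ is the crux and the step I expect to be hardest, precisely because no generic obstruction is available to invoke. I would instead argue directly: take an arbitrary \dtdSCD of $D'$, identify the tree edge whose guard first cuts into the rigid core, and show by an exchange/counting argument that \ref{D2.4} forces this guard to spend one vertex per isolated component while \ref{D3.4} bars re-using any of them further down, so the width is at least the size of the core. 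The delicate part is ruling out \emph{every} alternative decomposition---in particular those that reorder the peeling or try to hide the cost in the shape of $T$.

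A shortcut I would exploit to avoid redoing the lower bound is the pointwise inequality $\dtwSCE{G}\le\dtwSCD{G}$, valid for every digraph $G$ because any \dtdSCD is also an \dtdSCE: \ref{D1.4} is stronger than \ref{D3.1}, condition \ref{D2.4} is \ref{D3.2}, and nonempty bags give $|V(T)|\le|V(D)|\le|V(D)|^2$, hence \ref{D3.3}. Consequently $\dtwSCD{D'}\ge\dtwSCE{D'}$, so the lower bound transfers for free from the construction already used for \cref{thm:theorem_dtw3_not_minor_closed}, and all that remains is to certify that the \emph{same} major graph still has small width under the stricter \dtdSCD, i.e.\ that its narrow \dtdSCE can be upgraded to meet \ref{D3.4}; should that fail, inserting a few more degree-one vertices in $D$ frees the needed guard vertices while preserving $D'\preccurlyeq_b D$.
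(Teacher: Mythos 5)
Your closing shortcut is precisely the paper's proof: it reuses the pair $D_2 \preccurlyeq_b D_2'$ from \cref{thm:theorem_dtw3_not_minor_closed}, transfers the lower bound $\dtwSCD{D_2} \geq \dtwSCE{D_2} \geq 4$ via the pointwise inequality (which you justify correctly, and which also follows from \cref{fig:figure_dtw_compare} and \cref{cor:D2_c&r_game}), and the width-$3$ decomposition of $D_2'$ in \cref{fig:dtd3_D2'} is already an \dtdSCD, so no upgrading and no extra degree-one vertices are needed. The speculative rigid-core construction in your first paragraphs is superseded by this shortcut and can be dropped.
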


\begin{figure}[!ht]
    \begin{center}
    	\begin{tikzpicture}
	[
	state/.style={circle, draw=black, thick, minimum size=5mm, font=\small,inner sep=0pt},
	blank/.style={circle, ultra thin, minimum size=1mm},
	> = stealth, 
	shorten > = 1pt, 
	auto,
	node distance = 1.5cm, 
	line width= 0.2mm
	,scale=0.9, transform shape
	]

	\node[blank] (ncw) {$\mathsf{NCW}$};
	\node[blank] (sce) [below=2.5cm and 1cm of ncw] {$\mathsf{SC_{\emptyset}}$};
	\node[blank] (nw) [below right=0.7cm and 2cm of ncw] {$\mathsf{NW}$};
	\node[blank] (scd) [right=0cm and 1.3cm of nw] {$\mathsf{SC_d}$};
	\node[blank] (ncwe) [below left=0.7cm and 2cm of ncw] {$\mathsf{NCW_{\emptyset}}$};
    \node[blank] (1) [above right=-0.2cm and 2.5cm of scd] {$3\mathsf{NW} + 2$};
    \node[blank] (2) [below=-1.45cm of 1] {$3\mathsf{NCW} + 2$};
    \node[blank] (3) [below=-1.6cm of 2] {$3\mathsf{NCW_{\emptyset}} + 2$};
    \node[blank] (4) [below=-1.5cm of 3] {$3\mathsf{SC_{\emptyset}} + 2$};
    \node[blank] (5) [right=0.6cm and 1.8cm of scd] {};
	
	\path[<->] (ncw) edge node [sloped, pos=0.49, below=-0.4, align=center, fill=white!] {$\lesseqgtr$} 
                          node [pos=0.5, above=0.25, align=center, fill=white!] {\cref{thm:SCE<NCW}}
                          node [pos=0.5, below=0.2, align=center, fill=white!] {\cref{thm:NCW<SCE}} (sce);
	\path[<-] (ncw) edge node [pos=0.5, sloped, below=-0.3, fill=white!] {$\leq$}
                          node [pos=0.6, sloped, above=0.2, fill=white!] {def.} (nw);
	\path[<-] (sce) edge node [pos=0.5, sloped, below=-0.3, fill=white!] {$\leq$}
                          node [pos=0.6, sloped, below=0.2, fill=white!] {\cite{johnson_addendum_2001}} (nw);
	\path[<-] (nw) edge node [pos=0.5, sloped, below=-0.3, fill=white!] {$\leq$}
                        node [pos=0.6, sloped, above=0.2, fill=white!] {def.} (scd);
	\path[<-] (ncwe) edge node [pos=0.5, sloped, below=-0.3, fill=white!] {$\leq$} 
                          node [pos=0.6, sloped, above=0.2, fill=white!] {def.}(ncw);
	\path[<-] (ncwe) edge node [pos=0.5, sloped, below=-0.3, fill=white!] {$\leq$} 
                          node [pos=0.6, sloped, below=0.2, fill=white!] {def.}(sce);

    \path[<-] (scd) edge node [pos=0.5, sloped, below=-0.3, fill=white!] {$\leq$}
                         node [pos=0.5, above=0.2, align=center, fill=white!] {\cref{lem:comparing_dtwSCD_with_others}}(5);

\end{tikzpicture}
\caption{The relation between directed tree-width with respect to different types of directed tree decompositions. An arrow with `$\leq$' means bounded in one direction, and a bidirected arrow with `$\lesseqgtr$' means not bounded in any direction.}
\label{fig:figure_dtw_compare}
    \end{center}
\end{figure}
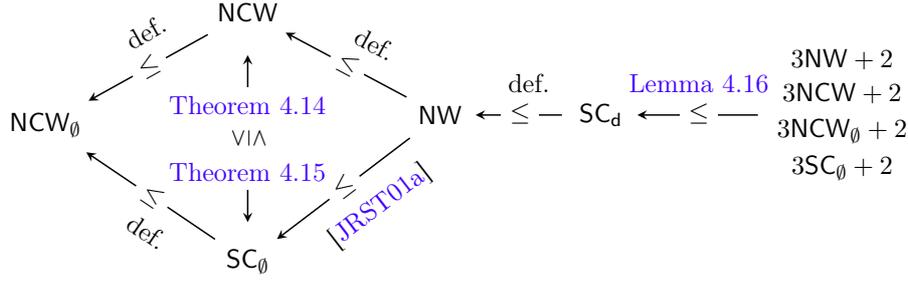

On the front of comparing the given definitions with each other, we complete the picture, which is illustrated in~\cref{fig:figure_dtw_compare}.
Most of the relations follow the definitions directly.
The proof in \cite{johnson_addendum_2001} shows that $\dtwSCE{D} \leq \dtwNW{D}$.
We present counterexamples to $\dtwNCW{D} \leq \dtwSCE{D}$ in \cref{sec:SC0_no_upper_bound_on_NCW} and $\dtwSCE{D} \leq \dtwNCW{D}$ in \cref{sec:counterexample_NCW_being_upper_bound_of_SCE}.
\begin{restatable}{theorem}{CompSCENCW}
    \label{thm:SCE<NCW}
	There is exists a digraph $D$ with $\dtwSCE{D} < \dtwNCW{D}$.
\end{restatable}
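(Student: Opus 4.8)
The backbone of my argument is the observation that the \emph{only} structural advantage a \dtdSCE has over an \dtdNCW is the use of empty bags. If $\mathcal{T}=(T,\beta,\gamma)$ is a \dtdSCE all of whose bags are non-empty, then $\mathcal{T}$ is already an \dtdNCW of exactly the same width: by~\ref{D3.2} every $\beta(T_t)$ is the vertex set of a single strong component of $D-\gamma(e)$, hence any closed walk of $D-\gamma(e)$ lives inside one strong component and cannot simultaneously meet $\beta(T_t)$ and its complement, which is precisely condition~\ref{D2.2}. Therefore $\dtwNCW{D}$ is bounded by the width of any non-empty-bag \dtdSCE, and a strict inequality $\dtwSCE{D}<\dtwNCW{D}$ can occur only for a digraph on which every width-optimal \dtdSCE is \emph{forced} to use an empty bag, in a way that even the extra freedom of \dtdNCW decompositions (unions of strong components inside a subtree and guards overlapping bags) cannot recover. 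The first step is thus to pin down a local configuration in which an empty bag strictly helps.

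For the upper bound I would construct a digraph $D$ containing a \emph{saturated} internal separator: a strong component $C$ whose boundary guard $\gamma_p$ already has size $w$, and which is split into its children by a single interior vertex $h\in C$, so that the node $t$ coordinating $C$ carries $\Gamma(t)=\gamma_p\cup\{h\}$ of size $w+1$. The design point is that $h$ is needed as the splitter of $C$ but can only be assigned to a bag \emph{inside} the one child subtree that is not guarded by $h$, while the other children are peeled off using $h$ as a guard. I would then exhibit an explicit \dtdSCE of width $w$ in which $t$ has an \emph{empty} bag; verifying the conditions reduces to checking that each declared subtree set is a strong component of $D$ minus the relevant guard and that $|V(T)|\le|V(D)|^2$, both routine. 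This yields $\dtwSCE{D}\le w$.

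The hard direction is $\dtwNCW{D}\ge w+1$, and it is the main obstacle, since it must exclude \emph{every} \dtdNCW, not just those mirroring the decomposition above. Assume for contradiction an \dtdNCW $\mathcal{T}$ of width $\le w$. By~\cref{observation_SC} the sets $\beta(T_t)-\gamma(e)$ still refine the strong-component structure, so the laminar family $\{\beta(T_t)\}$ is forced to isolate $C$ and, within it, the same nested pieces as before; but now all bags are non-empty, so $|V(T)|\le|V(D)|$ and no pure router node exists. At the node first capturing $C$ the width bound leaves only one width-neutral choice of non-empty bag, namely $\{h\}$, because $\gamma_p$ lies outside $C$ and any other vertex of $C$ would insert a $(w+2)$-nd element into $\Gamma$. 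Placing $h$ there, however, removes it from the child subtree it was coordinating, forcing that subtree to be re-decomposed with $h$ available only as a guard; I would show that this re-decomposition provably loses a unit of width at some descendant, and that the only alternative—relocating $h$ elsewhere—violates the strong-component refinement demanded by~\ref{D2.2}. Establishing this dichotomy uniformly is the technical heart: it is exactly here that the rigidity of the gadget (the precise incidences between $h$, $\gamma_p$, and the pieces of $C$) must be exploited, most plausibly through a direct structural case analysis on the explicit $D$ rather than a generic argument. Once the extra unit is forced at a single overloaded node, $w(\mathcal{T})\ge w+1$ follows, contradicting the assumption and giving $\dtwSCE{D}<\dtwNCW{D}$.
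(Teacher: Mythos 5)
Your opening observation is correct and matches the paper's own diagnosis: a \dtdSCE\ with no empty bags satisfies \ref{D2.2}, since a closed walk in $D-\gamma(e)$ lies inside a single strong component, so \ref{D3.2} subsumes the no-closed-walk condition; hence any witness to $\dtwSCE{D}<\dtwNCW{D}$ must force empty bags into every optimal \dtdSCE. But from there your proposal is a plan, not a proof, and the two components it leaves open are exactly the ones carrying all the difficulty. First, the theorem is an existence statement and you never produce the digraph: the ``saturated internal separator'' gadget is described only by the properties you would like it to have, and it is unclear such a gadget exists with a single splitter vertex $h$ used once --- in the paper's witness $D_1$ (\cref{fig:D1}) the role of the re-usable vertex is played by $0$, which the decomposition (equivalently, the cops) must return to many times, not once. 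Second, and more seriously, your lower-bound sketch is a local forcing argument (``the only width-neutral non-empty bag is $\{h\}$, and relocating $h$ violates the strong-component refinement''), but an \dtdNCW\ has precisely the freedoms that defeat local arguments of this kind: guards may intersect the bags of the subtree below them, and $\beta(T_t)-\gamma(e)$ need only be a \emph{union} of strong components, so the laminar family $\{\beta(T_t)\}$ is \emph{not} forced to isolate your component $C$ or reproduce the nested pieces of the intended decomposition. A correct proof must rule out every \dtdNCW, including ones whose shape bears no resemblance to the one you have in mind.

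The paper closes this gap with a global counting argument rather than a local one: by \cref{lem:lemma_dtd_to_strategy_tree}, a width-$3$ \dtdNCW\ of $D_1$ yields a finite strategy tree for $4$ cops with at most $|V(D_1)|=34$ nodes --- this is where non-emptiness of bags enters, and your remark that $|V(T)|\le|V(D)|$ is exactly the right lever, though you never use it --- and then the game analysis in \cref{lem:D_1_NCWgeq4} shows that every winning strategy tree for $4$ cops on $D_1$ has at least $36$ nodes, because the cops must repeatedly re-occupy vertex $0$ without shrinking the robber space; those re-occupations correspond precisely to the empty bags of the width-$3$ \dtdSCE\ in \cref{fig:SCE_dtd_D1}. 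If you want to complete your proposal, the most promising route is to adopt this scheme: converting the quantification over all \dtdNCWs into a quantification over all winning cop strategies, and then counting positions, is what makes the lower bound tractable.
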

This needs some extra machinery in the form of \emph{strategy trees}, which are introduced in \cref{sec:strategy_trees}.

At the end of \cref{sec:SC0_no_upper_bound_on_NCW}, we additionally discuss that the given example graph provides evidence that the converse of the first part of \cref{cor:bramble_dtw} does not hold for \dtwNW{}-, \dtwNCW{}- and \tdtwSCD.

\begin{restatable}{theorem}{CompNCWsmallerSCE}
    \label{thm:NCW<SCE}
    There exists a digraph $D$ with $\dtwNCW{D} < \dtwSCE{D}$.
\end{restatable}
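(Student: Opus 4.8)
The plan is to exhibit a single digraph $D$ for which I can write down a cheap \dtdNCW while proving that every \dtdSCE is strictly more expensive, giving $\dtwNCW{D} < \dtwSCE{D}$. Since all the notions agree up to the constant factors of \cref{thm:c&r_game}, the separation will be by a bounded amount, so $D$ can be taken to be a small explicit gadget. The construction should isolate the two ways in which \cref{def:2dtd} is more permissive than \cref{def:3dtd}: in an \dtdNCW a set $\beta(T_t)$ only has to meet the no-closed-walk condition \ref{D2.2}, so it may overlap its guard $\gamma(e)$ and $\beta(T_t)-\gamma(e)$ may be a union of several strong components of $D-\gamma(e)$ (\cref{observation_SC}), whereas in an \dtdSCE the set $\beta(T_t)$ must be exactly one strong component of $D-\gamma(e)$ and, by \cref{observation_SC} together with \cref{lem:lemma_strong_separator}, disjoint from $\gamma(e)$. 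I would exploit the overlap: design $D$ so that the natural \dtdNCW uses a separating vertex $g$ as the guard of a tree edge $e$ and simultaneously places the bag of $g$ \emph{inside} the subtree $T_t$ that $e$ cuts off, so that $g$ enters $\Gamma$ only at a few controlled nodes, while in every \dtdSCE the disjointness of \ref{D3.2}-type bags forces that same vertex strictly above the region it separates.

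Concretely, I would take $D$ strongly connected and assemble it from several interleaved ``petals'', where decomposing petal $i$ requires a designated separating vertex $g_i$, and the petals are linked so that no petal can be processed to completion before another must be begun. In the \dtdNCW each $g_i$ is used as the guard of the edge cutting off petal $i$ and at the same time sits in a bag deep inside that petal's subtree; verifying \ref{D2.2} then reduces to checking that, after deleting $g_i$, no closed walk runs between petal $i$ and the rest of $D$, which holds by construction because $g_i$ is the unique vertex through which petal $i$ communicates with the remainder. This yields an \dtdNCW in which every $\Gamma(t)$ stays bounded by a small constant $k$, so $\dtwNCW{D}\le k$; this upper-bound half is routine once the gadget is fixed.

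The main obstacle is the matching lower bound $\dtwSCE{D}\ge k+1$. Here I would assume an \dtdSCE of width $k$ and use its rigidity: by \ref{D3.2} and \cref{observation_SC} every $\beta(T_t)$ is exactly one strong component of $D-\gamma(e)$ and disjoint from $\gamma(e)$, and by \cref{lem:lemma_strong_separator} each $\Gamma(t)$ is a separator that distributes the strong components of $D-\Gamma(t)$ among the subtrees below $t$. The interleaving of the petals is chosen so that, in order to peel them off one single strong component at a time while keeping every subtree bag a single strong component, the separating vertices $g_i$ can never be released back into a descendant subtree and must instead accumulate at a common ancestor node, forcing some $\Gamma(t)$ to contain all of them and hence to exceed $k$. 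I expect this accumulation argument to be the delicate part: it amounts to showing that the strong-component discipline of \cref{def:3dtd} makes the associated search essentially robber-monotone in the sense of \cref{thm:c&r_game}, so that the non-monotone ``guard-reuse'' decomposition witnessing the \dtdNCW upper bound cannot be realised within the same width under the \dtdSCE constraints. I would convert this intuition into the strict inequality by a direct structural case analysis on where the $g_i$ are allowed to lie in any width-$k$ \dtdSCE, deriving a contradiction from the interleaving.
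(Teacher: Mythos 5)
Your plan correctly identifies the lever that the paper itself uses: an \dtdNCW may place a vertex inside $\beta(T_t)$ while simultaneously using it in the guard $\gamma(e)$ of the edge cutting off $T_t$, whereas an \dtdSCE\ cannot, since by \ref{D3.2} the set $\beta(T_t)$ is a strong component of $D-\gamma(e)$ and hence disjoint from it. The paper's witness $D_2$ (\cref{fig:D2}, taken from Adler) is precisely such a gadget, with the vertices $4$ and $-4$ playing the role of your separating vertices $g_i$: in the \dtdNCW\ of \cref{fig:dtdNCW_D2} they appear in guards of edges whose subtrees contain them, giving $\dtwNCW{D_2}\leq 3$. So conceptually you are on the paper's track. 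However, the proposal has a genuine gap: it proves nothing. The statement is an existence claim, so a proof must exhibit a concrete digraph, a concrete \dtdNCW, and a complete lower-bound argument; you supply none of these, and you yourself flag the lower bound as the part you merely ``expect'' to work. That expectation is exactly where the difficulty lies, and your sketch of it is missing the ingredients that make it go through.

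Concretely, a ``direct structural case analysis on where the $g_i$ are allowed to lie'' cannot even be started against an arbitrary width-$3$ \dtdSCE, because \ref{D3.1} allows empty bags, bags can have any size, and the tree can contain many redundant nodes (constrained only by \ref{D3.3}). The paper first has to build normalisation machinery before any case analysis is possible: it introduces the relaxations \dtdSCEV\ and \dtdUSCE\ (\cref{lem:SCEV_leq_SCE,lem:SCEV_leq_USCE}), proves a splitting lemma (\cref{lem:Adler_lemma1}) yielding a decomposition in which every bag has size at most one (\cref{lem:Adler_lemma2}), and then removes ``deletable empty bags'' (\cref{lem:delete_emptybag_lemma}); only with these reductions in hand does the finite case analysis (leaves must be $4$ and $-4$, at most one branching node, the five configurations of \cref{fig:dtd3_configurations}, and the final contradiction $|\Gamma(b)|>4$) become tractable. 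Your sketch also appeals to an informal claim that the strong-component discipline forces the search to be ``essentially robber-monotone''; this is an intuition, not an argument, and the paper never needs to formalise it --- the lower bound is purely about decompositions, not about the game. To turn your proposal into a proof you would either have to rediscover this normalisation machinery for your petal gadget, or simply adopt $D_2$ and Adler's argument, which is what the paper does.
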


In \cref{sec:counterexample_NCW_being_upper_bound_of_SCE}, we discuss how this implies that for \tdtwSCE the converse of the second part of \cref{thm:theorem_haven}, the converse of the first part of \cref{cor:bramble_dtw}, as well as the converse of \cref{lem:lemma_balanced_w_separator} do not hold.

Additionally, we learn that the gap in \cref{thm:c&r_game} cannot be closed all the way as the graphs we analyse in \cref{sec:counterexample_NCW_being_upper_bound_of_SCE,sec:counterexample_SCE_being_upper_bound_of_NCW} show that the \dtwSCE{}-, \dtwNW{}-, \dtwNCW{}- and \tdtwSCD of a digraph $D$ is not exactly one less than the minimal number of cops needed to win in the cops-and-robber game on $D$.

Nevertheless, $\dtwNCW{D}$ and $\dtwSCE{D}$ are within a constant factor of each other by the following lemma.
\begin{lemma} \label{lem:comparing_dtwSCD_with_others}
	Let $D$ be a digraph. Then for all $\variablestyle{w} \in \{\dtwNW{}, \dtwNCW{}, \dtwNCWE{}, \dtwSCE{}\}$ holds
	\begin{align*}
		\dtwSCD{D} \leq 3\cdot\variablestyle{w}(D) + 2.
	\end{align*}
\end{lemma}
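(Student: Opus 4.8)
The plan is to route everything through the cops-and-robber game, exploiting that all four measures $\dtwNW{}, \dtwNCW{}, \dtwNCWE{}, \dtwSCE{}$ govern the ordinary game whereas $\dtwSCD{}$ is its robber-monotone refinement, as recorded in the observation following \cref{def:4dtd}. A first simplification: by \cref{fig:figure_dtw_compare} we have $\dtwNCWE{D} \le \variablestyle{w}(D)$ for every $\variablestyle{w}$ in the list, so it suffices to prove the single inequality $\dtwSCD{D} \le 3\dtwNCWE{D}+2$; the general statement then follows since the right-hand side is monotone in $\dtwNCWE{D}$. Equivalently, as \cref{thm:c&r_game}(1) is phrased for the generic $\dtw{}$, the same argument applies verbatim to each of the four notions.

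Writing $k := \dtwNCWE{D}$, the argument has three movements. First, I turn the decomposition into a searcher strategy: since $k < k+1$, \cref{thm:c&r_game}(1) yields a winning strategy for $k+1$ cops on $D$. Second, I monotonise it: a winning strategy is converted into a robber-monotone one at the cost of a factor of roughly $3$, in the spirit of \cref{thm:c&r_game}(3), and this is precisely where the coefficient $3$ and the additive constant of the target bound are spent. Third, I read an \dtdSCD off the robber-monotone strategy: the robber spaces are nested along every play ($R_0 \supseteq R_1 \supseteq \cdots$) and organise into a rooted tree, on which the successive cop positions become the guards $\gamma$ and the vertices leaving the robber space become the bags $\beta$. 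This is the converse of the observation following \cref{def:4dtd}.

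The delicate point, and the step I expect to be the main obstacle, is to land on the \emph{exact} constant $3k+2$ while certifying the \emph{strict} \dtdSCD conditions rather than only the weaker \dtdSCE ones. Applying \cref{thm:c&r_game}(3) as a black box is too lossy in the additive term, so I would instead open up the monotonisation and argue directly on the robber-monotone play. Two conditions need verification: that for every tree edge $e=(s,t)$ the set $\beta(T_t)$ is the vertex set of a \emph{single} strong component of $D-\gamma(e)$ (condition \ref{D2.4}, which is exactly what robber-monotonicity provides, each robber space $R_i$ being one strong component of $D-C_i$), and that at every node the union of the children's subtree-bags is disjoint from the incident guards (condition \ref{D3.4}). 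The remaining bookkeeping is to confirm that the guards never exceed the cop budget of the monotone strategy, giving $|\Gamma(t)| \le 3k+3$ at every node and hence $w(\mathcal{T}) \le 3k+2$.
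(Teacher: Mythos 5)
Your route through the cops-and-robber game is genuinely different from the paper's, but as written it has two real gaps, both at exactly the steps you yourself flag as ``to be verified''. First, the constant: as you concede, the black-box chain --- \cref{thm:c&r_game}(1) gives $k+1$ cops winning, \cref{thm:c&r_game}(3) then gives $3(k+1)+2 = 3k+5$ cops playing robber-monotonely --- overshoots the target, and your proposed fix, to ``open up the monotonisation and argue directly on the robber-monotone play'', is precisely the technical core of the lemma and is never carried out. Announcing that a re-derivation will land on $3k+2$ is not a proof that it does. Second, even granted a robber-monotone winning strategy for $m$ cops, the conversion into an \dtdSCD\ of width $m-1$ is not available anywhere in the paper: the observation after \cref{def:4dtd} gives only the forward direction (decomposition $\Rightarrow$ robber-monotone strategy), and the converse is exactly where \ref{D2.4} and \ref{D3.4} must be established. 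A strategy is a function on game positions, not a tree; organising the plays into a finite rooted tree, checking that each $\beta(T_t)$ is a single strong component of $D-\gamma(e)$, and checking the disjointness condition \ref{D3.4} requires machinery of the kind the paper only develops later (strategy trees, \cref{sec:strategy_trees}) and for a different purpose. You name these verifications, but performing them is the proof.

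For contrast, the paper's own proof avoids the game entirely and is two sentences long: if $\variablestyle{w}(D) \le k$ for any of the four notions, then by \cref{lem:lemma_balanced_w_separator} $D$ contains no $(k+1)$-linked set, and \cref{thm:theorem_k_linked_set} (the duality of Johnson, Robertson, Seymour and Thomas, whose proof constructs precisely an \dtdSCD) then yields an \dtdSCD\ of the stated width. All of the monotonisation and strategy-to-decomposition difficulties that your plan would have to confront are already packaged inside that cited theorem, which is why the obstruction route is the economical one here; if you want to salvage your approach, you would in effect be re-proving that theorem.
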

\begin{proof}
	 By \cref{lem:lemma_balanced_w_separator}, if a digraph $D$ has $\variablestyle{w}(D) \leq k$ for $\variablestyle{w} \in \{\dtwNW{}, \dtwNCW{}, \dtwNCWE{}, \dtwSCE{}\}$, then $D$ does not contain a $k+1$-linked set. Then by \cref{thm:theorem_k_linked_set}, $D$ has an \dtdSCD\ of width at most $3k+2$.
\end{proof}

Finally, in~\cref{sec:c&r_of_B_minor}, we also discuss the behaviour of winning strategies in the cops and robber game under taking butterfly minors.
\begin{restatable}{theorem}{ThmCopsAndRobbermonotonenotclosed}
\label{thm:c&r_game_closure}
    The number of cops needed to win the robber-monotone cops and robber game is not closed under taking butterfly minors.
\end{restatable}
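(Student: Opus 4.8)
The plan is to reduce the statement to \cref{thm:SCDNotClosed}, which already exhibits digraphs witnessing that \dtwSCD{} is not closed under taking butterfly minors. Write $\mathrm{mon}(D)$ for the least number of cops that admit a robber-monotone winning strategy on a digraph $D$. I would first establish the exact correspondence $\mathrm{mon}(D) = \dtwSCD{D} + 1$ for every digraph $D$, and then transfer the non-closure of \dtwSCD{} to $\mathrm{mon}$.

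One direction, $\mathrm{mon}(D) \leq \dtwSCD{D}+1$, is exactly the observation following \cref{def:4dtd}: if $\dtwSCD{D} < k$ then $k$ cops win robber-monotonically. The content lies in the converse, that a robber-monotone winning strategy for $k$ cops yields an \dtdSCD\ of width at most $k-1$; this is the defining feature of \cref{def:4dtd} and is essentially due to \cite{kreutzer_width-measures_2014}, but I would spell it out as follows. After the routine reduction making the cops' response depend only on the current robber space, the robber spaces arising in plays consistent with the strategy form a laminar family --- nested within a single play by robber-monotonicity ($R_i \supseteq R_{i+1}$), and split into pairwise disjoint strong components at each branching --- so their Hasse diagram (rooted at $V(D)$) is a rooted directed tree $T$. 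For the node $t_R$ of a space $R$ I would let $\beta(t_R)$ consist of the cops newly placed inside $R$, so that $\beta(T_{t_R}) = R$, and let the guard of each tree edge be the set of cops separating the corresponding space from its parent. Condition \ref{D2.4} then holds because $R$ is a strong component of $D$ minus its guard, and condition \ref{D3.4} is a direct translation of robber-monotonicity: a guard vertex carries a cop, and a cop's vertex is never reoccupied by the robber, so no guard at $t$ lies in a descendant robber space, i.e.\ $\bigcup_i \beta(T_{t_i})$ avoids $\bigcup_{e \sim t}\gamma(e)$.

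With the equivalence in hand the theorem is immediate: by \cref{thm:SCDNotClosed} there are digraphs $D' \preccurlyeq_b D$ with $\dtwSCD{D'} > \dtwSCD{D}$, and since these are integers $\dtwSCD{D'} \geq \dtwSCD{D}+1$, whence $\mathrm{mon}(D') = \dtwSCD{D'}+1 > \dtwSCD{D}+1 = \mathrm{mon}(D)$, so $\mathrm{mon}$ is not closed under butterfly minors. The step I expect to be the main obstacle is the width bound in the converse direction: one must choose the guards so that, at every node $t$, the bag $\beta(t)$ together with the guards on all incident edges lies within a single cop position of the play and therefore has size at most $k$. This is precisely where robber-monotonicity must be exploited carefully --- a space's outer guard has to be reconciled with the cops operating inside it without overcounting --- and it is the only point where more than bookkeeping is required.
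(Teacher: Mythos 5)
There is a genuine gap, and it is fatal to the whole reduction: the exact correspondence $\mathrm{mon}(D) = \dtwSCD{D} + 1$ on which you base everything is false, and this paper itself contains the counterexample. By \cref{lem:robber-mon_strategy_for_D_1}, four cops have a robber-monotone winning strategy on the digraph $D_1$ of \cref{fig:D1}, so $\mathrm{mon}(D_1) \leq 4$; but \cref{cor:D1_c&r_game} shows $\dtwSCD{D_1} \geq 4$, i.e.\ $\dtwSCD{D_1} + 1 \geq 5$. Hence the direction you call ``the content'' --- that a robber-monotone winning strategy for $k$ cops yields an \dtdSCD\ of width at most $k-1$ --- cannot be proved; only the other direction (the observation following \cref{def:4dtd}) holds. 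Your argument invokes the false direction exactly where it matters: to obtain the lower bound $\mathrm{mon}(D_2) \geq \dtwSCD{D_2} + 1 \geq 5$ on the butterfly minor $D_2$ provided by \cref{thm:SCDNotClosed}. Without it, you get no lower bound on $\mathrm{mon}$ of the minor, and the theorem does not follow.

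Independently of the width bookkeeping you flag as the main obstacle, the construction founders on condition \ref{D1.4}: the bags of an \dtdSCD\ must be nonempty. Your bags are ``the cops newly placed inside $R$'', and a robber-monotone strategy may be forced to make moves that place no cop inside the current robber space, merely rearranging the guards; the corresponding bags are then empty. This is exactly what happens on $D_1$: in the strategy of \cref{lem:robber-mon_strategy_for_D_1} the cops repeatedly reoccupy the vertex $0$, which lies outside the robber space, and the paper's analysis around \cref{lem:D_1_NCWgeq4} (the ``intrinsic necessity of empty bags'') shows such moves cannot be eliminated. So what your Hasse-diagram construction produces is a decomposition with possibly empty bags --- an object in the spirit of an \dtdNCWE\ or an \dtdSCE\ --- and not an \dtdSCD; that distinction is precisely the subject of the paper. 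For comparison, the paper proves the theorem without any such equivalence (none is available): it exhibits an explicit robber-monotone winning strategy for $4$ cops on $D_2'$ from \cref{fig:D2'}, and cites \cite[Theorem 8]{adler_directed_2007} for the fact that the robber defeats $4$ robber-monotonically playing cops on $D_2$.
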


\section{Directed \treewidth that is closed under taking butterfly minors}\label{sec:B_minor_closed_dtd}

Here, we prove that \dtdNCWEs are closed under taking butterfly minors (\cref{thm:NCWE_closure}).
The following \namecref{lem:lemmaDifferentRoot} shows that a given \dtdNCWE\ is robust with respect to choosing a different root vertex.

\begin{lemma}\label{lem:lemmaDifferentRoot}
	Let $D$ be a digraph and $\mathcal{T} \coloneqq (T, \beta, \gamma)$ be an \dtdNCWE\ of $D$ of width $k$. Let $r \in V(T)$ be the root of $T$ and $r' \in V(T)$ such that $r' \neq r$ and $\beta(r') \neq \emptyset$. Then there is an \dtdNCWE\ $\mathcal{T'} \coloneqq (T', \beta', \gamma')$ of $D$ of width $k$ such that $r'$ is the root of it.
\end{lemma}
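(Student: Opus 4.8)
The plan is to re-root the tree $T$ at $r'$ and show that the new decomposition can be repaired into a valid \dtdNCWE\ without increasing the width. Recall that in the directed tree $T$ each edge points from parent to child, and $T_t$ is the set of vertices reachable from $t$. When we change the root from $r$ to $r'$, the natural thing to do is to reverse the orientation of every edge on the unique undirected tree-path from $r$ to $r'$, so that $r'$ becomes a source and all edges again point away from the root. Formally, I would let $P = (r = p_0, p_1, \dots, p_\ell = r')$ be this path, set $T'$ to be $T$ with each edge $(p_{i}, p_{i+1})$ replaced by $(p_{i+1}, p_i)$, keep $\beta' \coloneqq \beta$ unchanged, and define $\gamma'$ so that each flipped edge retains the same guard as before it was flipped, i.e.\ $\gamma'(\{p_i, p_{i+1}\}) \coloneqq \gamma(\{p_i, p_{i+1}\})$, and $\gamma'(e) \coloneqq \gamma(e)$ for every edge $e$ not on $P$.

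The key observation that makes this work is that condition \ref{NCWE_2} is stated symmetrically in $\beta(T_t)$ and its complement: for an edge $e=(s,t)$ it forbids a closed walk in $D-\gamma(e)$ that meets both $\beta(T_t)$ and $V(D)-\beta(T_t)$. When we flip an edge $e = (p_i,p_{i+1})$ on $P$, the subtree hanging below it in the new orientation is exactly the complement (within $V(T)$) of the old subtree $T_{p_{i+1}}$ — that is, if $t'$ denotes the new child end of the flipped edge, then $\beta'(T'_{t'}) = V(D) - \beta(T_{p_{i+1}})$, since $\{\beta(t):t\in V(T)\}$ partitions $V(D)$. Because condition \ref{NCWE_2} treats a set and its complement interchangeably, and because the guard on that edge is unchanged, the condition for the flipped edge in $\mathcal{T}'$ is literally the same statement as the condition for the original edge in $\mathcal{T}$, which held by assumption. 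For edges not on $P$, neither their orientation, their guard, nor the vertex set below them changes, so \ref{NCWE_2} is inherited directly.

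I would then verify condition \ref{NCWE_1}: the partition $\{\beta'(t)\}=\{\beta(t)\}$ is unchanged, and since $\beta(r')\neq\emptyset$ by hypothesis and $r'$ is the new root, the nonempty-root requirement is satisfied. The width is preserved because $\Gamma(t)=\beta(t)\cup\bigcup_{e\sim t}\gamma(e)$ depends only on the bag at $t$ and the guards on the (undirected) edges incident to $t$, none of which change under reorientation — only edge directions and the global root change, not the sets $\beta$ or the multiset of guards incident to any vertex. Hence $w(\mathcal{T}')=w(\mathcal{T})=k$.

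The main obstacle to watch is the bookkeeping around which subtree sits below a flipped edge, i.e.\ making precise that $\beta'(T'_{t'})$ is the complement of the old $\beta(T_{p_{i+1}})$; this requires carefully tracking how reachability in $T$ changes when exactly the path edges are reversed, and confirming that for the flipped edges the relevant ``below'' set is the complement while for all other edges it is unchanged. Once this complement identity is established cleanly, the symmetry of \ref{NCWE_2} does all the real work, so I do not expect any genuinely hard estimation — the difficulty is purely in setting up the reorientation so that the correspondence of conditions is transparent.
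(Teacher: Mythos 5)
Your proposal is correct and follows essentially the same route as the paper's proof: re-root by reversing the edges on the $(r,r')$-path while keeping all bags and guards, observe that for a flipped edge the new subtree's bag-set is the complement $V(D)-\beta(T_{p_{i+1}})$ of the old one (using that the bags partition $V(D)$), and conclude via the symmetry of \ref{NCWE_2} in a set and its complement, with \ref{NCWE_1} and the width preserved since neither the bags nor the guards incident to any node change. The complement identity you flag as the main bookkeeping point is exactly the one-line observation the paper makes ($\beta'(T'_t) = V(D) - \beta(T_s)$), so no gap remains.
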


\begin{proof}
	We construct $\mathcal{T'} \coloneqq (T', \beta', \gamma')$ as follows: for all $v \in V(T)$, we put $v \in V(T')$ and $\beta'(v) \coloneqq \beta(v)$. If $e = (s,t) \in E(T)$ is on the $(r,r')$-path in $T$, put $e' = (t,s) \in E(T')$ and $\gamma'(e') \coloneqq \gamma(e)$, otherwise put $e \in E(T')$ and $\gamma'(e) \coloneqq \gamma(e)$, i.e. every edge on the $(r,r')$-path is oriented away from $r'$ in $T'$. Then, $T'$ has $r'$ as its root.
	
	We claim that $\mathcal{T'}$ is an \dtdNCWE\ of $D$ of width $k$. As $\mathcal{T}$ satisfies \ref{NCWE_1}, $\mathcal{T'}$ satisfies the condition by construction. Since for each $v \in V(T')$, $\Gamma(v)$ is the same as in $T$, the width of $\mathcal{T'}$ is $k$. For every $e=(s,t) \in E(T')$ that is not on the $(r,r')$-path in $T$, it holds that $\gamma'(e) = \gamma(e)$, $\beta'(t) = \beta(t)$ and consequently, $\beta'(T'_t) = \beta(T_t)$. Therefore, there is no closed walk in $D - \gamma'(e)$ containing a vertex of $\beta'(T'_t)$ and a vertex of $V(D)- \beta'(T'_t)$.
	
	Let $e=(s,t) \in E(T')$ such that $e'=(t,s) \in E(T)$ is on the $(r,r')$-path in $T$. Then $\beta'(T'_t) = V(D) - \beta(T_s)$ and $V(D) - \beta'(T'_t) = \beta(T_s)$.
	Since $\mathcal{T}$ satisfies \ref{NCWE_2}, there is no closed walk in $D-\gamma(e')$ containing a vertex of $\beta(T_s)$ and a vertex of $V(D) - \beta(T_s)$. As $\gamma'(e) = \gamma(e')$, there is no closed walk in $D-\gamma'(e)$ containing a vertex of $V(D)- \beta'(T'_t)$ and a vertex of $\beta'(T'_t)$, i.e.~$\mathcal{T'}$ satisfies \ref{NCWE_2} as well.
\end{proof}

As a side remark, the above \namecref{lem:lemmaDifferentRoot} holds only for $\dtwNCW{}$- and \dtdNCWEs.
This is because every other decomposition $(T, \beta, \gamma)$ requires $\beta(T_t) \subseteq V(D) - \gamma(e)$ to be satisfied for every $e = (s,t) \in E(T)$.
The following \namecref{lem:closureLemma1} shows that the \tdtwNCWE of a digraph is not increased by deleting vertices or edges.

\begin{lemma} \label{lem:closureLemma1}
	Let $D$, $H$ be digraphs such that $H \subseteq D$. Then $\dtwNCWE{H} \leq \dtwNCWE{D}$.
\end{lemma}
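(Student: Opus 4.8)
The plan is to show that given an $\dtwNCWE$-decomposition $\mathcal{T} \coloneqq (T, \beta, \gamma)$ of $D$ of width $k$, I can produce an $\dtwNCWE$-decomposition of $H$ of width at most $k$. Since $H \subseteq D$ is obtained by deleting some vertices and edges, and since deleting edges only removes closed walks (never creates them), the edge-deletion case is essentially free: condition \ref{NCWE_2} is a \emph{negative} condition asserting the nonexistence of certain closed walks, so passing to a subgraph can only help. Thus the real content is vertex deletion, and by induction it suffices to handle the deletion of a single vertex $x \in V(D)$, setting $H = D - x$.

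First I would define the candidate decomposition $\mathcal{T'} \coloneqq (T, \beta', \gamma')$ on the same tree $T$ by simply removing $x$ everywhere: set $\beta'(t) \coloneqq \beta(t) - \{x\}$ for every $t \in V(T)$ and $\gamma'(e) \coloneqq \gamma(e) - \{x\}$ for every $e \in E(T)$. Since $x$ lies in exactly one bag (the bags partition $V(D)$), removing it yields a partition of $V(D-x)$ into possibly-empty sets, so the partition requirement of \ref{NCWE_1} is immediate. The width does not increase, since $\Gamma'(t) \subseteq \Gamma(t) - \{x\}$ for every $t$, so $|\Gamma'(t)| \leq |\Gamma(t)|$ and thus $w(\mathcal{T'}) \leq w(\mathcal{T}) = k$. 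For condition \ref{NCWE_2}, fix an edge $e = (s,t)$; observe that $\beta'(T'_t) = \beta(T_t) - \{x\}$, and any closed walk in $(D-x) - \gamma'(e)$ is in particular a closed walk in $D - \gamma(e)$ avoiding $x$, so if it meets both $\beta'(T'_t)$ and $V(D-x) - \beta'(T'_t)$ it would witness a violation of \ref{NCWE_2} for $\mathcal{T}$ (the two sides correspond, after removing $x$, to the two sides for the original decomposition). Hence no such walk exists and \ref{NCWE_2} holds for $\mathcal{T'}$.

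The one subtlety to address is the root condition in \ref{NCWE_1}: we require $\beta'(r) \neq \emptyset$ for the root $r$ of $T$. This can fail if $\beta(r) = \{x\}$, so that after deletion the root bag becomes empty. This is exactly where \cref{lem:lemmaDifferentRoot} is used: since $x$ is being deleted and $D - x$ is nonempty (assuming $H$ has at least one vertex; the empty case is trivial), some other bag $\beta'(r')$ is nonempty, hence $\beta(r') \neq \emptyset$ as well, so I can first reroot $\mathcal{T}$ at $r'$ using \cref{lem:lemmaDifferentRoot} — obtaining an $\dtwNCWE$-decomposition of $D$ of the same width $k$ whose root bag survives the deletion of $x$ — and then perform the deletion as above. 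I expect this rerooting step to be the only genuine obstacle; everything else follows from the monotonicity of the ``no closed walk'' condition under passing to subgraphs. Iterating over all deleted vertices and then deleting edges completes the argument, giving $\dtwNCWE{H} \leq \dtwNCWE{D}$.
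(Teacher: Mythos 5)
Your proof is correct and takes essentially the same approach as the paper: restrict the decomposition to the subgraph (removing deleted vertices from all bags and guards, which cannot create new closed walks or increase the width) and invoke \cref{lem:lemmaDifferentRoot} to fix an empty root bag. The only differences are cosmetic — you reduce to single-vertex deletions by induction and reroot \emph{before} deleting rather than after, which is if anything slightly more careful, since \cref{lem:lemmaDifferentRoot} is stated only for valid decompositions and the paper applies it to an object whose root bag has already become empty.
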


\begin{proof}
	Let $k$ be the \tdtwNCWE of $D$ and $\mathcal{T} \coloneqq (T, \beta, \gamma)$ be an \dtdNCWE\ of $D$ of width $k$. Then we let $\mathcal{T'} \coloneqq (T, \beta', \gamma')$, where $\beta'(t) \coloneqq \beta(t) \cap V(H)$ for all $t \in V(T)$ and $\gamma'(e) \coloneqq \gamma(e) \cap V(H)$ for all $e \in E(T)$. If $\beta'(r)$ is empty, where $r$ is the root of $T$, then by \cref{lem:lemmaDifferentRoot}, we can find $\mathcal{T'}$ with a non-empty root. Then $\mathcal{T'}$ is an \dtdNCWE\ of $H$ of width at most $k$. 
\end{proof}

Using this, we can proceed to prove our main result: the closeness of \tdtwNCWE under taking butterfly minors.

\closedNCWE*
\begin{proof}
	As $D' \preccurlyeq_b D$, there is a subgraph $H \subseteq D$ such that $D'$ is obtained from $H$ by butterfly contractions only.
    By \cref{lem:closureLemma1}, we have $\dtwNCWE{H} \leq \dtwNCWE{D}$. 
	Let us call the \emph{complexity} of $H$ the number of edges in $H$ that are butterfly contracted to form $D'$.
    We prove this by induction on the complexity of $H$.
    If the complexity is $0$, there is nothing to show. Therefore, assume that the complexity is at least $1$. We choose a butterfly contractable edge $e=(u,v) \in E(H)$ such that $e$ is butterfly contracted in $H$ to obtain $D'$. Let $\hat{D}$ be the digraph obtained from $H$ by butterfly contracting $e$ into the vertex $x_e \in V(\hat{D})$.
	
	Let $k$ be the \tdtwNCWE\ of $H$ and $\mathcal{T} \coloneqq (T, \beta, \gamma)$ be an \dtdNCWE\ of $H$ of width $k$. Let $u_T \in V(T)$ and $v_T \in V(T)$ such that $u \in \beta(u_T)$ and $v \in \beta(v_T)$.
	We construct an \dtdNCWE\ $\mathcal{T'} \coloneqq (T', \beta', \gamma')$ of $\hat{D}$ as follows: 
	\begin{itemize}
		\item $T'$ is an isomorphic copy of $T$,
		\item for all $f \in E(T')$, if $u \in \gamma(f)$ or $v \in \gamma(f)$, then let $\gamma'(f) \coloneqq (\gamma(f) - \{u,v\}) \cup \{x_e\}$; otherwise let $\gamma'(f) \coloneqq \gamma(f)$,
		\item for all $t \in V(T') - \{u_T,v_T\}$, let $\beta'(t) \coloneqq \beta(t)$, and
		\item for $u_T, v_T \in V(T')$, 
		if $\Outdeg{H}{u} = 1$, then let $\beta'(u_T) \coloneqq \beta(u_T) - \{u\}$ and $\beta'(v_T) \coloneqq (\beta(v_T) - \{v\}) \cup \{x_e\}$; otherwise let $\beta'(u_T) \coloneqq (\beta(u_T) - \{u\}) \cup \{x_e\}$ and $\beta'(v_T) \coloneqq \beta(v_T) - \{v\}$. 
	\end{itemize} 
	The width of $\mathcal{T'}$ is not increased by this construction, and $\{\beta'(t): t \in V(T')\}$ is a partition of $V(\hat{D})$ into possibly empty sets.
	
	We claim that $\mathcal{T'}$ satisfies \ref{NCWE_2}. Towards a contradiction, suppose there is an edge $f=(s,t) \in E(T')$ such that there is a closed walk $W'$ in $\hat{D} - \gamma'(f)$ containing a vertex of $\beta'(T'_t)$ and a vertex of $V(\hat{D}) - \beta'(T'_t)$.  We first consider the case where $\gamma'(f) = (\gamma(f) - \{u,v\}) \cup \{x_e\}$.
	Since $x_e \notin V(\hat{D}) - \gamma'(f)$ and $u,v \notin V(\hat{D})$, $x_e, u$ and $v$ are not contained in $V(W')$. Therefore, by construction, $W'$ is also in $H - ((\gamma'(f) - \{x_e\}) \cup \{u,v\}) \subseteq H - \gamma(f)$ and contains a vertex of $\beta(T_t)$ and a vertex of $V(H) - \beta(T_t)$, which is a contradiction.
	
	Next, we consider the case where $\gamma'(f) = \gamma(f)$. Then we have $x_e \notin \gamma'(f)$ and $\{u,v\} \cap \gamma(f) = \emptyset$. Furthermore, $V(\hat{D}) - \gamma'(f) = ((V(H) - \gamma(f)) - \{u,v\}) \cup \{x_e\}$. In other words, $\hat{D} - \gamma'(f)$ can be obtained from $H - \gamma(f)$ by butterfly contracting $e$. 
	Moreover, by a similar argument as above, if $x_e \notin V(W')$, then $W'$ is also in $H - \gamma(f)$, which leads to a contradiction.
	Therefore, the only case in which the closed walk $W'$ exists is that $W'$ contains $x_e$. By construction, we have either $x_e \in \beta'(T'_t)$ or $x_e \in V(\hat{D}) - \beta'(T'_t)$. 
	We consider the former case where $x_e \in \beta'(T'_t)$. The latter case is analogous. If $\Outdeg{H}{u} = 1$, then $v \in \beta(T_t)$. Since $u \notin V(\hat{D}) - \beta'(T'_t)$, $W'$ contains a vertex $w \in V(\hat{D}) - \beta'(T'_t)$ such that $w \neq u$. Then by construction, we have $w \in V(H) - \beta(T_t)$. By \cref{lem:walkInMinor}, there is a closed walk $W$ in $H - \gamma(f)$ containing $v \in \beta(T_t)$ and $w \in V(H) - \beta(T_t)$, which is a contradiction. Otherwise, we have $u \in \beta(T_t)$. Since $v \notin V(\hat{D}) - \beta'(T'_t)$, $W'$ contains a vertex $w \in V(\hat{D}) - \beta'(T'_t)$ such that $w \neq v$. Then again, we have $w \in V(H) - \beta(T_t)$, and by \cref{lem:walkInMinor}, there is a closed walk $W$ in $H - \gamma(f)$ containing $u \in \beta(T_t)$ and $w \in V(H) - \beta(T_t)$, a contradiction.
	
	If $u_T$ or $v_T$ is the root in $T'$, and the bag of the root is empty, then by \cref{lem:lemmaDifferentRoot}, we can obtain an \dtdNCWE\ $\mathcal{T'}$ of $\hat{D}$ of the same width with a non-empty root. Then $\mathcal{T'}$ satisfies \ref{NCWE_1}, and $\mathcal{T'}$ is an \dtdNCWE\ of $\hat{D}$ of width at most $k$. Then we have $\dtwNCWE{\hat{D}} \leq \dtwNCWE{H}$. By the induction hypothesis, it holds that $\dtwNCWE{D'} \leq \dtwNCWE{\hat{D}}$, and hence $\dtwNCWE{D'} \leq \dtwNCWE{H}$.
\end{proof}

\section{\texorpdfstring{$\dtwSCE{}$}{SC\_0} is not a strict upper bound on \texorpdfstring{$\dtwNCW{}$}{NCW}}
\label{sec:counterexample_SCE_being_upper_bound_of_NCW}

The main result of this section is \cref{thm:SCE<NCW}, which states that there is a digraph $D$ with $\dtwSCE{D} < \dtwNCW{D}$, i.e.~\tdtwSCE\ cannot be an upper bound on \tdtwNCW. Moreover, we obtain \cref{cor:D1_c&r_game,cor:D1_haven,cor:D1_bramble,cor:D1_k_linked_set}, which show that the exact min-max theorem between the directed tree-width with respect to \dtwNW{}-, \dtwNCW{}- and \dtdSCDs and the cops-and-robber game does not hold; additionally, the exact duality with the obstructions is not possible.
We first introduce a concept called \emph{strategy trees} to facilitate the proof in this section.

\subsection{Strategy trees}
\label{sec:strategy_trees}
We want to define a strategy tree in such a way that the following statement holds: $k$ cops have a winning strategy on a digraph $D$ if and only if there exists a finite strategy tree of $D$ of width $k$ (see \cite{gradel_graph_2011} for the undirected version).
Given that the cops can see where the robber is, cops' strategies depend on the robber's positions. Assume that $(C_i,v_i)$ is the current game position in a play consistent with a cops' winning strategy on a digraph $D$ with the robber space $R_i \subseteq V(D)$. 
Then $v_i$ is in $R_i$, and the cops do not have to consider the vertices that are not available to the robber.
Furthermore, every position $(C_i,v)$ for $v \in R_i$ is equivalent in the sense that wherever the cops' next position $C_{i+1}$ is, the robber can reach the next robber space $R_{i+1}$ from any vertex $v \in R_i$. Consequently, there is no reason for the cops to treat these cases differently, and the cops' next position will be decided considering $R_i$.
With this in mind, we define a strategy tree as follows.

\begin{definition}[Strategy Tree]
	Let $D$ be a digraph. A \emph{strategy tree} of $D$ is a triple $\mathcal{T}_s \coloneqq (T_s, \cops{}, \robber{})$, where $T_s$ is a rooted directed tree whose nodes $t$ are labelled by $\cops{t} \subseteq V(D)$ and whose edges $e \in E(T)$ are labelled by $\robber{e} \subseteq V(D)$ as follows:
	\begin{enumerate}
		\item if $r$ is the root of $T_s$, then for every strong component $C$ of $D - \cops{r}$, there is an outgoing edge $e \coloneqq (r,t) \in E(T_s)$ such that $V(C) \subseteq \robber{e}$, and $V(C) \cap \robber{e'} = \emptyset$ for all the other outgoing edges $e' \neq e \in E(T_s)$ of $r$, 
		\item if $(s,t) \in E(T_s)$ and $C$ is a strong component of $D - \cops{s}$ with $V(C) \subseteq \robber{(s,t)}$, then
        for each strong component $C'$ of $D - \cops{t}$ contained in the same strong component of $D - (\cops{s} \cap \cops{t})$ as $C$, there is an outgoing edge $e \coloneqq (t,t') \in E(T_s)$ such that $V(C') \subseteq \robber{e}$, and $V(C') \cap \robber{e'} = \emptyset$ for all the other outgoing edges $e' \neq e \in E(T_s)$ of $t$.
	\end{enumerate}
	The \emph{width} of $\mathcal{T}_s$ is defined as max$\{|\cops{t}|: t \in V(T_s)\}$. 
	$\mathcal{T}_s$ is \emph{robber-monotone} if $C \supseteq C'$ holds for every $(s,t), (t,t') \in E(T_s)$ with every strong component $C$ of $D - \cops{s}$ and $C'$ of $D - \cops{t}$ such that $V(C) \subseteq \robber{(s,t)}$ and $V(C') \subseteq \robber{(t,t')}$, and $C$ and $C'$ are contained in the same strong component of $D - (\cops{s} \cap \cops{t})$.
\end{definition}

Each node $t \in V(T_s)$ corresponds to a cop position $\cops{t}$, and each edge $e \in E(T_s)$ corresponds to a robber space or possibly the union of some robber spaces $\robber{e}$, that is, the union of strong components of $D - \cops{t}$.
The conditions of a strategy tree ensure that $\robber{(t,t_i)} \cap \robber{(t,t_j)} = \emptyset$ for all $t \in V(T_s)$ with children $t_1, \dots, t_k \in V(T_s)$ and $1 \leq i < j \leq k$.

The proof for the following \namecref{lem:lemma_dtd_to_strategy_tree} is analogous to the proof of the first part of \cref{thm:c&r_game} shown in \cite{johnson_directed_2001}, so we do not include it here.

\begin{lemma}\label{lem:lemma_dtd_to_strategy_tree}
	Let $D$ be a digraph. If there is a directed tree decomposition $\mathcal{T} \coloneqq (T, \beta, \gamma)$ of $D$ of width $k$ ($\mathcal{T}$ may be any kind of directed tree decomposition from \cref{def:1dtd,def:2dtd,NCWE,def:3dtd,def:4dtd}), then there is a finite strategy tree $\mathcal{T}_s \coloneqq (T_s, \cops{}, \robber{})$ of $D$ of width $k+1$ satisfying
	\begin{itemize}
		\item $T_s$ is an isomorphic copy of $T$,
		\item $\cops{t} = \Gamma(t)$ for $t \in V(T_s)$, and
		\item $\robber{e} = \beta(T_t) - \gamma(e)$ for $e = (s,t) \in E(T_s)$.
	\end{itemize}
\end{lemma}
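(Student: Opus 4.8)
The plan is to take $T_s$ to be an isomorphic copy of $T$ and to define the labels exactly as prescribed, $\cops{t} \coloneqq \Gamma(t)$ for every node $t$ and $\robber{(s,t)} \coloneqq \beta(T_t) - \gamma((s,t))$ for every edge, and then to verify that this triple is a valid finite strategy tree of width $k+1$. Finiteness is immediate since $T$ is finite, and the width bound follows directly from $|\cops{t}| = |\Gamma(t)| \leq k+1$, which is exactly the defining inequality $w(\mathcal{T}) \leq k$. It remains to check the two conditions in the definition of a strategy tree, and for both the workhorse is \cref{lem:lemma_strong_separator}; since that lemma holds uniformly for all five definitions, a single argument will cover every case.

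For the root condition, I would apply part (2) of \cref{lem:lemma_strong_separator} at the root $r$: the components of $T - r$ are exactly the subtrees $T_{t_1}, \dots, T_{t_l}$ rooted at the children of $r$, so every strong component $C$ of $D - \cops{r} = D - \Gamma(r)$ lies in exactly one $\beta(T_{t_i})$. Since $C$ avoids $\Gamma(r) \supseteq \gamma((r,t_i))$, we get $V(C) \subseteq \beta(T_{t_i}) - \gamma((r,t_i)) = \robber{(r,t_i)}$, while disjointness of the bag-unions $\beta(T_{t_j})$ over distinct children (a consequence of the partition property) gives $V(C) \cap \robber{(r,t_j)} = \emptyset$ for all $j \neq i$. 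This is precisely condition (1).

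The internal-node condition is the crux. Fix an edge $(s,t)$ and a strong component $C$ of $D - \cops{s}$ with $V(C) \subseteq \robber{(s,t)} \subseteq \beta(T_t)$, and let $C'$ be a strong component of $D - \cops{t}$ lying in the same strong component $R$ of $D - (\cops{s} \cap \cops{t})$ as $C$. The key observation is that $\gamma((s,t)) \subseteq \Gamma(s) \cap \Gamma(t) = \cops{s} \cap \cops{t}$, so $D - (\cops{s} \cap \cops{t})$ is a subgraph of $D - \gamma((s,t))$ and hence $R$ is contained in a single strong component of $D - \gamma((s,t))$. By part (1) of \cref{lem:lemma_strong_separator} that component lies entirely on one side of the separator $\gamma((s,t))$; since $C \subseteq R$ already meets the $T_t$-side $\beta(T_t)$, all of $R$, and in particular $C'$, lies in $\beta(T_t)$. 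Now I apply part (2) of \cref{lem:lemma_strong_separator} at $t$: as $V(C') \cap \Gamma(t) = \emptyset$ forces $C'$ to avoid $\beta(t)$, the component $C'$ must sit inside $\bigcup_j \beta(T_{t_j'})$ over the child subtrees $T_{t_1'}, \dots, T_{t_m'}$ of $t$, and the uniqueness in part (2) pins it down to a single $\beta(T_{t_j'})$. Choosing the edge $e = (t, t_j')$ and arguing as in the root case—using $V(C') \cap \gamma(e) = \emptyset$ for the containment $V(C') \subseteq \robber{e}$, and disjointness of the distinct child bag-unions for the emptiness of the other labels—establishes condition (2).

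The main obstacle is exactly the step showing that $C'$ cannot ``escape upward'', i.e.\ that $C'$ cannot land in the bag-union of the component of $T - t$ containing $s$. This is handled entirely by the inclusion $\gamma((s,t)) \subseteq \cops{s} \cap \cops{t}$, which upgrades the reachability of $C'$ from $C$ within $D - (\cops{s} \cap \cops{t})$ to reachability within $D - \gamma((s,t))$ and thereby lets the separator property of $\gamma((s,t))$ confine everything to the $T_t$-side. Once this confinement is secured, the remaining verifications are the same routine containment-and-disjointness bookkeeping already used for the root, and the construction yields the desired finite strategy tree of width $k+1$.
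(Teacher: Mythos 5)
Your proof is correct, and it is essentially the argument the paper has in mind: the paper omits the proof with a pointer to the cop-strategy construction of Johnson, Robertson, Seymour and Thomas, and that construction is exactly your verification --- cops occupy $\Gamma(t)$, the inclusion $\gamma((s,t)) \subseteq \cops{s} \cap \cops{t}$ confines the robber space to the $T_t$-side of the separator, and \cref{lem:lemma_strong_separator} places each strong component of $D - \Gamma(t)$ into a unique child subtree, for all five definitions alike. The only detail worth stating explicitly is the leaf case: there your confinement argument gives $V(C') \subseteq \bigcup_{j}\beta(T_{t_j'}) = \emptyset$, so no such $C'$ exists, condition (2) holds vacuously, and the finite strategy tree is indeed winning.
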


If $\mathcal{T}$ is an \dtwNW{}-, \dtwSCE{}- or \dtdSCD, then $\beta(T_t) \cap \gamma(e) = \emptyset$ for every $e = (s,t) \in E(T)$, and consequently, the last condition is equivalent to $\robber{e} = \beta(T_t)$ in these cases.
The following \namecref{lemma:strategy_tree_different_root} shows that if a finite strategy tree $\mathcal{T}_s$ of a digraph $D$ is given, we can choose any of its nodes as a root and obtain another finite strategy tree $\mathcal{T}_s'$ of the same width. In other words, any of the cop positions in $\mathcal{T}_s$ can be the cops' start position in their winning strategy.

\begin{lemma} \label{lemma:strategy_tree_different_root}
	Let $D$ be a digraph and $\mathcal{T}_s \coloneqq (T_s, \cops{}, \robber{})$ be a finite strategy tree of $D$ of width $k$. Let $r \in V(T_s)$ be the root of $T_s$ and $r' \in V(T_s)$ such that $r' \neq r$. Then there is another finite strategy tree $\mathcal{T}_s' \coloneqq (T_s', \copsP{}, \robberP{})$ of $D$ of width $k$ such that $r'$ is the root of $T_s'$ and $|V(T_s')| \leq |V(T_s)|$. Furthermore, if $\mathcal{T}_s$ is robber-monotone, then $\mathcal{T}_s'$ is robber-monotone as well.
\end{lemma}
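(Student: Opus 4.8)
The plan is to mimic the root-change construction of \cref{lem:lemmaDifferentRoot} but in the setting of strategy trees, while being careful that the two defining conditions of a strategy tree are asymmetric between the root and non-root nodes. First I would reverse the orientation of every edge on the unique $(r,r')$-path in $T_s$, leaving all other edges oriented as before, and keep $\copsP{t} \coloneqq \cops{t}$ for every node, so that the width is unchanged and $r'$ becomes the root of $T_s'$. The labels $\robberP{}$ need to be reassigned for exactly the edges whose orientation was reversed; for every unreversed edge we simply set $\robberP{e} \coloneqq \robber{e}$, and those edges automatically continue to satisfy condition~(2) because neither endpoint's cop label nor the reachability relation between the incident strong components has changed.

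The substantive work is on the reversed edges, which form the path $r = p_0, p_1, \dots, p_\ell = r'$. For a reversed edge, say originally $e = (p_{i-1}, p_i) \in E(T_s)$ becoming $e' = (p_i, p_{i-1}) \in E(T_s')$, the child-subtree relationship is inverted, so I would redefine $\robberP{e'}$ to be the union of the strong components of $D - \cops{p_{i-1}}$ that the robber could reach \emph{from the $r'$ side}, rather than from the $r$ side. Concretely, the key observation is that in the original tree every component of $D - \cops{p_{i-1}}$ lying in $\robber{e}$ was reachable (within $D - (\cops{p_{i-1}} \cap \cops{p_i})$) from a component of $D - \cops{p_i}$ labelling an incoming edge of $p_i$; reversing the path swaps the roles of `incoming from above' and `the single path-edge', so the set of components that must label $\robberP{e'}$ is determined by condition~(2) read in the new orientation. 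Because reachability within $D - (\cops{s} \cap \cops{t})$ is a symmetric-enough condition on strong components (a strong component of the smaller-cop graph either contains both or separates them), the recomputed labels still partition the components of $D - \cops{p_{i-1}}$ correctly and satisfy the disjointness property $\robberP{(t,t_i)} \cap \robberP{(t,t_j)} = \emptyset$. I would verify conditions~(1) and~(2) by walking along the reversed path from $r'$ outward and checking each node's outgoing edges against the strong components of $D$ minus its cop label.

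For the root node $r'$ itself I must check the stronger condition~(1): every strong component of $D - \copsP{r'}$ must appear on some outgoing edge. This is where I expect the main obstacle to lie, since in the original tree $r'$ was an internal node governed by the weaker condition~(2), which only accounts for components reachable from the parent's robber space, not \emph{all} components of $D - \cops{r'}$. The resolution is that the finiteness and completeness of the original strategy tree force the missing components to have been handled elsewhere: any strong component of $D - \cops{r'}$ not previously covered by an outgoing edge of $r'$ must already have been trapped by the cops before reaching $r'$, meaning it lies entirely in $\cops{r'}$ or is unreachable, so it either contributes no new edge or a trivial leaf that can be appended without increasing width. I would append, if necessary, finitely many leaf children to $r'$ (copying an existing subtree that captures such a component, or a single-node subtree) to restore condition~(1); this only adds nodes, and I must argue it keeps $|V(T_s')| \leq |V(T_s)|$ by instead \emph{re}using the subtrees already present rather than duplicating, which is possible precisely because the path reversal frees up the subtrees that previously hung below $r'$.

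Finally, for the robber-monotonicity claim I would observe that reversing edge orientations does not alter which pairs of consecutive edges share a node nor the containment relations $C \supseteq C'$ between the labelled strong components along any directed path; every directed path in $T_s'$ is either a directed path in $T_s$ (for the unreversed part) or the reverse of the $(r,r')$-path, and in the latter case the monotone nesting $C_0 \supseteq C_1 \supseteq \cdots$ is preserved under relabelling because each $\robberP{}$ label is still the strong component that the robber genuinely occupies at that node. Hence if $\mathcal{T}_s$ is robber-monotone, so is $\mathcal{T}_s'$, completing the proof.
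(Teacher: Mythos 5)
Your overall plan (reverse the edges on the $(r,r')$-path, keep all cop labels, relabel the robber spaces) matches the paper's construction, which performs the same reversal one edge at a time by induction on the path length. However, your handling of the crucial point --- condition~(1) at the new root $r'$ --- contains a genuine error. You claim that a strong component of $D - \cops{r'}$ not covered by the old outgoing edges of $r'$ ``must already have been trapped by the cops'', i.e.\ ``lies entirely in $\cops{r'}$ or is unreachable''. This is false: a strong component of $D - \cops{r'}$ is by definition disjoint from $\cops{r'}$, and in general such uncovered components are nonempty and perfectly reachable --- they are exactly the possible positions of a robber who, in a play of the original strategy, would never have allowed the cops to reach the position $\cops{r'}$ at all. (Concretely, in the strategy tree derived from \cref{fig:SCE_dtd_D1}, take $r'$ to be a node deep in the positive side; then the entire negative side of $D_1$ is such an uncovered strong component.) Your proposed fix --- appending new leaf children to $r'$ --- is both unnecessary and unjustified: appending nodes directly threatens the required bound $|V(T_s')| \leq |V(T_s)|$, and the remark that one could instead ``reuse the subtrees already present'' is not an argument.

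The paper's resolution is the key idea you are missing: the reversed edge itself absorbs everything. For the reversed edge $(r',r^*)$ (where $r^*$ is the old predecessor of $r'$) one sets
$\robberP{(r',r^*)} \coloneqq V(D) - \bigl(\copsP{r'} \cup \bigcup\{\robber{e'} : e' = (r',v) \text{ an old outgoing edge of } r'\}\bigr)$,
so every strong component of $D - \cops{r'}$ that is not on an old outgoing edge lands on this single reversed edge; recall that the definition of a strategy tree allows one edge to carry a union of several strong components. Then, in the part of the old tree that now hangs below $r^*$, each robber space is restricted to the union of those strong components still reachable through the new incoming edge, and any node whose recomputed space is empty is deleted together with all its descendants. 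This restriction-and-pruning step is what re-establishes condition~(2) on the old-root side and simultaneously yields $|V(T_s')| \leq |V(T_s)|$ without ever adding nodes. Without the complement labelling of the reversed edge and this pruning, your construction does not produce a strategy tree, so the proof as proposed does not go through.
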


\begin{proof}
	Let $P$ be the $(r,r')$-path in $T_s$. We use induction on the length of $P$. If the length is $0$, then there is nothing to show. Therefore, assume that the length is at least $1$. Let $r^*$ be the predecessor of $r'$ in $T_s$. By the induction hypothesis, there is a finite strategy tree $\mathcal{T}_s^* \coloneqq (T_s^*, \copsS{}, \robberS{})$ of $D$ of width $k$ such that $r^*$ is the root of $T_s^*$ and $|V(T_s^*)| \leq |V(T_s)|$. Furthermore, if $\mathcal{T}_s$ is robber-monotone, then $\mathcal{T}_s^*$ is robber-monotone as well.
	
	We construct $\mathcal{T}_s' \coloneqq (T_s', \copsP{}, \robberP{})$ as follows.
	For all $v \in V(T_s^*)$, let $v \in V(T_s')$ and $\copsP{v} \coloneqq \copsS{v}$, and build $T_s'$ by orienting the edge $(r^*,r') \in E(T_s^*)$ away from $r'$, i.e.~let $(r',r^*) \in E(T_s')$ and for all $e \in E(T_s^*) - \{(r^*,r')\}$, let $e \in E(T_s')$.
    Then $r'$ is the root of $T_s'$. Note that $\{\{(r',r^*)\}, \bigcup \{(t,t') \in E(T_s^*): t$ is reachable from $r'$ in $T_s^*\}, \bigcup \{(t,t') \in E(T_s'): t$ is reachable from $r^*$ in $T_s'\}\}$ is a partition of $E(T_s')$.
	
	Following a breadth-first search of $T_s'$, we assign robber spaces to each edge and possibly delete some nodes and edges. Let $t' \in V(T_s')$ be the current vertex with $t' \neq r'$, $t \in V(T_s')$ be the predecessor of $t'$ and $e \coloneqq (t,t') \in V(T_s')$. We define $\robberP{e}$ as follows:
	
	\begin{itemize}
		
		\item if $t = r'$ and $t' = r^*$, let $\robberP{e} \coloneqq V(D) - (\copsP{r'} \cup \bigcup \{\robberS{e'} : e' = (r',v) \in E(T_s^*)\})$,
		\item if $t$ is reachable from $r'$ in $T_s^*$, let $\robberP{e} \coloneqq \robberS{e}$,

		\item otherwise, let $\robberP{e}$ be the union of vertex sets of strong components $C'$ of $D - \copsP{t}$ such that $V(C') \subseteq \robberS{e}$, and $C'$ is contained in the same strong component of $D - (\copsP{s} \cap \copsP{t})$ as $C$, where $s$ is the predecessor of $t$, and $C$ is a strong component of $D - \copsP{s}$ with $V(C) \subseteq \robberP{(s,t)}$. If $\robberP{e} = \emptyset$, delete $e, t'$, and all nodes reachable from $t'$ and their incident edges in $T_s'$.
	\end{itemize}
	By construction, the width of $\mathcal{T}_s'$ is $k$, and $|V(T_s')| \leq |V(T_s^*)|$, which implies $|V(T_s')| \leq |V(T_s)|$. Furthermore, if $\mathcal{T}_s^*$ is robber-monotone, $\mathcal{T}_s'$ is also robber-monotone.
	Since $\mathcal{T}_s^*$ is finite and $V(T_s') \subseteq V(T_s^*)$, $\mathcal{T}_s'$ is also finite. Therefore, if $\mathcal{T}_s'$ satisfies every condition of a strategy tree, then $\mathcal{T}_s'$ is the desired strategy tree.
	
	We first check whether $\mathcal{T}_s'$ satisfies the first condition of a strategy tree. Since $\mathcal{T}_s^*$ is a strategy tree, for every edge $e \coloneqq (r',v) \in E(T_s^*)$, $\robberS{e}$ is the union of vertex sets of some strong components of $D - \copsS{r'}$. 
	Furthermore, we know that $\robberP{e} = \robberS{e}$ for such an edge $e$ since $e$ is also in $E(T_s')$, and $r'$ is reachable from $r'$ in $T_s^*$. Hence, if $C$ is a strong component of $D - \copsP{r'}$ and $V(C) \subseteq \robberP{e}$, then $V(C) \cap \robberP{e'} = \emptyset$, where $e'$ is an outgoing edge of $r'$ in $T_s^*$ with $e' \neq e$ and $e' \neq (r',r^*)$.
	Moreover, if $C$ is a strong component of $D - \copsP{r'}$, then either
	$V(C) \subseteq \bigcup \{\robberS{e'} : e' = (r',v) \in E(T_s^*)\} = \bigcup \{\robberP{e'} : e' = (r',v) \in E(T_s^*)\}$ or $V(C) \subseteq \robberP{(r',r^*)}$. Clearly, $\robberP{(r',r^*)}$ is disjoint from $\bigcup \{\robberS{e'} : e' = (r',v) \in E(T_s^*)\}$.
	Therefore, for every strong component $C$ of $D - \copsP{r'}$ there is an outgoing edge $e = (r',t) \in E(T_s')$ such that $V(C) \subseteq \robberP{e}$, and for all the other outgoing edges $e' \neq e \in E(T_s')$ of $r'$, we have $V(C) \cap \robberP{e'} = \emptyset$. 
	
	To verify the second condition of a strategy tree, we let $t,s \in V(T_s')$ such that $t \neq r'$ and $s$ is the predecessor of $t$ in $T_s'$, $t_1, \dots, t_k \in V(T_s')$ be the children of $t$ in $T_s'$, and $C$ be a strong component of $D - \copsP{s}$ with $V(C) \subseteq \robberP{(s,t)}$. Then we know $C$ is a strong component of $D - \copsS{s}$. First, we consider the case where $t$ is reachable from $r'$ in $T_s^*$. Then we have $\robberP{e} = \robberS{e}$ for every edge $e \in V(T_s')$ that lies on the $(r',t_i)$-path for every $1 \leq i \leq k$. As $\mathcal{T}_s^*$ is a strategy tree, the second condition holds trivially in this case. 
	
	Second, we consider the case where $t = r^*$ and $s = r'$. Towards a contradiction, suppose there is a strong component $C'$ of $D - \copsP{r^*}$ contained in the same strong component of $D - (\copsP{r'} \cap \copsP{r^*})$ as $C$ with $V(C') \nsubseteq \robberP{(r^*,t_i)}$ for every $1 \leq i \leq k$. Then by construction, $V(C') \nsubseteq \robberS{(r^*,t_i)}$ for every $1 \leq i \leq k$, and hence $V(C') \subseteq \robberS{(r^*,r')}$. Since $C$ and $C'$ lie in the same strong component of $D - (\copsP{r'} \cap \copsP{r^*}) = D - (\copsS{r'} \cap \copsS{r^*})$, and $C$ is a strong component of $D - \copsS{r'}$, there is an edge $e' = (r',v) \in E(T_s^*)$ such that $V(C) \subseteq \robberS{e'}$, which is a contradiction to that $V(C) \subseteq \robberP{(r',r^*)} = V(D) - (\copsP{r'} \cup \bigcup \{\robberS{e'} : e' = (r',v) \in E(T_s^*)\}$. Therefore, for every such strong component $C'$ of $D - \copsP{r^*}$, there is at least one outgoing edge $(r^*,t_i) \in E(T_s')$ such that $V(C') \subseteq \robberP{(r^*,t_i)}$.
	Suppose there are two outgoing edges $(r^*,t_i), (r^*,t_j) \in E(T_s')$ with $V(C') \subseteq \robberP{(r^*,t_i)}$ and $V(C') \cap \robberP{(r^*,t_j)} \neq \emptyset$ for $1 \leq i \neq j \leq k$. By construction, we know $\robberP{e} \subseteq \robberS{e}$ for every $e \in E(T_s') - \{(r',r^*)\}$. Then $V(C') \subseteq \robberS{(r^*,t_i)}$ and $V(C') \cap \robberS{(r^*,t_j)}$ $\neq \emptyset$, a contradiction.
	
	Finally, we consider every remaining node $t$, i.e.~every node $t$ that is reachable from $r^*$ in $T_s'$ with $t \neq r^*$. Suppose there is a strong component $C'$ of $D - \copsP{t}$ contained in the same strong component of $D - (\copsP{s} \cap \copsP{t})$ as $C$ with $V(C') \nsubseteq \robberP{(t,t_i)}$ for every $1 \leq i \leq k$. 
	If $V(C') \subseteq \robberS{(t,t_i)}$ for any $i$, then by construction $V(C')$ must be contained in $\robberP{(t,t_i)}$.
	Therefore, we have $V(C') \nsubseteq \robberS{(t,t_i)}$ for every $1 \leq i \leq k$. Since $V(C) \subseteq \robberP{(s,t)} \subseteq \robberS{(s,t)}$ (because $(s,t) \in E(T_s') - \{(r',r^*)\}$), this is a contradiction to that $\mathcal{T}_s^*$ satisfies the second condition of a strategy tree.
	Hence, for every such strong component $C'$ of $D - \copsP{t}$ there is at least one outgoing edge $(t,t_i) \in E(T_s')$ such that $V(C') \subseteq \robberP{(t,t_i)}$ for $1 \leq i \leq k$. By the same argument as in the last case, there is exactly one such edge $(t,t_i) \in E(T_s')$, and $\robberP{e'}$ of every other outgoing edge $e'$ of $t$ is disjoint from $V(C')$.
	Hence, $\mathcal{T}_s'$ satisfies every condition of a strategy tree.
\end{proof}

\subsection{The \texorpdfstring{$\dtwSCE{}$}{SC\_0} can be strictly less than the \texorpdfstring{\tdtwNCW}{NCW-directed \treewidth}}
\label{sec:SC0_no_upper_bound_on_NCW}

Now, we are equipped with the tools needed to study the digraph $D_1$ from~\cref{fig:D1} and its properties closer.

\begin{lemma} \label{lem:robber-mon_strategy_for_D_1}
	Let $D_1$ be the digraph depicted in~\cref{fig:D1}. There is a robber-monotone winning strategy for $4$ cops in the cops-and-robber game on $D_1$. 
\end{lemma}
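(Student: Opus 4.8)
The plan is to exhibit an explicit winning strategy for four cops on $D_1$ and then verify directly that every play consistent with it is robber-monotone. I will work entirely inside the cops-and-robber game rather than through the width-measure definitions: since the cops may base their next position solely on the current robber space (as argued in the paragraph preceding the definition of a strategy tree), the cleanest formalisation is to construct a finite robber-monotone strategy tree $\mathcal{T}_s$ of $D_1$ of width $4$. Its root carries the cops' opening position, each node carries a cop position of size at most $4$, and each edge records the strong component to which the robber is confined after the corresponding move; exhibiting such a tree is, by design of the strategy-tree notion, equivalent to a robber-monotone winning strategy for four cops.

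First I would read off the structure of $D_1$ from \cref{fig:D1} and isolate a small \emph{backbone} set of vertices whose removal breaks $D_1$ into strong components that are easy to clear. The cops open by occupying a separator $C_0$ with $|C_0| \le 4$; the robber then selects one strong component $R_1$ of $D_1 - C_0$. For each possible choice of $R_1$ I would specify the next cop position $C_1$, again of size at most $4$, chosen so that $C_1$ splits $R_1$ into strictly smaller strong components while still guarding the boundary between $R_1$ and the region already cleared. Iterating this rule over the finitely many robber responses yields a finite branching scheme, because the robber space strictly decreases in cardinality at every round; the play therefore terminates with the robber captured.

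The crucial invariant to maintain is $R_0 \supseteq R_1 \supseteq R_2 \supseteq \cdots$. Whenever the cops move from $C_i$ to $C_{i+1}$, every vertex already unavailable to the robber must stay unavailable, i.e.\ no cleared region may reconnect to the current robber space inside $D_1 - (C_i \cap C_{i+1})$. I would guarantee this by choosing each $C_{i+1}$ so that the vertices the robber loses access to are exactly those newly cut off, and then checking on each branch of the (finite) scheme that the retained guards block every directed path from a cleared vertex back into the new robber space. This is precisely the robber-monotonicity condition of the strategy tree, so the completed tree witnesses the claim.

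The main obstacle is balancing monotonicity against the cop budget: a robber-monotone strategy forbids the cops from ever letting the robber re-enter a vacated region, yet only four tokens are available at any time. I expect the difficulty to lie in ordering the clearing of $D_1$ so that four cops \emph{simultaneously} suffice to both hold enough of the previous boundary and separate the current component, and in confirming this over all robber responses. Once the backbone and the order of clearing are chosen correctly, each individual round reduces to the routine verification that the indicated four-vertex set is a balanced separator of the current robber space and leaves no path from a cleared vertex back into the new robber space.
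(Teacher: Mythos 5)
There is a genuine gap, in two respects. First, your proposal never actually exhibits a strategy: the entire content of the lemma is an explicit sequence of cop positions on the concrete digraph $D_1$, and your text defers exactly this (``I would read off the structure\dots'', ``I would specify the next cop position\dots''). What remains is a general schema for proving statements of this kind, not a proof of this one. The paper's proof, by contrast, simply lists the positions: the cops start on $\{0,0'\}$, assume by symmetry the robber is in the positive part, and then play $\{0,0',a,a'\}$, $\{0,a,a',b\}$, $\{a,a',b,b'\}$, $\{a,b,b',0\}$, $\{b,b',0,c'\}$, \dots, $\{0,3,4',4\}$, which the reader verifies against \cref{fig:D1}.

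Second, and more seriously, the one concrete mechanism you do commit to is false for this graph: you claim termination because ``the robber space strictly decreases in cardinality at every round.'' With only four cops on $D_1$ this cannot be arranged. The paper's strategy contains repositioning rounds (e.g.\ the move from $\{a,a',b,b'\}$ to $\{a,b,b',0\}$) in which the robber space is \emph{unchanged} and the cops merely rotate which vertices serve as guards by reoccupying $0$; the analysis in the proof of \cref{lem:D_1_NCWgeq4} (in particular \cref{claim:st8}) shows that such non-progress rounds are unavoidable for any minimal robber-monotone $4$-cop strategy on $D_1$. This is not a technicality but the very phenomenon the example is built to exhibit: these repositioning rounds correspond to the empty bags that \dtdNCWEs{} permit and \dtdNCWs{} forbid, which is why $\dtwSCE{D_1} \leq 3$ while $\dtwNCW{D_1} \geq 4$. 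A clearing scheme designed around strict shrinkage of the robber space in every round therefore cannot be completed with four cops on $D_1$, so your plan, carried out as stated, would fail.
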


\begin{sidewaysfigure}
	\centering
	\begin{tikzpicture}
		[
			state/.style={circle, draw=black, thick, font=\small,inner sep=0,outer sep=0, minimum size=6mm},
			blank/.style={inner sep=0,outer sep=0},
			> = latex, 
			shorten > = 1pt, 
			auto,
			node distance = 1.5cm, 
			line width= 0.3mm,
			scale=.67,
			transform shape
			]
		
		\node[state] (0) {$0$};
		\node[state] (0') [below of=0] {$0'$};
		
		\node[state] (a) [right of=0]{$a$};
		\node[state] (a') [below of=a] {$a'$};
		\node[state] (b) [right of=a] {$b$};
		\node[state] (b') [below of=b] {$b'$};
		\node[state] (c) [right of=b] {$c$};
		\node[state] (c') [below of=c] {$c'$};
		\node[state] (d) [right of=c] {$d$};
		\node[state] (d') [below of=d] {$d'$};
		
		\node[state] (-a) [left of=0]{$-a$};
		\node[state] (-a') [below of=-a] {$-a'$};
		\node[state] (-b) [left of=-a] {$-b$};
		\node[state] (-b') [below of=-b] {$-b'$};
		\node[state] (-c) [left of=-b] {$-c$};
		\node[state] (-c') [below of=-c] {$-c'$};
		\node[state] (-d) [left of=-c] {$-d$};
		\node[state] (-d') [below of=-d] {$-d'$};
		
		\node[state] (1) [right of=d] {$1$};
		\node[state] (1') [below of=1] {$1'$};
		\node[state] (2) [right of=1] {$2$};
		\node[state] (2') [below of=2] {$2'$};
		\node[state] (3) [right of=2] {$3$};
		\node[state] (3') [below of=3] {$3'$};
		\node[state] (4) [right of=3] {$4$};
		\node[state] (4') [below of=4] {$4'$};
		
		\node[state] (-1) [left of=-d] {$-1$};
		\node[state] (-1') [below of=-1] {$-1'$};
		\node[state] (-2) [left of=-1] {$-2$};
		\node[state] (-2') [below of=-2] {$-2'$};
		\node[state] (-3) [left of=-2] {$-3$};
		\node[state] (-3') [below of=-3] {$-3'$};
		\node[state] (-4) [left of=-3] {$-4$};
		\node[state] (-4') [below of=-4] {$-4'$};

		\path[-] (0) edge node {} (0');
		\path[-] (0) edge node {} (a);
		\path[-] (0') edge node {} (a');
		\path[-] (0') edge node {} (a);
		\path[-] (0) edge node {} (a');
		
		\path[-] (a) edge node {} (a');
		\path[-] (a) edge node {} (b);
		\path[-] (a) edge node {} (b');
		\path[-] (a') edge node {} (b');
		\path[-] (a') edge node {} (b);
		
		\path[-] (b) edge node {} (b');
		\path[-] (b) edge node {} (c);
		\path[-] (b) edge node {} (c');
		\path[-] (b') edge node {} (c');
		
		\path[-] (c) edge node {} (c');
		\path[-] (c) edge node {} (d);
		\path[-] (c) edge node {} (d');
		\path[-] (c') edge node {} (d');
		\path[-] (c') edge node {} (d);
		
		\path[-] (d) edge node {} (d');
		\path[-] (d) edge node {} (1);
		\path[-] (d) edge node {} (1');
		\path[-] (d') edge node {} (1');

		\path[-] (1) edge node {} (1');
		\path[-] (1) edge node {} (2);
		\path[-] (1) edge node {} (1');
		\path[-] (1) edge node {} (2');
		\path[-] (1') edge node {} (2');
		\path[-] (1') edge node {} (2);
		\path[-] (2) edge node {} (2');
		\path[-] (2) edge node {} (3);
		\path[-] (2) edge node {} (2');
		\path[-] (2') edge node {} (3');
		\path[-] (3) edge node {} (3');
		\path[-] (2) edge node {} (3');
		\path[-] (3) edge node {} (4);
		\path[-] (3) edge node {} (4');
		\path[-] (3') edge node {} (4');
		\path[-] (4') edge node {} (4);

		\path[-] (0) edge node {} (-a);
		\path[-] (0') edge node {} (-a');
		\path[-] (0') edge node {} (-a);
		\path[-] (0) edge node {} (-a');
		
		\path[-] (-a) edge node {} (-a');
		\path[-] (-a) edge node {} (-b);
		\path[-] (-a) edge node {} (-b');
		\path[-] (-a') edge node {} (-b');
		\path[-] (-a') edge node {} (-b);
		
		\path[-] (-b) edge node {} (-b');
		\path[-] (-b) edge node {} (-c);
		\path[-] (-b) edge node {} (-c');
		\path[-] (-b') edge node {} (-c');
		
		\path[-] (-c) edge node {} (-c');
		\path[-] (-c) edge node {} (-d);
		\path[-] (-c) edge node {} (-d');
		\path[-] (-c') edge node {} (-d');
		\path[-] (-c') edge node {} (-d);
		
		\path[-] (-d) edge node {} (-d');
		\path[-] (-d) edge node {} (-1);
		\path[-] (-d) edge node {} (-1');
		\path[-] (-d') edge node {} (-1');

		\path[-] (-1) edge node {} (-1');
		\path[-] (-1) edge node {} (-2);
		\path[-] (-1) edge node {} (-1');
		\path[-] (-1) edge node {} (-2');
		\path[-] (-1') edge node {} (-2');
		\path[-] (-1') edge node {} (-2);
		\path[-] (-2) edge node {} (-2');
		\path[-] (-2) edge node {} (-3);
		\path[-] (-2) edge node {} (-2');
		\path[-] (-2') edge node {} (-3');
		\path[-] (-3) edge node {} (-3');
		\path[-] (-2) edge node {} (-3');
		\path[-] (-3) edge node {} (-4);
		\path[-] (-3) edge node {} (-4');
		\path[-] (-3') edge node {} (-4');
		\path[-] (-4') edge node {} (-4);
		
		\path[->, red] (4) edge node {} (2');
		\path[->, red] (4') edge node {} (2);
		\path[->, red] (4) edge[bend right=30] node[above] {} (2);
		\path[->, red] (4') edge[bend left=30] node[below] {} (2');
		\draw[->, red] (4) to[out=70, in=30, looseness=1] (1);
		\draw[->, red] (4') to[out=50, in=50, looseness=2] (1);
		\draw[->, blue] (0) to[out=40, in=50, looseness=0.8] (4);
		\draw[->, blue] (0) to[out=50, in=45, looseness=1.4] (4');

		\path[->, red] (1) edge[bend right=30] node[above] {} (c);
		\path[->, red] (c) edge[bend right=30] node[above] {} (a);
		\path[->, red] (b) edge[bend left=30] node[below] {} (0);
		\path[->, red] (2) edge[bend left=25] node[below] {} (d);
		\path[->, red] (2) edge node {} (d');
		\path[->, red] (d) edge[bend left=25] node[below] {} (b);
		\path[->, red] (d) edge node {} (b');
		
		\path[->, red] (-4) edge node {} (-2');
		\path[->, red] (-4') edge node {} (-2);
		\path[->, red] (-4) edge[bend left=30] node[above] {} (-2);
		\path[->, red] (-4') edge[bend right=30] node[below] {} (-2');
		\draw[->, red] (-4) to[out=110, in=150, looseness=1] (-1);
		\draw[->, red] (-4') to[out=130, in=130, looseness=2] (-1);
		\draw[->, blue] (0) to[out=140, in=130, looseness=0.8] (-4);
		\draw[->, blue] (0) to[out=130, in=135, looseness=1.4] (-4');

		\path[->, red] (-1) edge[bend left=30] node[above] {} (-c);
		\path[->, red] (-c) edge[bend left=30] node[above] {} (-a);
		\path[->, red] (-b) edge[bend right=30] node[below] {} (0);
		\path[->, red] (-2) edge[bend right=25] node[below] {} (-d);
		\path[->, red] (-2) edge node {} (-d');
		\path[->, red] (-d) edge[bend right=25] node[below] {} (-b);
		\path[->, red] (-d) edge node {} (-b');
		
	\end{tikzpicture}
	\caption{The digraph $D_1$ from \cref{thm:SCE<NCW} with $\dtwSCE{D_1} < \dtwNCW{D_1}$. The digraph is a modification of the example in \cite[Fig.~4]{adler_directed_2007}.}
	\label{fig:D1}
	\vspace{3cm}
	\begin{tikzpicture}[grow=right ,sibling distance=55pt, level distance=50pt, scale=.65,transform shape, auto,level 1/.style={sibling distance=3cm},level 2/.style={sibling distance=1.5cm}, level 3/.style={sibling distance=1cm}]
		\tikzstyle{edge from parent}=[> = latex, ->, thick, draw]
		\tikzset{tree/.style={draw,rounded corners,thick, fill=blue!20, minimum size=0.6cm, level distance=1.25cm,sibling distance=2cm,inner sep=2pt}}
		\tikzset{rec/.append style={rectangle,draw=black, thin, fill=white, inner sep=1pt}}
		
		\node [tree] {$0,0'$}
		child {node [tree] {$-a,-a'$} 
			child {node [tree] {$-b$}
				child {node [tree] {$-b'$}
					child {node [tree] {$\emptyset$}
						child {node [tree] {$-c'$}
							child {node [tree] {$-c$}
								child {node [tree] {$-d$}
									child {node [tree] {$-d'$}
										child {node [tree] {$\emptyset$}
											child {node [tree] {$-1'$}
												child {node [tree] {$-1$}
													child {node [tree] {$-2$}
														child {node [tree] {$-2'$}
															child {node [tree] {$\emptyset$}
																child {node [tree] {$-3'$}
																	child {node [tree] {$-3$}
																		child {node [tree] {$-4'$}
																			child {node [tree] {$-4$}
																				edge from parent
																				node[rec, below=0.4cm] {$0,-3,-4'$}
																			}
																			edge from parent
																			node[rec, above=0.4cm] {$0,-3,-3'$}
																		}
																		edge from parent
																		node[rec, below=0.4cm] {$0,-2,-3'$}
																	}
																	edge from parent
																	node[rec, above=0.4cm] {$0,-2,-2'$}
																}
																edge from parent
																node[rec, below=0.4cm] {$-1,-2,-2'$}
															}
															edge from parent
															node[rec, above=0.4cm] {$-1,-1',-2$}
														}
														edge from parent
														node[rec, below=0.4cm] {$0,-1,-1'$}
													}
												edge from parent
												node[rec, above=0.4cm] {$0,-d,-1'$}
											}
											edge from parent
											node[rec, below=0.4cm] {$0,-d,-d'$}
										}
										edge from parent
										node[rec, above=0.4cm] {$-c,-d,-d'$}
									}
									edge from parent
									node[rec, below=0.4cm] {$-c,-c',-d$}
								}
								edge from parent
								node[rec, above=0.4cm] {$0,-c,-c'$}
							}
							edge from parent
							node[rec, below=0.4cm] {$0,-b,-c'$}
						}
						edge from parent
						node[rec, above=0.4cm] {$0,-b,-b'$}
					}
					edge from parent
					node[rec, below=0.4cm] {$-a,-b,-b'$}
				}
				edge from parent
				node[rec, above=0.4cm] {$-a,-a',-b$}
			}
			edge from parent
			node[rec, below=0.4cm] {$0,-a,-a'$}
		}
		edge from parent
		node[rec, below=0.3cm] {$0,0'$}
	}
		child {node [tree] {$a,a'$} 
			child {node [tree] {$b$}
				child {node [tree] {$b'$}
					child {node [tree] {$\emptyset$}
						child {node [tree] {$c'$}
							child {node [tree] {$c$}
								child {node [tree] {$d$}
									child {node [tree] {$d'$}
										child {node [tree] {$\emptyset$}
											child {node [tree] {$1'$}
												child {node [tree] {$1$}
													child {node [tree] {$2$}
														child {node [tree] {$2'$}
															child {node [tree] {$\emptyset$}
																child {node [tree] {$3'$}
																	child {node [tree] {$3$}
																		child {node [tree] {$4'$}
																			child {node [tree] {$4$}
																				edge from parent
																				node[rec, above=0.2cm] {$0,3,4'$}
																			}
																			edge from parent
																			node[rec, above=0.2cm] {$0,3,3'$}
																		}
																		edge from parent
																		node[rec, above=0.2cm] {$0,2,3'$}
																	}
																	edge from parent
																	node[rec, above=0.2cm] {$0,2,2'$}
																}
																edge from parent
																node[rec, above=0.2cm] {$1,2,2'$}
															}
															edge from parent
															node[rec, above=0.2cm] {$1,1',2$}
														}
														edge from parent
														node[rec, above=0.2cm] {$0,1,1'$}
													}
													edge from parent
													node[rec, above=0.2cm] {$0,d,1'$}
												}
												edge from parent
												node[rec, above=0.2cm] {$0,d,d'$}
											}
											edge from parent
											node[rec, above=0.2cm] {$c,d,d'$}
										}
										edge from parent
										node[rec, above=0.2cm] {$c,c',d$}
									}
									edge from parent
									node[rec, above=0.2cm] {$0,c,c'$}
								}
								edge from parent
								node[rec, above=0.2cm] {$0,b,c'$}
							}
							edge from parent
							node[rec, above=0.2cm] {$0,b,b'$}
						}
						edge from parent
						node[rec, above=0.2cm] {$a,b,b'$}
					}
					edge from parent
					node[rec, above=0.2cm] {$a,a',b$}
				}
				edge from parent
				node[rec, above=0.2cm] {$0,a,a'$}
			}
			edge from parent
			node[rec, above=0.2cm] {$0,0'$}
		};
		
	\end{tikzpicture}
	\caption{An \dtdSCE\ of $D_1$ of width $3$, implying $\dtwSCE{D_1} \leq 3$.}
	\label{fig:SCE_dtd_D1}
\end{sidewaysfigure}

\begin{proof}
	Here is a robber-monotone winning strategy for $4$ cops: first, the cops are placed on $\{0,0'\}$. Then, the robber is either in the positive part of the graph or in its negative counterpart.
    Without loss of generality, we can assume that the robber is in the positive part.
    Then, the cops occupy $\{0,0',a,a'\}$, $\{0,a,a',b\}$, $\{a,a',b,b'\}$, $\{a,b,b',0\}$, $\{b,b',0,c'\}$, $\{b,0,c',c\}$, $\{0,c',c,d\}$, $\{c',c,d,d'\}$, $\{c,d,d',0\}$, $\{d,d',0,1'\}$, $\{d,0,1',1\}$, $\{0,1',1,2\}$, $\{1',1,$ $2,2'\}$, $\{1,2,2',0\}$, $\{2,2',$ $0,3'\}$, $\{2,0,3',3\}$, $\{0,3',3,4'\}$ and $\{0,3,4',4\}$ consecutively.
\end{proof}

It is noteworthy that the above strategy is not a cop-monotone strategy since the cops return to $0$ repeatedly.

The next \namecref{lem:D_1_NCWgeq4} is the most involved statement to prove in this section.
Its proof shows the intrinsic necessity of empty bags in directed tree decompositions equivalent to the cops-and-robber game.
By the proof of \cref{thm:c&r_game}, the \dtdSCE\ in \cref{fig:SCE_dtd_D1} yields a winning strategy for $4$ cops on $D_1$.
In fact, the strategy corresponds to the one in the proof of \cref{lem:robber-mon_strategy_for_D_1}.
Whenever there is an empty bag in the decomposition, the cops reoccupy $0$ in the corresponding round.
In such rounds, they do not reduce the robber space but change the guards by reoccupying $0$. 

\begin{lemma}
    \label{lem:D_1_NCWgeq4}
    Let $D_1$ be the digraph depicted in~\cref{fig:D1}. Then, $\dtwNCW{D_1} \geq 4$.
\end{lemma}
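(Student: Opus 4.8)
The plan is a proof by contradiction. Assume $D_1$ admits an \dtdNCW $\mathcal{T}=(T,\beta,\gamma)$ of width at most $3$, so that $|\Gamma(t)|\le 4$ for every $t\in V(T)$, and derive that some node must in fact satisfy $|\Gamma(t)|\ge 5$, contradicting the assumed width and hence forcing $\dtwNCW{D_1}\ge 4$. The guiding intuition, made explicit in the paragraph preceding the statement, is that the width-$3$ \dtdSCE of \cref{fig:SCE_dtd_D1} succeeds only because of its three empty bags, at which the cops reoccupy $0$ without shrinking the robber space; an \dtdNCW forbids empty bags, and I want to show that this prohibition forces a blow-up to width $4$.

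First I would record the features of $D_1$ that drive the argument. By \cref{lem:lemma_strong_separator} every guard $\gamma(e)$ is a separator of $D_1$, and by \cref{observation_SC} each robber region $\beta(T_t)-\gamma(e)$ is a union of strong components of $D_1-\gamma(e)$. Because consecutive rungs $\{x,x'\}$ of the two ladders are joined by antiparallel edges, any separator cutting a ladder between two rungs must contain both vertices of a rung; and the long red chords together with the blue edges $0\to 4,\,0\to 4'$ and the back edges $b\to 0$, $c\to a$, $1\to c$, $2\to d$, $d\to b$ create directed cycles that wind through the anchor $\{0,0'\}$ and the extreme rung $\{4,4'\}$. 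I would isolate a short list of these wrap-around cycles and prove the separation lemma I need: \emph{if the robber region is an interior segment of the positive ladder, then $\gamma(e)$ must contain a vertex of $\{0,0'\}$ in addition to the ladder cut bounding the segment.} The symmetric statement holds for the negative ladder, and $\{0,0'\}$ is the unique separator between the two halves.

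Rooting $\mathcal{T}$ at a node whose bag meets $\{0,0'\}$ (legitimate for \dtdNCWs by the remark following \cref{lem:lemmaDifferentRoot}), I would follow a root-to-leaf path $P$ along which the robber region stays inside the positive part and shrinks down to a rung near $\{4,4'\}$, tracking at each edge whether $0$ lies in the current guard. The separation lemma forces $0\in\gamma(e)$ whenever the front is interior to the ladder; yet when the front sits at $\{a,a'\}$ (and again further along) the cops must release $0$ to free a cop for the ladder cut, exactly as the guard sequence $\{0,a,a'\},\{a,a',b\},\{a,b,b'\},\{0,b,b'\},\dots$ of \cref{fig:SCE_dtd_D1} does. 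Hence the edges of $P$ whose guard contains $0$ do not form an up-closed initial segment: $0$ must be dropped and then \emph{reintroduced}. I would then zoom in on a node $t\in P$ at which $0$ re-enters the guard. There the three constraints collide: the parent guard still holds the ladder-cut vertices bounding the region, the child guard must additionally hold $0$ to kill the wrap-around cycle, and, crucially, $\beta(t)\neq\emptyset$ since $\mathcal{T}$ is an \dtdNCW, so $\beta(t)$ contributes a genuinely new rung vertex. Assembling $\Gamma(t)=\beta(t)\cup\bigcup_{e\sim t}\gamma(e)$ then forces (as in the toy computation $\{a,b,b'\}\cup\{0,b,c'\}\cup\{b'\}=\{0,a,b,b',c'\}$) at least $|\Gamma(t)|\ge 5$.

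The main obstacle is exactly this last step: proving that the reintroduction of $0$ cannot be absorbed for free. In the \dtdSCE it is performed at an empty bag, spreading the guard change over an extra node so that no $\Gamma(t)$ exceeds $4$; I must show that without empty bags the anchor, the ladder cut, and the progress vertex are forced into a single $\Gamma(t)$. This requires verifying that $\beta(t)$ cannot be hidden inside an incident guard—using that the bags partition $V(D_1)$ and that the swept vertex is genuinely new to the region—together with a finite case analysis over the positions of the front, in each case certifying via the explicit wrap-around cycles that the confining guard needs both an anchor vertex and a full rung cut. The remaining bookkeeping, namely the symmetry between the positive and negative halves and the treatment of the branch node separating them, is routine once this critical-node estimate is established.
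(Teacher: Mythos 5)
Your intuition---that non-empty bags cannot pay for the cops' repeated reoccupation of $0$---is exactly the one the paper exploits, but the central step of your plan, namely that at a node $t$ where $0$ re-enters the guard the three ingredients must pile up into $|\Gamma(t)|\ge 5$, has a genuine gap, and in fact fails as a local statement. An \dtdNCW\ does not require $\beta(T_t)$ (nor $\beta(t)$) to be disjoint from the incident guards, so the ``progress vertex'' can be hidden inside them. Concretely, take bags $\{0,0'\},\{a,a'\},\{b\},\{b'\},\{c'\},\dots$ along the positive ladder with guards $\{0,0'\},\{0,a,a'\},\{a,b,b'\},\{0,b,b'\},\{0,b,c'\},\dots$: the node $t_3$ with $\beta(t_3)=\{b'\}$ is precisely the node whose outgoing guard reintroduces $0$, yet $\Gamma(t_3)=\{b'\}\cup\{a,b,b'\}\cup\{0,b,b'\}=\{0,a,b,b'\}$ has only four vertices, and both incident edges satisfy \ref{D2.2} (in $D_1-\{a,b,b'\}$ no edge leaves the segment $\{c,c',\ldots,4,4'\}$, and in $D_1-\{0,b,b'\}$ no edge enters it). In this configuration the width-$5$ node is the parent $t_2$, since $\Gamma(t_2)=\{b\}\cup\{0,a,a'\}\cup\{a,b,b'\}$ has five elements, whereas with the alternative guard $\{a,a',b\}$ on $(t_2,t_3)$ the violation sits at $t_3$ itself. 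So a violation exists but it migrates depending on how bags and guards are interleaved; proving that it must occur \emph{somewhere} requires controlling all admissible tree shapes, bag assignments and guards simultaneously---and that, together with your unproven assumption that every width-$3$ decomposition follows the front-sweeping pattern at all, is the actual content of the lemma rather than routine bookkeeping.

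The paper avoids this localisation problem entirely by going global: from a hypothetical width-$3$ \dtdNCW\ it builds, via \cref{lem:lemma_dtd_to_strategy_tree}, a finite strategy tree of width $4$ on the \emph{same} tree, so the fact that the non-empty bags partition $V(D_1)$ gives $|V(T_s)|=|V(T)|\le|V(D_1)|=34$; it then proves---using minimality of the strategy tree, rerooting via \cref{lemma:strategy_tree_different_root}, and the robber-monotone strategy of \cref{lem:robber-mon_strategy_for_D_1} to force the structure of a minimum strategy tree through a sequence of claims---that every finite width-$4$ strategy tree of $D_1$ has at least $36$ nodes, because the reoccupations of $0$ are moves that shrink nothing and hence cost nodes the bag partition cannot pay for. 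The counting contradiction $36>34$ is exactly the global accounting your local blow-up argument is missing; without either that counting device or some comparable normalisation that tames arbitrary decompositions, your proposal does not close.
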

\begin{proof}
    Towards a contradiction, suppose there is an \dtdNCW\ $\mathcal{T} \coloneqq (T, \beta, \gamma)$ of $D_1$ of width $3$. 
	Then there is a finite strategy tree $\mathcal{T}_s \coloneqq (T_s, \cops{}, \robber{})$ of $D_1$ of width $4$ satisfying the properties in \cref{lem:lemma_dtd_to_strategy_tree}. By definition, $\{\beta(t): t \in V(T)\}$ is a partition of $V(D_1)$ into non-empty sets, and every bag must contain at least one vertex. Then by \cref{lem:lemma_dtd_to_strategy_tree}, $|V(T_s)| = |V(T)| \leq |V(D_1)| = 34$.
	
	We show by a series of claims that $\mathcal{T}_s$ contains at least $36$ nodes, which leads to a contradiction. Assume that $\mathcal{T}_s$ contains the minimum number of nodes. Let $t_1 \in V(T_s)$ be the root of $T_s$. 
	Since there is a robber-monotone strategy for $4$ cops on $D_1$ (by \cref{lem:robber-mon_strategy_for_D_1}), $\mathcal{T}_s$ is also robber-monotone (as otherwise $\mathcal{T}_s$ does not have the minimum number of nodes).
	For every $s \in V(T_s)$, we have $|\cops{s}| \leq 4$ as the width of $\mathcal{T}_s$ is $4$. 
	Since every finite strategy tree corresponds to a winning strategy for cops, we consider the play consistent with the strategy given by $\mathcal{T}_s$ in the following claims.
	Let $\Delta \coloneqq \{b,b',c,c',d,d',1,1',2,2',3,3',4,4'\}$ and $-\Delta \coloneqq \{-b,-b',-c,-c',-d,-d',-1,-1',-2,-2',$ $-3,-3',-4,-4'\}$.
	
	\begin{claim} \label{claim:st1}
		$T_s$ contains a node $t \in V(T_s)$ such that $\cops{t} = \{0,0',a,a'\}$.
	\end{claim}
	
	\begin{proof*}
		We claim that every finite strategy must contain such a node by giving a winning strategy for the robber against $4$ cops who do not occupy $0,0',a$ and $a'$ simultaneously. The winning strategy is as follows: The robber starts the game by occupying one of $\{0,0',a,a'\}$ and stays there until the robber's current position is included in the cops' next move. As the cops do not occupy $0,0',a$ and $a'$ simultaneously, and $D_1[\{0,0',a,a'\}]$ is a clique of size $4$, there is always at least one vertex $v \in \{0,0',a,a'\}$ to which the robber can escape. In this way, the robber can elude cops continuously.
	\end{proof*}
	
	By \cref{lemma:strategy_tree_different_root,claim:st1}, we may assume that the root $t_1$ has $\cops{t_1} = \{0,0',a,a'\}$ without contradicting the minimality and the monotonicity of $\mathcal{T}_s$.

	\begin{claim} \label{claim:st2}
		$T_s$ has at most two leaves $l,l' \in V(T_s)$. Furthermore, $\cops{l} = \{0,3,4',4\}$ and $\cops{l'} = \{0,-3,-4',-4\}$.
	\end{claim}
	
	\begin{proof*}
		Let $l \in V(T_s)$ be a leaf. We have $\robber{(\pred{l},l)} \neq \emptyset$, as otherwise we can delete $l$, and $T_s$ does not contain the minimum number of nodes. Since $T_s$ is finite, and $l$ is a leaf of $T_s$, the cops can catch the robber by moving from $\cops{\pred{l}}$ to $\cops{l}$, and the robber has nowhere to escape while the cops are moving.
		In terms of the strategy tree, if $C$ is a strong component of $D_1 - \cops{\pred{l}}$ with $V(C) \subseteq \robber{(\pred{l},l)}$, there is no strong component $C'$ of $D_1 - \cops{l}$ contained in the same strong component of $D_1 - (\cops{\pred{l}} \cap \cops{l})$ as $C$.
		Let $R$ be the strong component of $D_1 - (\cops{\pred{l}} \cap \cops{l})$ that contains $C$. Then we have $R \subseteq \cops{l} - (\cops{\pred{l}} \cap \cops{l})$. As $C \neq \emptyset$, $R$ contains at least one vertex.
		Note that $|\cops{\pred{l}} \cap \cops{l}| \leq |\cops{l}| - |R|$ and $|\cops{l}| \leq 4$.
		
		If $|R| = 1$, then $|\cops{\pred{l}} \cap \cops{l}| \leq 3$, i.e.~at most $3$ cops can remain in $D_1$ to guard $R$ of size $1$. Every $v \in V(D_1) - \{4,4',-4',-4'\}$ has at least four neighbours connected by an undirected edge. Therefore, no such $v$ can be contained in $R$. Furthermore, if $V(R) = \{4'\}$, then $\cops{\pred{l}} \cap \cops{l}$ must contain more than $3$ vertices since there are closed walks starting from $4'$ that do not contain the three neighbours $\{3,3',4\}$ of $4'$ connected by an undirected edge. Therefore, neither $4'$ nor $-4'$ can be contained in $R$ (due to symmetry). Then the possible choices for $R$ are $V(R) = \{4\}$ with $\cops{\pred{l}} \cap \cops{l} = \{0,3,4'\}$ and the negative counterpart (due to symmetry). Hence, the cop position for a leaf can be $\{0,3,4',4\}$ or $\{0,-3,-4',-4\}$.
		Otherwise, we have $2 \leq |R| \leq 4$. Then $0 \leq |\cops{\pred{l}} \cap \cops{l}| \leq 2$; i.e.~at most $2$ cops can remain in $D_1$ to guard $R$ of size $2$, $1$ cop for $R$ of size $3$ and $0$ cops for $R$ of size $4$. It is straightforward to verify that this is not possible.
		Hence, $T_s$ has at most two leaves $l,l' \in V(T_s)$. Furthermore, $\cops{l} = \{0,3,4',4\}$ and $\cops{l'} = \{0,-3,-4',-4\}$.
	\end{proof*}

    As every outgoing edge has at least one leaf, \cref{claim:st2} implies the following claim.
	
	\begin{claim} \label{claim:st3}
		$T_s$ has at most one node of out-degree $2$, and every other node has out-degree at most $1$.
	\end{claim}
	
	\begin{claim} \label{claim:st4}
		The root $t_1$ has out-degree $2$ in $T_s$.
	\end{claim}	
	
	\begin{proof*}
		Due to \cref{claim:st3}, $t_1$ cannot have out-degree greater than $2$.
		Suppose $t_1$ has out-degree $1$ in $T_s$. Let $s \in V(T_s)$ be the child of $t_1$. Then $\robber{(t_1,s)}$ is the union of the vertex sets of the strong components $C$ and $C'$ of $D_1 - \cops{t_1}$ with $V(C) = \Delta$ and $V(C') = -\Delta \cup \{-a,-a'\}$, i.e.~$\robber{(t_1,s)} = V(C) \cup V(C')$. Assume the cops are placed on $\cops{t_1} = \{0,0',a,a'\}$. Since each vertex of $\{0,0',a,a'\}$ has two neighbours in either $C$ or $C'$ connected by an undirected edge, once one of the cops leaves his position, there is at least one strong component from which the robber can move to the released position. Suppose the cops never occupy $\{0,0',a,a'\}$ in the later rounds after they leave this position. Then the robber wins against $4$ cops just by staying in $D_1[\{0,0',a,a'\}]$ (with the same strategy presented in \cref{claim:st1}). Therefore, the cops have to occupy $\{0,0',a,a'\}$ again after leaving $\cops{t_1}$, which means there are at least two nodes in $\mathcal{T}_s$ with the same cop position $\{0,0',a,a'\}$. This contradicts the minimality of $\mathcal{T}_s$ since there is a robber-monotone winning strategy for $4$ cops, where the cops occupy $\{0,0',a,a'\}$ only once, given by the proof of \cref{lem:robber-mon_strategy_for_D_1}.
	\end{proof*}

    From \cref{claim:st2,claim:st3,claim:st4} we obtain the following claim.

	\begin{claim} \label{claim:st5}
		$T_s$ has exactly two leaves $l,l' \in V(T_s)$. Furthermore, $\cops{l} = \{0,3,4',4\}$ and $\cops{l'} = \{0,-3,-4',-4\}$.
	\end{claim}
	
	By \cref{claim:st3,claim:st4}, each node in $T_s$ except for the root and the two leaves has precisely one child. Since the root $t_1$ has two outgoing edges, and there are two strong components $C$ and $C'$ of $D_1 - \cops{t_1}$ such that $V(C) = \Delta$ and $V(C') = -\Delta \cup \{-a,-a'\}$, one of the outgoing edges has the robber space $V(C)$ and the other has $V(C')$.

	\begin{claim} \label{claim:st6}
		Let $-t_1,t_2 \in V(T_s)$ be the children of $t_1$ such that $\robber{(t_1,-t_1)} = -\Delta \cup \{-a,-a'\}$ and $\robber{(t_1,t_2)} = \Delta$. Then $\cops{-t_1} = \{0,0',-a,-a'\}$ and $\cops{t_2} = \{0,a,a',b\}$.
	\end{claim}
	
	\begin{proof*}
		Assume the cops are placed on $\cops{t_1} = \{0,0',a,a'\}$, and the robber is in the space $\robber{(t_1,-t_1)}$ $= -\Delta \cup \{-a,-a'\}$. If one of the cops in $\{0,0'\}$ moves, then the robber space increases (since $-a$ and $-a'$ are the neighbours of $0$ and $0'$ connected by an undirected edge). However, the cops in $\{a,a'\}$ can move without increasing the robber space. Since $D_1[\{0,0',-a,-a'\}]$ is a clique of size $4$, the cops have to occupy $\{0,0',-a,-a'\}$ in their next move to obtain a robber-monotone strategy tree with the minimum number of nodes, i.e.~$\cops{-t_1} = \{0,0',-a,-a'\}$.
		
		Now, assume that the robber is in $\robber{(t_1,t_2)} = \Delta$. Then, no cop in $\{0,a,a'\}$ can move without increasing the robber space, but the cop in $0'$ can. Suppose the cop moves to a vertex $v \in V(D_1) - \{b\}$. Then, in the next round, no cop in $\{0,a,a'\}$ can move without increasing the robber space. Furthermore, immediately leaving $v$ right after occupying it contradicts the minimality of $\mathcal{T}_s$. If the released cop moves to $b$, then the cop in $0$ can move without increasing the robber space, and we have the remaining robber space $\Delta - \{b\}$. Therefore, we have $\cops{t_2} = \{0,a,a',b\}$.
	\end{proof*}

	\begin{claim} \label{claim:st7}
		Let $t_3 \in V(T_s)$ be the child of $t_2$ with $\robber{(t_2,t_3)} = \Delta - \{b\}$. Then $\cops{t_3} = \{a,a',b,b'\}$.
	\end{claim}
	
	\begin{proof*}
		By a similar argument as in \cref{claim:st6}.
	\end{proof*}

	\begin{claim} \label{claim:st8}
		Let $t_4 \in V(T_s)$ be the child of $t_3$ with $\robber{(t_3,t_4)} = \Delta - \{b,b'\}$. Then $\cops{t_4} = \{a,b,b',0\}$.
	\end{claim}
	
	\begin{proof*}
		Assume the cops are placed on $\cops{t_3} = \{a,a',b,b'\}$, and the robber is in $\Delta - \{b,b'\}$. Then, no cop in $\{a,b,b'\}$ can move without increasing the robber space, but the cop in $a'$ can. 
		By a similar argument as in \cref{claim:st6}, the cop cannot occupy a vertex $v \in V(D_1) - \{c,0\}$.
		Therefore, we have two choices for the next move, namely $\{a,b,b',c\}$ and $\{a,b,b',0\}$.
		If they move to $\{a,b,b',c\}$, the cop in $a$ can move without increasing the robber space, while the others cannot. Then, whichever the cop chooses to occupy, no cop in $\{b,b',c\}$ can move without increasing the robber space.
		However, if the cops move to $\{a,b,b',0\}$, then the cop in $a$ can move, and they can occupy $\{b,b',0,c'\}$, where the cop in $b'$ can move without increasing the robber space. 
		Due to the minimality of $\mathcal{T}_s$, we have $\cops{t_4} = \{a,b,b',0\}$.
	\end{proof*}

	As the remaining robber space $\Delta - \{b,b'\}$ has a similar pattern, the proof for the following claim resembles the proofs of the claims above.
    The robber space for each edge is omitted since it is clear from the context.
	
	\begin{claim} \label{claim:st9}
		Let $t_{i+1} \in V(T_s)$ be the child of $t_i$ for $i \in \{4, ..., 17\}$. Then we have $\cops{t_5} = \{b,b',0,c'\}$, $\cops{t_6} = \{b,0,c',c\}$, $\cops{t_7} = \{0,c',c,d\}$, $\cops{t_8} = \{c',c,d,d'\}$, $\cops{t_9} = \{c,d,d',0\}$, $\cops{t_{10}} = \{d,d',0,1'\}$, $\cops{t_{11}}$ $= \{d,0,1',1\}$, $\cops{t_{12}} = \{0,1',1,2\}$, $\cops{t_{13}}$ $= \{1',1,2,2'\}$, $\cops{t_{14}} = \{1,2,2',0\}$, $\cops{t_{15}} = \{2,2',0,3'\}$, $\cops{t_{16}} = \{2,0,3',3\}$, $\cops{t_{17}} = \{0,3',3,4'\}$ and $\cops{t_{18}} = \{0,3,4',4\}$.
	\end{claim}
	
	By \cref{claim:st1,claim:st6,claim:st7,claim:st8,claim:st9} and due to symmetry, $\mathcal{T}_s$ contains $36$ nodes. As $\mathcal{T}_s$ contains the minimum number of nodes, every finite strategy tree $\mathcal{T}_s$ must contain at least $36$ nodes.
\end{proof}

This establishes that $D_1$ witnesses \tdtwSCE not being an upper bound on \tdtwNCW.

\CompSCENCW*

\begin{proof}
    Let $D_1$ be the digraph depicted in \cref{fig:D1}.
    We prove that indeed $\dtwSCE{D_1} < \dtwNCW{D_1}$.
	The \dtdSCE\ in \cref{fig:SCE_dtd_D1} shows that $\dtwSCE{D_1} \leq 3$ and, by \cref{lem:D_1_NCWgeq4}, we have $\dtwNCW{D_1} \geq 4$.
\end{proof}

We obtain a few more insights on $D_1$ from our observations.

\begin{corollary} \label{cor:D1_c&r_game}
	Let $D_1$ be the digraph depicted in \cref{fig:D1}. Then $4$ cops have a winning strategy in the cops-and-robber game on $D_1$. Furthermore, it holds that $\dtwNW{D_1} \geq 4$, $\dtwNCW{D_1} \geq 4$ and $\dtwSCD{D_1} \geq 4$.
\end{corollary}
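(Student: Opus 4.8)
The plan is to treat this corollary as a bookkeeping consequence of the two lemmas just established together with the definitional ordering of the width notions collected in \cref{fig:figure_dtw_compare}; no fresh combinatorial analysis of $D_1$ should be required. First I would dispose of the statement about the game: \cref{lem:robber-mon_strategy_for_D_1} already exhibits a \emph{robber-monotone} winning strategy for $4$ cops on $D_1$, and a robber-monotone winning strategy is in particular a winning strategy. Hence $4$ cops have a winning strategy in the cops-and-robber game on $D_1$, which is the first assertion.

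For the three lower bounds I would start from the single nontrivial input, namely $\dtwNCW{D_1} \geq 4$, which is exactly \cref{lem:D_1_NCWgeq4}. To propagate this bound to the other two parameters I would use the inequalities $\dtwNCW{D} \leq \dtwNW{D} \leq \dtwSCD{D}$, valid for every digraph $D$ and recorded in \cref{fig:figure_dtw_compare}. These hold purely by the definitions: every \dtdNW\ is in particular an \dtdNCW\ (the no-walk condition, applied to a closed walk read as a walk from a vertex of $\beta(T_t)$ back to itself through an outside vertex, forbids exactly the closed walks ruled out by \ref{D2.2}), and every \dtdSCD\ is in particular an \dtdNW\ (since a strong component of $D-\gamma(e)$ is disjoint from $\gamma(e)$ and admits no walk leaving and re-entering it through an outside vertex). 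Specialising to $D = D_1$ yields $4 \leq \dtwNCW{D_1} \leq \dtwNW{D_1} \leq \dtwSCD{D_1}$, so $\dtwNW{D_1} \geq 4$ and $\dtwSCD{D_1} \geq 4$, as claimed.

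Since every step merely invokes an already-proved lemma or a definitional comparison, there is no genuine obstacle here — the real work lives in \cref{lem:D_1_NCWgeq4}. The only point demanding a moment of care is the direction of the comparisons: one must be sure that the lower bound is established for the \emph{smallest} of the three parameters, $\dtwNCW{}$, so that it correctly transfers \emph{upward} to the larger parameters $\dtwNW{}$ and $\dtwSCD{}$ rather than in the wrong direction.
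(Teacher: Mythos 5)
Your proposal is correct and follows essentially the same route as the paper: the three lower bounds are obtained exactly as in the paper's proof, namely from \cref{lem:D_1_NCWgeq4} propagated upward through the definitional inequalities $\dtwNCW{D_1} \leq \dtwNW{D_1} \leq \dtwSCD{D_1}$ recorded in \cref{fig:figure_dtw_compare}. The only cosmetic difference is the game part, where you cite the explicit strategy of \cref{lem:robber-mon_strategy_for_D_1} while the paper derives a strategy from the width-$3$ \dtdSCE\ of \cref{fig:SCE_dtd_D1} via the first part of \cref{thm:c&r_game} --- but, as the paper itself remarks, these amount to the same strategy.
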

\begin{proof}
	Due to the first part of \cref{thm:c&r_game} and the \dtdSCE\ in \cref{fig:SCE_dtd_D1}, $4$ cops have a winning strategy on $D_1$. By the proof of \cref{thm:SCE<NCW} and due to the relation shown in \cref{fig:figure_dtw_compare}, we have $\dtwNW{D_1} \geq 4$, $\dtwNCW{D_1} \geq 4$ and $\dtwSCD{D_1} \geq 4$.
\end{proof}
The above corollary shows that the converse of the first part of \cref{thm:c&r_game} does not hold for \dtwNW{}-, \dtwNCW{}- and \tdtwSCD.
\begin{corollary} \label{cor:D1_haven}
	Let $D_1$ be the digraph depicted in \cref{fig:D1}. Then $D_1$ has no haven of order $5$. Furthermore, it holds that $\dtwNW{D_1} \geq 4$, $\dtwNCW{D_1} \geq 4$ and $\dtwSCD{D_1} \geq 4$.
\end{corollary}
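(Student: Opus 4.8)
The plan is to observe that this corollary is essentially a packaging of results already in hand, so the work reduces to a single application of the haven theorem. Concretely, \cref{cor:D1_c&r_game} has just told us that $4$ cops have a winning strategy in the cops-and-robber game on $D_1$, and it also records the lower bounds $\dtwNW{D_1} \geq 4$, $\dtwNCW{D_1} \geq 4$, and $\dtwSCD{D_1} \geq 4$. The statement to be proved repeats exactly those three inequalities in its ``furthermore'' clause, so that part requires nothing beyond citing \cref{cor:D1_c&r_game}; all the real effort for those bounds already happened in \cref{lem:D_1_NCWgeq4} and the comparison diagram of \cref{fig:figure_dtw_compare}.

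The only genuinely new assertion is that $D_1$ has no haven of order $5$. First I would recall from \cref{cor:D1_c&r_game} that $4$ cops have a winning strategy on $D_1$. Then I would apply the first part of \cref{thm:theorem_haven} with $k = 4$: since $k$ cops win the game, the digraph admits no haven of order $k+1 = 5$. This is an immediate instantiation, with no case analysis or auxiliary construction needed.

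I do not expect any real obstacle here, precisely because the substantive combinatorial analysis — constructing the robber-monotone strategy for $4$ cops (\cref{lem:robber-mon_strategy_for_D_1}) and proving the matching lower bound via the strategy-tree counting argument (\cref{lem:D_1_NCWgeq4}) — was completed before reaching this corollary. The sole point to be careful about is to cite the \emph{first} part of \cref{thm:theorem_haven} (the implication from a winning cop strategy to the absence of a haven of one higher order), rather than its converse parts, and to keep the indexing consistent so that ``$4$ cops win'' yields precisely ``no haven of order $5$''. Thus the proof is just: by \cref{cor:D1_c&r_game}, $4$ cops have a winning strategy on $D_1$; by the first part of \cref{thm:theorem_haven}, $D_1$ has no haven of order $5$; and the treewidth lower bounds are restated directly from \cref{cor:D1_c&r_game}.
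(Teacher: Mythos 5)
Your proposal matches the paper's proof exactly: the paper also derives the absence of a haven of order $5$ by combining \cref{cor:D1_c&r_game} (four cops win on $D_1$) with the first part of \cref{thm:theorem_haven}, and inherits the three treewidth lower bounds directly from \cref{cor:D1_c&r_game}. Your care in citing the first (cops-to-no-haven) direction of \cref{thm:theorem_haven} with the correct index shift from $k=4$ to order $k+1=5$ is precisely what the paper's one-line proof relies on.
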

\begin{proof}
	By~\cref{cor:D1_c&r_game} and the first part of \cref{thm:theorem_haven}.
\end{proof}

The above corollary shows that the converse of the second part of \cref{thm:theorem_haven} does not hold for \dtwNW{}-, \dtwNCW{}- and \tdtwSCD.

\begin{corollary} \label{cor:D1_bramble}
	Let $D_1$ be the digraph depicted in \cref{fig:D1}. Then, $D_1$ has no bramble of order $5$.
\end{corollary}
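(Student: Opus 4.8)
The plan is to derive this directly from the absence of a haven of order $5$ in $D_1$ that has already been established, combined with the general relationship between brambles and havens. The argument is a short contrapositive: I would assume for contradiction that $D_1$ contains a bramble of order $5$ and then manufacture a haven of order $5$, contradicting \cref{cor:D1_haven}.

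Concretely, the first step is to invoke \cref{lem:lemma_bramble_haven}, which guarantees that whenever a digraph contains a bramble of order $k$, it also contains a haven of order $k$. Applying this with $k = 5$ shows that a bramble of order $5$ in $D_1$ would force the existence of a haven of order $5$ in $D_1$. The second step is to recall \cref{cor:D1_haven}, which asserts that $D_1$ has no haven of order $5$. Putting these together, any bramble of order $5$ in $D_1$ would contradict the absence of a haven of order $5$, so $D_1$ cannot contain a bramble of order $5$.

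I do not anticipate any substantive obstacle, since the statement is an immediate consequence of the implication ``bramble $\Rightarrow$ haven'' together with the haven bound already proved for $D_1$. The only point that requires slight care is matching the order parameters correctly: \cref{lem:lemma_bramble_haven} preserves the order exactly, so a bramble of order $5$ yields a haven of order precisely $5$, and this lines up directly with the order appearing in \cref{cor:D1_haven}, making the contradiction clean.
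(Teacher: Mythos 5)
Your proof is correct and matches the paper's own argument exactly: the paper also derives this corollary by combining \cref{lem:lemma_bramble_haven} (a bramble of order $k$ yields a haven of order $k$) with \cref{cor:D1_haven} (no haven of order $5$ in $D_1$). Your careful note that the lemma preserves the order parameter exactly is the only detail one needs to check, and it holds.
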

\begin{proof}
	By \cref{cor:D1_haven} and \cref{lem:lemma_bramble_haven}.
\end{proof}

The above corollary shows that the converse of the first part of \cref{cor:bramble_dtw} does not hold for \dtwNW{}-, \dtwNCW{}- and \tdtwSCD.

\begin{corollary} \label{cor:D1_k_linked_set}
	Let $D_1$ be the digraph depicted in~\cref{fig:D1}. Then, $D_1$ does not contain a $4$-linked set. Furthermore, it holds that $\dtwNW{D_1} \geq 4$, $\dtwNCW{D_1} \geq 4$ and $\dtwSCD{D_1} \geq 4$.
\end{corollary}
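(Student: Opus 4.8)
The plan is to deduce the absence of a $4$-linked set in $D_1$ directly from the upper bound $\dtwSCE{D_1} \leq 3$ that we have already exhibited, and to obtain the stated lower bounds by invoking the earlier corollaries rather than reproving them. In other words, I would separate the statement into its genuinely new part (no $4$-linked set) and its bookkeeping part (the three lower bounds, which are a verbatim repetition of facts proved before).

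Concretely, I would first recall that the \dtdSCE\ displayed in \cref{fig:SCE_dtd_D1} witnesses $\dtwSCE{D_1} \leq 3$. Setting $k = 4$, this bound reads $\dtwSCE{D_1} \leq k-1$. Since \cref{lem:lemma_balanced_w_separator} is stated for the generic directed \treewidth $\dtw{\cdot}$ and therefore applies to the \dtwSCE{}-flavour, its hypothesis is met, and it yields that every vertex set $W \subseteq V(D_1)$ admits a balanced $W$-separator of order at most $4$. By the definition of a $k$-linked set, this is precisely the assertion that $D_1$ contains no $4$-linked set.

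For the ``furthermore'' clause I would simply cite \cref{cor:D1_c&r_game}, which already establishes $\dtwNW{D_1} \geq 4$, $\dtwNCW{D_1} \geq 4$ and $\dtwSCD{D_1} \geq 4$; no additional argument is needed, as the claim is a restatement packaged for later reference.

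I do not anticipate a genuine obstacle here: the new content follows in a single step from the already-constructed decomposition together with \cref{lem:lemma_balanced_w_separator}. The only point that deserves a moment's care is confirming that the separator lemma, phrased with the notation $\dtw{\cdot}$, is licensed to be read with the \dtwSCE{} notion specifically; but this is exactly the reading sanctioned by the convention fixed after \cref{def:4dtd} (that $\dtw{\cdot}$ denotes any of the five notions in statements that hold for all of them), so applying it to $\dtwSCE{D_1}$ is legitimate.
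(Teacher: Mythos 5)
Your proof is correct, but it takes a genuinely different route from the paper's. The paper deduces the absence of a $4$-linked set from the obstruction chain it has been assembling: by \cref{cor:D1_bramble}, $D_1$ has no bramble of order $5$, and by the contrapositive of \cref{lem:lemma_bramble_k_linked_set}, a $4$-linked set would yield precisely such a bramble; the ``furthermore'' clause is then carried over from the earlier corollaries, just as you do by citing \cref{cor:D1_c&r_game}. You instead return to the width-$3$ \dtdSCE\ of \cref{fig:SCE_dtd_D1} and apply \cref{lem:lemma_balanced_w_separator} with $k=4$ in a single step. Your reading of that lemma for the \dtwSCE{}-flavour is legitimate: it is exactly the reading the paper itself uses in the proof of \cref{lem:comparing_dtwSCD_with_others}, and the equivalence between ``every $W$ has a balanced $W$-separator of order at most $k$'' and ``no $k$-linked set'' is built into the lemma's statement, so no extra care is needed there. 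Logically, the two arguments rest on the same foundation, since \cref{cor:D1_bramble} itself descends from the decomposition in \cref{fig:SCE_dtd_D1} via \cref{cor:D1_c&r_game} and \cref{cor:D1_haven}; what differs is the bookkeeping. The paper's choice keeps the chain game $\rightarrow$ haven $\rightarrow$ bramble $\rightarrow$ linked set intact, so that each corollary in the sequence documents the failure of the converse of one duality statement at the matching level of the hierarchy, whereas your shortcut is more economical but bypasses the haven and bramble corollaries entirely. Both are valid proofs of the stated corollary.
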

\begin{proof}
	By \cref{cor:D1_bramble} and \cref{lem:lemma_bramble_k_linked_set}.
\end{proof}

The above corollary shows that the converse of \cref{lem:lemma_balanced_w_separator} does not hold for \dtwNW{}-, \dtwNCW{}- and \tdtwSCD.
In the case of \tdtwNW, the above results are shown in \cite[Theorem 10.]{adler_directed_2007}.
Note that due to the relation shown in \cref{fig:figure_dtw_compare} and the \dtdSCE in~\cref{fig:SCE_dtd_D1}, we have $\dtwNCWE{D_1} \leq 3$.

\subsection{A Counterexample to the closure of \texorpdfstring{\dtdNCWs}{NCW-directed tree decompositions}}
\label{sec:NCW_not_closed}

The following \namecref{thm:NCW_not_closed} states that the \tdtwNCW can be larger for a butterfly minor of a digraph than for the digraph itself. 

\begin{sidewaysfigure}
	\centering		
	\input{fig4}
\end{sidewaysfigure}

\NOTClosedNCW*
\begin{proof}
    Let $D_1$, $D_1'$ be digraphs depicted in \cref{fig:D1,fig:D1'}.
    Indeed we have that $D_1 \preccurlyeq_b D_1'$, but $\dtwNCW{D_1} \nleq \dtwNCW{D_1'}$.
	The \dtdNCW\ in \cref{fig:dtd2_D1'} shows that $\dtwNCW{D_1'}$ $\leq 3$.
    However, due to~\cref{lem:D_1_NCWgeq4}, we have $\dtwNCW{D_1} \geq 4$.
\end{proof}
\section{\texorpdfstring{\tdtwNCW}{NCW-directed \treewidth} is not an upper bound on \texorpdfstring{\tdtwSCE}{SC\_0-directed \treewidth}}
\label{sec:counterexample_NCW_being_upper_bound_of_SCE}

We essentially follow the proof of \cite[Theorem 10]{adler_directed_2007} to prove \cref{thm:NCW<SCE}, which states that there is a digraph $D$ satisfying $\dtwNCW{D} < \dtwSCE{D}$, i.e.~\tdtwNCW cannot be an upper bound of \tdtwSCE. We then present \cref{cor:D2_c&r_game,cor:D2_obstructions}, which show that the exact min-max theorem between \tdtwSCE and the cops-and-robber game does not hold; moreover, the exact duality with the obstructions is not possible.

We first define two variants of an \dtdSCE. Just by ignoring \ref{D3.3}, we obtain the first one, called an \emph{\dtdSCEV}, i.e.~it is an abstract directed decomposition satisfying \ref{D3.1} and \ref{D3.2}. The corresponding directed tree-width is denoted by \dtwSCEV{D}. Then the following lemma immediately follows from the definition.

\begin{lemma} \label{lem:SCEV_leq_SCE}
	Let $D$ be a digraph. Then $\dtwSCEV{D} \leq \dtwSCE{D}$.
\end{lemma}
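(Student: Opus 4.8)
The plan is to observe that the class of \dtdSCEVs of $D$ is obtained from the class of \dtdSCEs of $D$ simply by dropping the cardinality constraint \ref{D3.3}, and then to compare the two width measures as minima over these nested classes. First I would recall that an \dtdSCEV is an abstract digraph decomposition satisfying \ref{D3.1} and \ref{D3.2}, whereas an \dtdSCE must additionally satisfy \ref{D3.3}, namely $|V(T)| \leq |V(D)|^2$. Consequently, every \dtdSCE of $D$ is in particular an \dtdSCEV of $D$: the very same triple $(T,\beta,\gamma)$ witnesses membership in both classes, and its width $w(\mathcal{T})$ is unaffected, since $w$ depends only on the sets $\Gamma(t)$ and not on $|V(T)|$.

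Having established this containment, the inequality is immediate from the definition of the two width measures as minima. Since
\[
\{\mathcal{T} : \mathcal{T} \text{ is an \dtdSCE of } D\} \subseteq \{\mathcal{T} : \mathcal{T} \text{ is an \dtdSCEV of } D\},
\]
minimising the fixed width function $w$ over the larger right-hand set yields a value no larger than minimising the same function over the smaller left-hand set, which is precisely the assertion $\dtwSCEV{D} \leq \dtwSCE{D}$.

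I expect no genuine obstacle here, as this is purely a constraint-relaxation argument. The only routine points worth confirming are that $w(\mathcal{T})$ is truly insensitive to $|V(T)|$, so that the two classes assign identical widths to a shared decomposition, and that both minima range over non-empty sets. The latter holds because the single-node tree with $\beta(r) = V(D)$ is a valid \dtdSCE for every non-empty digraph $D$: it satisfies \ref{D3.1}, satisfies \ref{D3.2} vacuously as there are no tree edges, and satisfies \ref{D3.3} since $1 \leq |V(D)|^2$.
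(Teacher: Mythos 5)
Your proposal is correct and matches the paper's reasoning exactly: the paper states this lemma follows immediately from the definition, since every \dtdSCE\ is an \dtdSCEV\ (the latter simply drops condition \ref{D3.3}), so the minimum of the same width function over the larger class can only be smaller or equal. Your additional checks (width independence from $|V(T)|$, non-emptiness via the single-node decomposition) are sound but routine.
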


The second variant, called a \emph{\dtdUSCE}, is obtained by ignoring \ref{D3.3} and replacing \ref{D3.2} by 
\begin{enumerate}[leftmargin=1.5cm,labelindent=16pt,label= ($\mathsf{USC_\emptyset}$2)]
	\item \label{usc.2} for all $e = (s,t) \in E(T)$, $\beta(T_t)$ is the union of vertex sets of some strong components of $D-\gamma(e)$.
\end{enumerate}
The corresponding directed tree-width is denoted by \dtwUSCE{D}.
Since \ref{D3.2} implies \ref{usc.2}, any \dtdSCEV is a \dtdUSCE of the same width, i.e.~$\dtwUSCE{D} \leq \dtwSCEV{D}$ for any digraph $D$. The following lemma states that a stronger version of the converse is true, which speaks not only of the inequality $\dtwSCEV{D} \leq \dtwUSCE{D}$ but also of the size of bags.

\begin{lemma} \label{lem:SCEV_leq_USCE}
	Let $D$ be a digraph and $\mathcal{T} \coloneqq (T, \beta, \gamma)$ be a \dtdUSCE\ of $D$ of width $k$. Then there exits an \dtdSCEV\ $\mathcal{T'} \coloneqq $ $(T', \beta', \gamma')$ of $D$ of width at most $k$. Furthermore, there is a mapping $p: V(T') \rightarrow V(T)$ such that $|\beta'(t)| \leq |\beta(p(t))|$ for all $t \in V(T')$.
\end{lemma}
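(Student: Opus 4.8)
The plan is to build $\mathcal{T}'$ by a recursive \emph{splitting} procedure that traverses $T$ from the root downwards and, at each node, restricts attention to a single strong component. Concretely, I would define a routine $\textsc{Build}(u,C)$ taking a node $u \in V(T)$ together with a set $C \subseteq \beta(T_u)$ that is the vertex set of a single strong component of $D - \gamma(\bar e)$, where $\bar e$ denotes the edge of $T$ entering $u$. It returns a rooted tree whose root $\rho$ receives bag $\beta'(\rho) \coloneqq \beta(u) \cap C$ and label $p(\rho) \coloneqq u$. For every child $w_l$ of $u$ in $T$, with edge $g_l = (u,w_l)$, condition~\ref{usc.2} guarantees that $\beta(T_{w_l})$ is a union of strong components of $D - \gamma(g_l)$; for each such component $S$ that additionally satisfies $S \subseteq C$, I attach $\textsc{Build}(w_l,S)$ as a child of $\rho$ via a new edge carrying the \emph{unchanged} guard $\gamma(g_l)$. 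The decomposition $\mathcal{T}'$ is then the output of $\textsc{Build}(r, V(D))$ at the root $r$, where $\bar e$ and its guard are taken empty and the requirement that $C$ be a single strong component is waived only at the root, since the root carries no \ref{D3.2}-constraint.

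The crux, and the step I expect to be the main obstacle, is to show that the splitting is consistent: every strong component $S$ of $D-\gamma(g_l)$ with $S \subseteq \beta(T_{w_l})$ lies \emph{entirely} inside one of the strong components of $D - \gamma(\bar e)$ that make up $\beta(T_u)$, so that $S$ is either wholly contained in $C$ or wholly disjoint from it. I would prove this dichotomy as follows. Since $T_{w_l}$ is a subtree of $T_u$ we have $S \subseteq \beta(T_{w_l}) \subseteq \beta(T_u)$, and because $\beta(T_u)$ is by~\ref{usc.2} a union of strong components of $D-\gamma(\bar e)$ it is disjoint from $\gamma(\bar e)$; hence $S \cap \gamma(\bar e) = \emptyset$. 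As $S$ is strongly connected in $D - \gamma(g_l) \subseteq D$, it is also strongly connected in $D - \gamma(\bar e)$, and is therefore contained in a single strong component of $D - \gamma(\bar e)$. This is exactly what makes the recursion well defined, and it shows that the pieces attached below $\rho$ are precisely the strong components of $D-\gamma(g_l)$ (over all children $w_l$) contained in $C$, whose union is $\bigcup_l(\beta(T_{w_l}) \cap C) = C \setminus \beta'(\rho)$. By induction $\textsc{Build}(u,C)$ then partitions $C$, and at the root this yields the partition~\ref{D3.1} of $V(D)$ with $\beta(r) \neq \emptyset$.

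It remains to verify \ref{D3.2}, the width bound, and the bag domination, all of which I expect to be routine given the construction. Every edge of $T'$ joins the root of some $\textsc{Build}(u,C)$ to the root $t'$ of an attached $\textsc{Build}(w_l,S)$, carries guard $\gamma(g_l)$, and has lower bag $\beta'(T'_{t'}) = S$ by the inductive covering claim; since $S$ is a strong component of $D - \gamma(g_l)$, this is precisely~\ref{D3.2}. The decisive point for the width is that guards are copied \emph{without accumulation}: at $\rho$ the incident guards are $\gamma(\bar e)$ together with the $\gamma(g_l)$ of the attached children, so
$\Gamma'(\rho) \subseteq (\beta(u) \cap C) \cup \gamma(\bar e) \cup \bigcup_{g \sim u}\gamma(g) \subseteq \Gamma(u)$,
whence $|\Gamma'(\rho)| \leq |\Gamma(u)| \leq k+1$ and therefore $w(\mathcal{T}') \leq k$. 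Finally $\beta'(\rho) = \beta(u) \cap C \subseteq \beta(p(\rho))$ gives $|\beta'(t)| \leq |\beta(p(t))|$ for every $t \in V(T')$, and $T'$ is finite because each recursive call descends to a child in the finite tree $T$.
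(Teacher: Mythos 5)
Your proposal is correct and takes essentially the same route as the paper: the paper's proof simply defers to the splitting construction of the addendum \cite{johnson_addendum_2001} (refine each node of $T$ into one node per strong component, with the ``natural projection'' serving as $p$), which is precisely your $\textsc{Build}(u,C)$ recursion. The two points you highlight --- the dichotomy that each strong component of $D-\gamma(g_l)$ inside $\beta(T_{w_l})$ lies wholly inside or wholly outside $C$, and the fact that guards are copied without accumulation so $\Gamma'(\rho)\subseteq\Gamma(u)$ --- are exactly what makes that cited construction work, so this matches the paper's argument in explicit, self-contained form.
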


\begin{proof}
	This proof is analogous to the proof in \cite{johnson_addendum_2001}, which shows that $\dtwSCE{D} \leq \dtwNW{D}$. The same construction can be used due to \ref{usc.2} and the fact that \ref{usc.2} implies $\beta(T_t) \subseteq V(D) - \gamma(e)$ for all $e = (s,t) \in V(T)$. By construction, we have $|\beta'(t)| \leq |\beta(p(t))|$ for all $t \in V(T')$, where the mapping $p$ is the natural projection mentioned in the proof.
\end{proof}

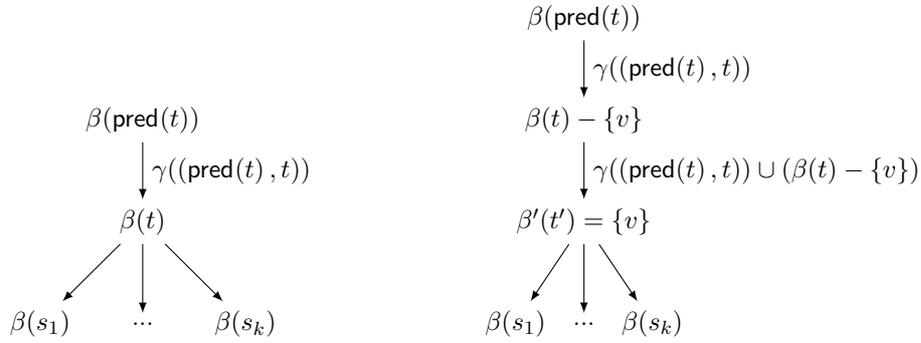
\begin{figure}[!ht]
	\centering
	\begin{subfigure}[t]{.45\linewidth}
	\centering
	\begin{tikzpicture}[level distance=1.5cm,
		level 1/.style={sibling distance=3cm},
		level 2/.style={sibling distance=1.5cm},scale=.9,transform shape]
		\tikzstyle{edge from parent}=[> = latex, ->, draw]
		\node {$\beta(\pred{t})$}
		child {node {$\beta(t)$}
				child {node {$\beta(s_1)$}}
				child {node {...}}
				child {node {$\beta(s_k)$}}
			edge from parent
			node [right] {$\gamma((\pred{t},t))$}
		};
	\end{tikzpicture}
\label{adler_lemma_a}
\end{subfigure}%
\quad
\begin{subfigure}[t]{.45\linewidth}
	\centering
	\begin{tikzpicture}[level distance=1.5cm,
		level 1/.style={sibling distance=3cm},
		level 2/.style={sibling distance=2cm},
		level 3/.style={sibling distance=1cm},scale=.9,transform shape]
		\tikzstyle{edge from parent}=[> = latex, ->, draw]
		\tikzset{label/.style={fill=white!20}}
		\node {$\beta(\pred{t})$}
		child {node {$\beta(t) - \{v\}$}
			child {node {$\beta'(t') = \{v\}$}
				child {node {$\beta(s_1)$}}
				child {node {...}}
				child {node {$\beta(s_k)$}}
				edge from parent
				node [right] {$\gamma((\pred{t},t)) \cup (\beta(t) - \{v\})$}
			}
			edge from parent
			node [right] {$\gamma((\pred{t},t))$}
		};
	\end{tikzpicture}
\label{adler_lemma_b}
\end{subfigure}%
	\caption [An auxiliary figure used in the proof of \cref{lem:Adler_lemma1}]{In the proof of \cref{lem:Adler_lemma1}, the left figure in $\mathcal{T}$ is replaced by the right one in $\mathcal{T'}$, where a new node $t'$ is added after $t$. Additionally, one of the vertices in the bag of $t$ is split off to form the new bag of $t'$.}
	\label{adler_lemma}
\end{figure}

\begin{lemma} \label{lem:Adler_lemma1}
	Let $\mathcal{T} \coloneqq $ $(T, \beta, \gamma)$ be an \dtdSCEV\ of a digraph $D$ of width $k$ with $|\beta(t)| \geq 2$ for some $t \in V(T)$. Then there exists a \dtdUSCE\ $\mathcal{T'} \coloneqq $ $(T', \beta', \gamma')$ of $D$ of width at most $k$ satisfying 
	\begin{itemize}
		\item $V(T') = V(T) \cup \{t'\}$ for a node $t' \notin V(T)$,
		\item $|\beta'(r)| = |\beta(r)|$ for all $r \in V(T) - \{t\}$,
		\item $|\beta'(t)| = |\beta(t)| - 1$, and
		\item $|\beta'(t')| = 1$.
	\end{itemize}
\end{lemma}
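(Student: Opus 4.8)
The plan is to split the vertex $v$ off the bag of $t$ by inserting exactly one new node $t'$ directly below $t$ and re-hanging all of $t$'s former children beneath $t'$. Concretely, since $|\beta(t)| \geq 2$ I would fix an arbitrary $v \in \beta(t)$ and let $s_1,\dots,s_k$ be the children of $t$ in $T$. I set $V(T') := V(T) \cup \{t'\}$, make $t'$ the unique child of $t$ and make $s_1,\dots,s_k$ the children of $t'$ (so $(t,t') \in E(T')$ and each former edge $(t,s_i)$ becomes $(t',s_i)$), leaving the rest of $T$ untouched. For the bags I put $\beta'(t) := \beta(t)-\{v\}$, $\beta'(t') := \{v\}$, and $\beta'(r) := \beta(r)$ for every other $r$. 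For the guards I keep $\gamma'(e) := \gamma(e)$ for every edge of $T$ (with $(t,s_i)$ relabelled as $(t',s_i)$), and for the single new edge I set $\gamma'((t,t')) := \gamma((\pred{t},t)) \cup (\beta(t)-\{v\})$, reading the first term as $\emptyset$ in the case where $t$ is the root of $T$.

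The four size statements and the partition property \ref{D3.1} are immediate: the collection of bags arises from that of $\mathcal{T}$ by replacing $\beta(t)$ with the disjoint pair $\beta(t)-\{v\}$ and $\{v\}$, so $\{\beta'(r) : r \in V(T')\}$ is again a partition of $V(D)$ into possibly empty sets, and if $t$ happens to be the root then $\beta'(t)=\beta(t)-\{v\}\neq\emptyset$ precisely because $|\beta(t)|\geq 2$. For the width I would compute $\Gamma'(t)$ and $\Gamma'(t')$ directly: the edges at $t$ in $T'$ are $(\pred{t},t)$ and $(t,t')$, giving $\Gamma'(t)=(\beta(t)-\{v\})\cup\gamma((\pred{t},t))\subseteq\Gamma(t)$, whereas the edges at $t'$ are $(t,t')$ together with all $(t',s_i)$, giving $\Gamma'(t')=\{v\}\cup(\beta(t)-\{v\})\cup\gamma((\pred{t},t))\cup\bigcup_i\gamma((t,s_i))=\Gamma(t)$; every other $\Gamma'(r)$ equals $\Gamma(r)$. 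Hence $w(\mathcal{T}')\leq w(\mathcal{T})=k$.

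The real work is verifying \ref{usc.2} for the inserted edge $(t,t')$; for every old edge it is inherited verbatim from \ref{D3.2} of $\mathcal{T}$, since both endpoints' subtree bags and guards are unchanged ($\beta'(T'_t)=\beta(T_t)$ and $\beta'(T'_{s_i})=\beta(T_{s_i})$ are single strong components, hence unions of strong components). For the new edge, observe that $\beta'(T'_{t'})=\beta(T_t)-(\beta(t)-\{v\})$ and $D-\gamma'((t,t'))=\bigl(D-\gamma((\pred{t},t))\bigr)-(\beta(t)-\{v\})$. Writing $C:=\beta(T_t)$, which is a strong component of $D_1:=D-\gamma((\pred{t},t))$ by \ref{D3.2} (or $C=V(D)$, $D_1=D$ when $t$ is the root), the removed set $\beta(t)-\{v\}$ lies entirely inside $C$. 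The key observation — which is exactly why the output is a \dtdUSCE{} and not an \dtdSCEV{} — is the elementary fact that deleting vertices from within a single strong component cannot merge it with any other: any strong component of $D_1-(\beta(t)-\{v\})$ lies entirely inside $V(C)-(\beta(t)-\{v\})$ or entirely outside $V(C)$, because a closed walk meeting both would already witness strong connectivity in $D_1$ between $C$ and another component. Consequently $V(C)-(\beta(t)-\{v\})=\beta'(T'_{t'})$ is a union of strong components of $D-\gamma'((t,t'))$, establishing \ref{usc.2}. I expect this strong-component-remnant argument — together with silently folding the root case into the same computation via the convention $\gamma((\pred{t},t))=\emptyset$ — to be the only genuinely non-routine point.
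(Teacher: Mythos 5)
Your proposal is correct and follows essentially the same approach as the paper's proof: the identical construction (inserting $t'$ between $t$ and its children, with $\beta'(t') = \{v\}$ and $\gamma'((t,t')) = \gamma((\pred{t},t)) \cup (\beta(t)-\{v\})$) and the same verification that only the new edge needs checking for \ref{usc.2}. The only difference is that you spell out the width computation and the strong-component-remnant argument (that deleting vertices inside a strong component leaves a union of strong components), steps the paper treats as immediate.
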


\begin{proof}
	We construct $\mathcal{T'} \coloneqq $ $(T', \beta', \gamma')$ as follows (see \cref{adler_lemma}):
	Let $t \in V(T)$ such that $|\beta(t)| \geq 2$, and $s_1,...,s_k \in V(T)$ be the successors of $t$ in $T$.
	\begin{itemize}
		\item $V(T') \coloneqq V(T) \cup \{t'\}$ for some $t' \notin V(T)$,
		\item $E(T') \coloneqq (E(T) - \{(t,s_i) : 1 \leq i \leq k\}) \cup \{(t',s_i) : 1 \leq i \leq k\} \cup \{(t,t')\}$,
		\item $\beta'(r) \coloneqq \beta(r)$ for all $r \in V(T) - \{t\}$,
		\item $\beta'(t) \coloneqq \beta(t) - \{v\}$ for some $v \in \beta(t)$,
		\item $\beta'(t') \coloneqq \{v\}$,
		\item $\gamma'(e) \coloneqq \gamma(e)$ for all $e \in E(T) - \{(t,s_i) : 1 \leq i \leq k\}$,
		\item $\gamma'((t',s_i)) \coloneqq \gamma((t,s_i))$ for all $1 \leq i \leq k$, and
		\item $\gamma'((t,t')) \coloneqq \gamma((\pred{t},t)) \cup (\beta(t) - \{v\})$.
	\end{itemize}
	If $t \in V(T)$ does not have a predecessor, then $\gamma((\pred{t},t))$ and $\gamma'((\pred{t},t))$ are regarded as empty sets.
	
	It directly follows from the construction that $\mathcal{T'}$ satisfies \ref{D3.1} and every requirement of the lemma. Hence, it remains to show that $\mathcal{T'}$ also satisfies \ref{usc.2}. For every $e = (s,t) \in V(T') - \{(t,t')\}$, $\beta'(T_t')$ remains the same as in $\beta(T_t)$. Since \ref{D3.2} implies \ref{usc.2}, every such edge satisfies the desired condition.
	By construction, we know $\beta'(T_t')$ is the vertex set of a strong component of $D - \gamma'((\pred{t},t))$. Therefore, $\beta'(T_{t'}') = \beta'(T_t') - (\beta(t) - \{v\})$ is the union of vertex sets of strong components of
	\begin{equation*}
		D - (\gamma'((\pred{t},t)) \cup (\beta(t) - \{v\})) = D - \gamma'((t,t')).\tag*{\qedhere}
	\end{equation*}
\end{proof}

\begin{corollary}\label{lem:Adler_lemma2}
	Let $\mathcal{T} \coloneqq $ $(T, \beta, \gamma)$ be an \dtdSCEV\ of a digraph $D$ of width $k$. Then there exists an \dtdSCEV\ $\mathcal{T'} \coloneqq (T', \beta', \gamma')$ of $D$ of width at most $k$ satisfying $|\beta'(r)| \leq 1$ for all $r \in V(T')$.
\end{corollary}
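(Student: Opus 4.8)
The plan is to reduce every bag to size at most one by repeatedly peeling a single vertex off an oversized bag, alternating between the two lemmas just proved. I would measure the ``bag excess'' of a decomposition by
\[
	\Phi(\mathcal{T}) \coloneqq \sum_{t \in V(T)} \binom{|\beta(t)|}{2},
\]
which is a non-negative integer satisfying $\Phi(\mathcal{T}) = 0$ if and only if every bag of $\mathcal{T}$ has size at most $1$. The whole argument is then a strong induction on $\Phi$, where the base case $\Phi(\mathcal{T}) = 0$ already gives a decomposition with all bags of size at most $1$.

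For the inductive step, suppose $\Phi(\mathcal{T}) > 0$, so some node $t$ has $s \coloneqq |\beta(t)| \geq 2$. First I would apply \cref{lem:Adler_lemma1} to $t$, obtaining a \dtdUSCE\ $\mathcal{T}_1 \coloneqq (T_1,\beta_1,\gamma_1)$ of width at most $k$ in which the bag at $t$ has shrunk to size $s-1$, one fresh singleton bag has appeared, and all other bags are unchanged. Since $\binom{s}{2} - \binom{s-1}{2} = s-1 \geq 1$ and $\binom{1}{2} = 0$, this step strictly decreases the excess: $\Phi(\mathcal{T}_1) = \Phi(\mathcal{T}) - (s-1) < \Phi(\mathcal{T})$. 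Next I would apply \cref{lem:SCEV_leq_USCE} to $\mathcal{T}_1$ to return to an \dtdSCEV\ $\mathcal{T}_2 \coloneqq (T_2,\beta_2,\gamma_2)$ of width at most $k$, together with the projection $p \colon V(T_2) \to V(T_1)$. The aim is to conclude $\Phi(\mathcal{T}_2) \leq \Phi(\mathcal{T}_1)$, so that $\mathcal{T}_2$ is an \dtdSCEV\ of width at most $k$ with strictly smaller excess, whence the induction hypothesis applies and finishes the proof.

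The main obstacle is precisely this last inequality, because the guarantee $|\beta_2(t')| \leq |\beta_1(p(t'))|$ handed to us by \cref{lem:SCEV_leq_USCE} is only a \emph{per-node} bound and does not on its own control the sum $\Phi$: a single bag could a priori have many preimages under $p$, each nearly as large, so a naive term-by-term comparison would over-count. What saves the argument is the structure of the natural projection used in that proof, which only \emph{refines} existing bags and never reassigns a vertex to the block of a different original node; that is, one checks from the explicit construction that $\beta_2(t') \subseteq \beta_1(p(t'))$ for every $t' \in V(T_2)$. Since the bags of $\mathcal{T}_2$ form a partition of $V(D)$, the sets $\{\beta_2(t') : p(t') = t\}$ are then pairwise disjoint subsets of $\beta_1(t)$. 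Using the superadditivity $\binom{a}{2} + \binom{b}{2} \leq \binom{a+b}{2}$ (the difference being $ab \geq 0$) together with monotonicity of $\binom{\cdot}{2}$, this yields $\sum_{p(t') = t} \binom{|\beta_2(t')|}{2} \leq \binom{|\beta_1(t)|}{2}$ for each $t$, and summing over all $t \in V(T_1)$ gives $\Phi(\mathcal{T}_2) \leq \Phi(\mathcal{T}_1)$.

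Putting the pieces together, each combined step (\cref{lem:Adler_lemma1} followed by \cref{lem:SCEV_leq_USCE}) produces an \dtdSCEV\ of width at most $k$ whose excess is strictly smaller, so since $\Phi$ is a non-negative integer the process terminates, and the final decomposition $\mathcal{T}'$ has $|\beta'(r)| \leq 1$ for all $r \in V(T')$. Beyond the projection-based estimate flagged above, the only things I would track carefully are that the width bound $\leq k$ is preserved by both lemmas (it is) and that a node with a bag of size $\geq 2$ always exists to feed into \cref{lem:Adler_lemma1} whenever $\Phi > 0$ (it does); everything else is routine bookkeeping.
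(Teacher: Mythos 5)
Your proposal is correct and follows essentially the same route as the paper, whose entire proof consists of the single sentence that $\mathcal{T'}$ is obtained by repeated application of \cref{lem:Adler_lemma1,lem:SCEV_leq_USCE}. The only difference is that you make the termination of this alternation explicit via the potential $\Phi$, correctly observing that the per-node bound $|\beta'(t)| \leq |\beta(p(t))|$ stated in \cref{lem:SCEV_leq_USCE} is by itself too weak (it allows $\Phi$ to grow when several nodes project to the same node), so that one must extract the refinement property $\beta'(t') \subseteq \beta(p(t'))$ from the underlying construction --- a subtlety the paper's one-line proof glosses over.
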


\begin{proof}
	By repeated application of \cref{lem:Adler_lemma1,lem:SCEV_leq_USCE}, we can obtain $\mathcal{T'}$ from $\mathcal{T}$.
\end{proof}

Since every \dtdSCEV\ $\mathcal{T'} \coloneqq (T', \beta', \gamma')$ provided by the above lemma has at most one vertex in each bag, if there are $s,t \in V(T')$ and $e = (s,t) \in E(T')$ such that $\beta'(t) = \{v\}$ and $\beta'(s) = \{w\}$, then $w$ is denoted by $\pred{v}$. Furthermore, if the bag of a node in $V(T')$ contains $v \in V(D)$, we simply name the node as $v$.

\begin{lemma}\label{lem:delete_emptybag_lemma_sub}
	Let $\mathcal{T} \coloneqq $ $(T, \beta, \gamma)$ be an \dtdSCEV\ of a digraph $D$ of width $k$. If $\mathcal{T}$ contains $e_1 \coloneqq (t_1,t_2) \in E(T)$ and $e_2 \coloneqq (t_2,t_3) \in E(T)$ such that
	\begin{enumerate}
		\item $\beta(t_2) = \emptyset$, and
		\item $\gamma(e_1) \subseteq \gamma(e_2)$, or $\gamma(e_2) \subseteq \gamma(e_1)$, or $\beta(T_{t_3})$ is the vertex set of a strong component of $D - (\gamma(e_1) \cap \gamma(e_2))$, 
	\end{enumerate}
	 then there is an \dtdSCEV\ $\mathcal{T'} \coloneqq (T', \beta', \gamma')$ of $D$ of width at most $k$ constructed as follows:
	\begin{itemize}
		\item $V(T') = V(T) - \{t_2\}$,
		\item $E(T') = (E(T) - \{e_1, e_2\}) \cup \{e_3 \coloneqq (t_1,t_3)\}$,
		\item $\beta'(t) = \beta(t)$ for all $t \in V(T')$,
		\item $\gamma'(e) = \gamma(e)$ for all $e \in E(T') - \{e_3\}$, and
		\item $\gamma'(e_3) = \gamma(e_1) \cap \gamma(e_2)$.
	\end{itemize}
	We call $\beta(t_2)$ a \emph{deletable empty bag}.
\end{lemma}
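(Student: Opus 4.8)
The plan is to verify directly that the triple $\mathcal{T}' \coloneqq (T', \beta', \gamma')$ produced by the stated construction is again an \dtdSCEV\ of $D$ and that its width does not exceed $k$; concretely, I would check \ref{D3.1}, the width bound, and \ref{D3.2} in turn. First I would record two structural facts. Since $\beta(t_2) = \emptyset$, condition \ref{D3.1} forces $t_2$ to be a non-root node, so its predecessor $t_1$ indeed exists; and since the construction excises $t_2$ together with precisely its two incident edges $e_1, e_2$, it is implicit that $t_3$ is the only child of $t_2$, so that $T'$ is again a rooted directed tree with the same root as $T$.

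The key observation I would establish next is that $\beta'(T'_t) = \beta(T_t)$ for every $t \in V(T')$. The only node deleted from any rooted subtree when passing from $T$ to $T'$ is $t_2$, and every other node---in particular $t_3$ and all of its descendants, which now hang directly off $t_1$ via $e_3$---remains reachable from the same ancestors. As $\beta(t_2) = \emptyset$, deleting it changes no bag-union. This fact yields \ref{D3.1} at once: $\{\beta'(t) : t \in V(T')\}$ is still a partition of $V(D)$ (we only dropped an empty part), and the root, being distinct from $t_2$, is unchanged with a nonempty bag.

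For the width I would compare the sets $\Gamma(t)$. For every node other than $t_1$ and $t_3$ the incident edges and their guards are untouched, so $\Gamma$ is unchanged; at $t_1$ and at $t_3$ the only modified incident edge is $e_3$, whose guard $\gamma(e_1) \cap \gamma(e_2)$ is contained in both $\gamma(e_1)$ and $\gamma(e_2)$, so the corresponding $\Gamma$-sets can only shrink. Hence $w(\mathcal{T}') \leq w(\mathcal{T}) = k$. The same reasoning dispatches most of \ref{D3.2}: for every edge $e = (s,t) \neq e_3$ we have $\gamma'(e) = \gamma(e)$ and $\beta'(T'_t) = \beta(T_t)$, so the strong-component property is inherited verbatim from $\mathcal{T}$.

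The crux---and the only place where hypothesis~2 is used---is the new edge $e_3 = (t_1, t_3)$. Writing $S \coloneqq \beta'(T'_{t_3}) = \beta(T_{t_3})$ and using that $t_3$ is the unique child of $t_2$ with $\beta(t_2) = \emptyset$, I get $\beta(T_{t_2}) = S$; applying \ref{D3.2} of $\mathcal{T}$ to $e_1$ and to $e_2$ then shows that $S$ is simultaneously the vertex set of a strong component of $D - \gamma(e_1)$ and of $D - \gamma(e_2)$. What remains is to see that $S$ is a strong component of $D - (\gamma(e_1) \cap \gamma(e_2)) = D - \gamma'(e_3)$, and this is exactly what condition~2 buys: if $\gamma(e_1) \subseteq \gamma(e_2)$ or $\gamma(e_2) \subseteq \gamma(e_1)$ then the intersection equals $\gamma(e_1)$ or $\gamma(e_2)$ and the claim already holds, while the third disjunct asserts it outright. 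I expect this last step to be the main (if brief) obstacle, since without condition~2 keeping fewer guard vertices could merge $S$ with neighbouring strong components and so violate \ref{D3.2}.
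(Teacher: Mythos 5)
Your proof is correct and takes essentially the same route as the paper's: both arguments reduce to showing that $\beta(T_{t_3}) = \beta'(T'_{t_3})$ is the vertex set of a strong component of $D - \gamma(e_1)$ and of $D - \gamma(e_2)$ (via $\beta(T_{t_2}) = \beta(T_{t_3})$, as $\beta(t_2)=\emptyset$), and then use hypothesis~2 to conclude the same for $D - (\gamma(e_1) \cap \gamma(e_2)) = D - \gamma'(e_3)$. The only difference is one of thoroughness: you spell out the verification of \ref{D3.1}, the width bound via $\gamma'(e_3) \subseteq \gamma(e_1)$ and $\gamma'(e_3) \subseteq \gamma(e_2)$, and the implicit fact that $t_3$ must be the unique child of $t_2$, all of which the paper compresses into ``it is straightforward to check.''
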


\begin{proof}
	Assume that $\mathcal{T}$ contains $e_1, e_2 \in E(T)$ as described above.
	As $\beta(t_2) = \emptyset$, we have $\beta(T_{t_2}) = \beta(T_{t_3})$. By construction, we know $\beta'(T_{t_3}') = \beta(T_{t_3})$. Furthermore, $\beta(T_{t_3})$ is the vertex set of a strong component of $D - \gamma(e_2)$ and also of $D - \gamma(e_1)$.
	Due to the second condition of $\mathcal{T}$, $\beta(T_{t_3}) = \beta'(T_{t_3}')$ is the vertex set of a strong component of $D - (\gamma(e_1) \cap \gamma(e_2)) = D - \gamma'(e_3)$. Then it is straightforward to check that we can obtain an \dtdSCEV\ $\mathcal{T'}$ of width at most $k$ by following the above construction.
\end{proof}

\begin{corollary}\label{lem:delete_emptybag_lemma}
	Let $\mathcal{T} \coloneqq $ $(T, \beta, \gamma)$ be an \dtdSCEV\ of a digraph $D$ of width $k$. Then there exists an \dtdSCEV\ $\mathcal{T'} \coloneqq (T', \beta', \gamma')$ of $D$ of width at most $k$, which does not contain a deletable empty bag.
\end{corollary}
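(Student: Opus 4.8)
The plan is to iterate \cref{lem:delete_emptybag_lemma_sub} until no deletable empty bag remains. Concretely, starting from $\mathcal{T}$, I maintain a current \dtdSCEV\ of $D$ of width at most $k$. As long as the current decomposition contains a deletable empty bag, witnessed by edges $e_1 = (t_1,t_2)$ and $e_2 = (t_2,t_3)$ satisfying conditions (1) and (2) of \cref{lem:delete_emptybag_lemma_sub}, I replace it by the decomposition $\mathcal{T'}$ produced by that lemma, which is again an \dtdSCEV\ of $D$ of width at most $k$. This produces a sequence of decompositions, each an \dtdSCEV\ of $D$ of width at most $k$.

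The key point is that this process is well-founded. Each application of \cref{lem:delete_emptybag_lemma_sub} sets $V(T') = V(T) - \{t_2\}$, so it removes exactly one node and strictly decreases $|V(T)|$. Since the number of nodes is a finite non-negative integer, the process must terminate after at most $|V(T)|$ steps, regardless of whether removing one deletable empty bag happens to create or destroy others; tracking the node count as a monovariant avoids any such bookkeeping. Note also that condition (1) forces the removed node $t_2$ to have an empty bag, whereas the root bag is nonempty by \ref{D3.1}; hence $t_2$ is never the root, so \ref{D3.1} (and thus the \dtdSCEV\ property) is preserved at each step.

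Finally, the terminal decomposition $\mathcal{T'}$ has no deletable empty bag: if it did, the witnessing edges would satisfy the hypotheses of \cref{lem:delete_emptybag_lemma_sub}, so the lemma would apply and the process would not have terminated. Thus $\mathcal{T'}$ is an \dtdSCEV\ of $D$ of width at most $k$ containing no deletable empty bag, as required. The only conceivable obstacle is confirming termination, which is immediate from the strictly decreasing node count, so there is no genuine difficulty here.
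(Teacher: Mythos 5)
Your proof is correct and follows the paper's approach exactly: the paper also proves this corollary by repeated application of \cref{lem:delete_emptybag_lemma_sub}, and your explicit termination argument via the strictly decreasing node count (plus the observation that the deleted node is never the root) merely fills in details the paper leaves implicit.
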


\begin{proof}
	By repeated application of \cref{lem:delete_emptybag_lemma_sub}.
\end{proof}

\begin{figure}[!ht]
	\centering
\begin{tikzpicture}
	[
	state/.style={circle, draw=black, thick, minimum size=5mm, font=\small,inner sep=0,outer sep=0, minimum size=5.5mm},
	blank/.style={circle, ultra thin, minimum size=10mm},
	> = latex, 
	shorten > = 1pt, 
	auto,
	node distance = 1.5cm, 
	line width= 0.3mm
	,scale=.8,transform shape
	]
	
	\node[state] (0) {$0$};
	\node[state] (0') [below of=0] {$0'$};
	\node[state] (1) [right of=0] {$1$};
	\node[state] (1') [below of=1] {$1'$};
	\node[state] (2) [right of=1] {$2$};
	\node[state] (2') [below of=2] {$2'$};
	\node[state] (3) [right of=2] {$3$};
	\node[state] (3') [below of=3] {$3'$};
	\node[state] (4) [below right=0.35cm and 1cm of 3] {$4$};
	
	\node[state] (-1) [left of=0] {$-1$};
	\node[state] (-1') [below of=-1] {$-1'$};
	\node[state] (-2) [left of=-1] {$-2$};
	\node[state] (-2') [below of=-2] {$-2'$};
	\node[state] (-3) [left of=-2] {$-3$};
	\node[state] (-3') [below of=-3] {$-3'$};
	\node[state] (-4) [below left=0.35cm and 1cm of -3] {$-4$};

	\path[-] (0) edge node {} (0');
	\path[-] (0) edge node {} (1);
	\path[-] (0') edge node {} (1');
	\path[-] (0') edge node {} (1);
	\path[-] (1) edge node {} (1');
	\path[-] (1) edge node {} (2);
	\path[-] (1) edge node {} (1');
	\path[-] (1') edge node {} (2');
	\path[-] (1') edge node {} (2);
	\path[-] (2) edge node {} (2');
	\path[-] (2) edge node {} (3);
	\path[-] (2) edge node {} (2');
	\path[-] (2') edge node {} (3');
	\path[-] (2') edge node {} (3);
	\path[-] (3) edge node {} (3');
	\path[-] (2) edge node {} (3');
	\path[-] (3) edge node {} (4);
	\path[-] (3') edge node {} (4);
	
	\path[-] (0) edge node {} (-1);
	\path[-] (0') edge node {} (-1');
	\path[-] (0') edge node {} (-1);
	\path[-] (-1) edge node {} (-1');
	\path[-] (-1) edge node {} (-2);
	\path[-] (-1) edge node {} (-1');
	\path[-] (-1') edge node {} (-2');
	\path[-] (-1') edge node {} (-2);
	\path[-] (-2) edge node {} (-2');
	\path[-] (-2) edge node {} (-3);
	\path[-] (-2) edge node {} (-2');
	\path[-] (-2') edge node {} (-3');
	\path[-] (-2') edge node {} (-3);
	\path[-] (-3) edge node {} (-3');
	\path[-] (-2) edge node {} (-3');
	\path[-] (-3) edge node {} (-4);
	\path[-] (-3') edge node {} (-4);
	
	\path[->, red] (0) edge node {} (2');
	\path[->, red] (0') edge node {} (2);
	\path[->, red] (0) edge[bend left=30] node[above] {} (2);
	\path[->, red] (0') edge[bend right=30] node[below] {} (2');
	\path[->, red] (0) edge node {} (-2');
	\path[->, red] (0') edge node {} (-2);
	\path[->, red] (0) edge[bend right=30] node[above] {} (-2);
	\path[->, red] (0') edge[bend left=30] node[below] {} (-2');
	
	\path[->, blue] (4) edge[bend right=55] node[above] {} (0);
	\path[->, blue] (4) edge[bend left=55] node[below] {} (0');
	\path[->, blue] (-4) edge[bend left=55] node[above] {} (0);
	\path[->, blue] (-4) edge[bend right=55] node[below] {} (0');
	
\end{tikzpicture}
\captionof{figure}[The digraph $D_2$ from the proof of~\cref{thm:NCW<SCE}]{The digraph $D_2$ from the proof of~\cref{thm:NCW<SCE} with $\dtwNCW{D_2} < \dtwSCE{D_2}$.
The digraph is originally given by Adler~\cite[Fig. 2]{adler_directed_2007}.}
\label{fig:D2}
\end{figure}
\begin{figure} [!ht]
	\centering
	\begin{tikzpicture}[grow=right ,sibling distance=35pt, level distance=73pt, scale=.8,transform shape,level 1/.style={sibling distance=3cm},level 2/.style={sibling distance=1.5cm}, level 3/.style={sibling distance=1cm}]
\tikzstyle{edge from parent}=[> = latex, ->, thick, draw]
\tikzset{tree/.style={draw,rounded corners,thick, fill=blue!20, minimum size=0.6cm, level distance=1.25cm,sibling distance=2cm,inner sep=2pt}}
\tikzset{rec/.append style={rectangle,draw=black, thin, fill=white, inner sep=1pt}}

\node [tree] {$0,0'$}
child {node [tree] {$-1$} 
	child {node [tree] {$-1'$}
		child {node [tree] {$-2$}
			child {node [tree] {$-2'$}
				child {node [tree] {$-3$}
					child {node [tree] {$-3'$}
						child {node [tree] {$-4$}
							edge from parent
							node[rec, below=0.2cm] {$-3,-3'$}
						}
						edge from parent
						node[rec, below=0.2cm] {$-2,-2',-3$}
					}
					edge from parent
					node[rec, below=0.2cm] {$-2,-2'$}
				}
				edge from parent
				node[rec, below=0.2cm] {$-1',-2,-4$}
			}
			edge from parent
			node[rec, below=0.2cm] {$-1,-1',-4$}
		}
		edge from parent
		node[rec, below=0.2cm] {$0',-1,-4$}
	}
	edge from parent
	node[rec, below=0.2cm] {$0,0'$}
}
child {node [tree] {$1$} 
	child {node [tree] {$1'$}
		child {node [tree] {$2$}
			child {node [tree] {$2'$}
				child {node [tree] {$3$}
					child {node [tree] {$3'$}
						child {node [tree] {$4$}
							edge from parent
							node[rec, above=0.2cm] {$3,3'$}
						}
						edge from parent
						node[rec, above=0.2cm] {$2,2',3$}
					}
					edge from parent
					node[rec, above=0.2cm] {$2,2'$}
				}
				edge from parent
				node[rec, above=0.2cm] {$1',2,4$}
			}
			edge from parent
			node[rec, above=0.2cm] {$1,1',4$}
		}
		edge from parent
		node[rec, above=0.2cm] {$0',1,4$}
	}
	edge from parent
	node[rec, above=0.2cm] {$0,0'$}
};
\end{tikzpicture}
\captionof{figure}[An \dtdNCW of $D_2$]{An \dtdNCW of $D_2$ in \cref{fig:D2} of width $3$, implying $\dtwNCW{D_2} \leq 3$.}
\label{fig:dtdNCW_D2}
\end{figure}

\CompNCWsmallerSCE*
\begin{proof}
    Let $D_2$ be the digraph depicted in \cref{fig:D2}.
    We show that $\dtwNCW{D_2} < \dtwSCE{D_2}$.
	The \dtdNCW\ in \cref{fig:dtdNCW_D2} shows that $\dtwNCW{D_2} \leq 3$.
	We want to show that $\dtwSCE{D_2} \geq 4$. By \cref{lem:SCEV_leq_SCE,lem:Adler_lemma2,lem:delete_emptybag_lemma}, it suffices to show that $D_2$ has no \dtdSCEV\ $\mathcal{T} \coloneqq $ $(T, \beta, \gamma)$ of width $3$ such that it holds $|\beta(t)| \leq 1$ for every $t \in V(T)$, and $\mathcal{T}$ does not contain a deletable empty bag. Towards a contradiction, suppose there is such an \dtdSCEV\ $\mathcal{T} \coloneqq $ $(T, \beta, \gamma)$ of $D_2$.
	
	\begin{claim}\label{claim:c5.4.1}
		$\mathcal{T}$ has at most two leaves, namely $4$ and $-4$.
	\end{claim}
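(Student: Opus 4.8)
The plan is to show that every leaf of $T$ is the node whose bag equals $\{4\}$ or the one whose bag equals $\{-4\}$. Since we are assuming $|\beta(t)| \le 1$ for every $t$, I would first observe that a leaf $l$ cannot carry an empty bag: as $l$ has no children, $\beta(T_l) = \beta(l)$, and condition \ref{D3.2} (that $\beta(T_t)$ be the vertex set of a strong component of $D_2 - \gamma(e)$) applied to $e \coloneqq (\pred{l},l)$ forces $\beta(T_l)$ to be nonempty. Hence $\beta(l) = \{v\}$ for a single vertex $v$, and $\{v\}$ is a strong component of $D_2 - \gamma(e)$. Because $l$ is a leaf, its only incident edge is $e$, so $\Gamma(l) = \{v\} \cup \gamma(e)$ with $v \notin \gamma(e)$; the width bound $|\Gamma(l)| \le 4$ then yields $|\gamma(e)| \le 3$.

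The key mechanism is a digon-counting argument. For $\{v\}$ to be a strong component of $D_2 - \gamma(e)$, every vertex $w$ joined to $v$ by edges in both directions (i.e.\ every undirected neighbour of $v$ in \cref{fig:D2}) must lie in $\gamma(e)$; otherwise $v$ and $w$ would both sit on the $2$-cycle $v \to w \to v$ surviving in $D_2 - \gamma(e)$, contradicting that $\{v\}$ is a strong component. Consequently $|\gamma(e)|$ is at least the number of undirected neighbours of $v$, so a leaf named $v$ can exist only if $v$ has at most three undirected neighbours.

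Next I would simply read the undirected degrees off \cref{fig:D2}. A short inspection shows that every vertex other than $0$, $4$, and $-4$ has at least four undirected neighbours (for instance $2$ is adjacent to $1,1',2',3,3'$ and $3$ is adjacent to $2,2',3',4$), whereas $4$ is adjacent only to $3,3'$, symmetrically $-4$ only to $-3,-3'$, and $0$ only to $0',1,-1$. This already excludes every vertex except $0,4,-4$ as a leaf, with $4$ and $-4$ indeed realisable via $\gamma(e)=\{3,3'\}$ and $\{-3,-3'\}$.

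It remains to rule out $v=0$, which is the delicate case since $0$ has exactly three undirected neighbours and so passes the degree test. Here $\gamma(e)$ must contain all three of $0',1,-1$, and as $|\gamma(e)| \le 3$ this forces $\gamma(e) = \{0',1,-1\}$ exactly. But then the directed cycle $0 \to 2 \to 3 \to 4 \to 0$ (using the red edge $0 \to 2$, the undirected edges joining $2,3$ and $3,4$, and the blue edge $4 \to 0$) avoids $\gamma(e)$ entirely, so $0$ lies in a strong component of $D_2 - \gamma(e)$ of size greater than one, contradicting that $\{0\}$ is a strong component. Hence $0$ is not a leaf, and every leaf is $4$ or $-4$, giving the claim. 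The main obstacle is precisely this last step: the degree bound alone does not settle $0$, and one has to exhibit a concrete surviving cycle; all other cases are routine once the digon observation is in place.
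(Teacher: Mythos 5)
Your proof is correct and follows essentially the same route as the paper's: singleton leaf bags must be strong components, so all digon-neighbours lie in the guard of size at most $3$, which excludes every vertex except $0$, $4$, $-4$ by degree counting, and $0$ is then ruled out by a closed walk avoiding $\{0',1,-1\}$. Your only additions are making explicit what the paper leaves implicit, namely that leaf bags are nonempty and the concrete witnessing cycle $0 \to 2 \to 3 \to 4 \to 0$.
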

	\begin{proof*}
		Let $l \in V(T)$ be a leaf of $T$ and $e = (s,l) \in E(T)$. Then by \ref{D3.2}, $\beta(l)$ is the vertex set of a strong component of $D_2 - \gamma(e)$. Since $w(\mathcal{T}) = 3$, we have $|\Gamma(l)| \leq 4$. Every $v \in V(D_2) - \{0, 4, -4\}$ has at least four neighbours connected by an undirected edge. Therefore, no such $v$ can be contained in $\beta(l)$. If $\beta(l) = \{0\}$, then $\gamma(e)$ must contain more than three vertices because there are closed walks starting from $0$ that do not contain the three neighbours $\{-1,0',1\}$ of $0$ connected by an undirected edge. Hence, $0 \notin \beta(l)$. Then we have either $\beta(l) = \{4\}$ with $\{3,3'\} \subseteq \gamma(e)$ or $\beta(l) = \{-4\}$ with $\{-3, -3'\} \subseteq \gamma(e)$, subject to $|\gamma(e)| \leq 3$, and $4 \notin \gamma(e)$ and $-4 \notin \gamma(e)$, respectively.
	\end{proof*}

	Due to symmetry we may assume that $4$ is contained in a leaf of $T$.
	
	\begin{claim} \label{claim:c5.4.2}
		$\mathcal{T}$ has at most one node $b \in V(T)$ of out-degree $2$ (we say $b$ is a \emph{branching node}, and it \emph{branches}), and every other node has out-degree at most $1$.
	\end{claim}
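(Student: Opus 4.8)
The plan is to prove \cref{claim:c5.4.2} by a purely combinatorial argument about the rooted tree $T$, feeding in only the fact that $\mathcal{T}$ has at most two leaves, which was already established in \cref{claim:c5.4.1}. The guiding intuition is that branching in a finite rooted tree is ``paid for'' by leaves: every extra child a node has must eventually terminate in a new leaf, so with only two leaves available there is almost no room left to branch.

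First I would record the elementary edge-counting identity for the finite rooted tree $T$. Writing $L$ for the number of leaves of $T$ and $\mathrm{outdeg}(t)$ for the number of children of a node $t$, we have $\sum_{t \in V(T)} \mathrm{outdeg}(t) = |E(T)| = |V(T)| - 1$, and therefore $\sum_{t \in V(T)} \bigl(\mathrm{outdeg}(t) - 1\bigr) = -1$. Splitting off the leaves (which have out-degree $0$) and the nodes of out-degree $1$ (which contribute $0$), this rearranges to
\[
	\sum_{t \in V(T),\, \mathrm{outdeg}(t) \geq 2} \bigl(\mathrm{outdeg}(t) - 1\bigr) = L - 1.
\]
The left-hand side is exactly the total ``branching excess'' of $T$, and the identity makes precise the intuition that each unit of excess is matched by one additional leaf (finiteness of $T$ guarantees that every subtree rooted at a child of a branching node indeed contains a leaf).

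Next I would invoke \cref{claim:c5.4.1}, which gives $L \leq 2$ and hence bounds the right-hand side above by $1$. Since every summand on the left is a nonnegative integer, the total being at most $1$ forces a strong conclusion: a single node of out-degree $3$ or more would already contribute $\mathrm{outdeg}(t)-1 \geq 2$, and two distinct nodes each of out-degree at least $2$ would together contribute at least $2$; both situations are impossible. Consequently no node of $T$ has out-degree $3$ or more, and at most one node has out-degree exactly $2$, which is precisely the assertion of the claim (this single node, if present, being the branching node $b$).

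I do not expect a genuine obstacle here, as the core is elementary tree combinatorics and the only structural input is the leaf bound of \cref{claim:c5.4.1}. The one point that merits care is to apply the counting identity to the rooted tree $T$ itself, so that $\mathrm{outdeg}(t)$ means the number of children (the edges of $T$ being oriented away from the root, consistent with the definition of $T_t$), and to use the finiteness of $T$ when asserting that the subtree hanging from each child of a branching node actually contains a leaf; both are immediate in the present setting.
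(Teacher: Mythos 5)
Your proof is correct and takes essentially the same approach as the paper: the paper's own argument is the one-line observation that every outgoing edge leads to a subtree containing at least one leaf, which combined with \cref{claim:c5.4.1} yields the claim, and your edge-counting identity is simply a formalisation of that same leaf-accounting intuition.
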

	
	\begin{proof*} 
		As every outgoing edge has at least one leaf and due to \cref{claim:c5.4.1}, the claim holds.
	\end{proof*}

        \begin{figure} [!ht]
		\centering
		\begin{subfigure}[t]{.2\linewidth}
	\centering
\begin{tikzpicture}[scale=.8,transform shape,level 1/.style={sibling distance=3cm},level 2/.style={sibling distance=1.5cm}, level 3/.style={sibling distance=1cm}]
	\tikzstyle{edge from parent}=[> = latex, ->, thick, draw]
	\tikzset{tree/.style={draw,rounded corners,thick, fill=blue!20, minimum size=0.6cm, level distance=1.25cm,sibling distance=2cm,inner sep=2pt}}
	\tikzset{rec/.append style={rectangle,draw=black, thin, fill=white, inner sep=1.2pt}}
	\tikzset{blank/.style={circle, ultra thin, minimum size=10mm}}
	\tikzset{treeb/.style={draw,rounded corners,thick, fill=cyan!20, minimum size=0.6cm, level distance=1.25cm,sibling distance=2cm,inner sep=2pt}}
	
	\node [treeb] {$b$}
	child {node [tree] {$4$}
		edge from parent
		node[rec,left] {$\{3,3'\} \subseteq$}
	}
	child {node [blank] {}
		edge from parent
	};
	
\end{tikzpicture}
\subcaption{}
\end{subfigure}%
\begin{subfigure}[t]{.2\linewidth}
	\centering
	\begin{tikzpicture}[scale=.8,transform shape,level 1/.style={sibling distance=3cm},level 2/.style={sibling distance=1.5cm}, level 3/.style={sibling distance=1cm}]
		\tikzstyle{edge from parent}=[> = latex, ->, thick, draw]
		\tikzset{tree/.style={draw,rounded corners,thick, fill=blue!20, minimum size=0.6cm, level distance=1.25cm,sibling distance=2cm,inner sep=2pt}}
		\tikzset{rec/.append style={rectangle,draw=black, thin, fill=white, inner sep=1pt}}
		\tikzset{blank/.style={circle, ultra thin, minimum size=10mm}}
		\tikzset{treeb/.style={draw,rounded corners,thick, fill=cyan!20, minimum size=0.6cm, level distance=1.25cm,sibling distance=2cm,inner sep=2pt}}
		
		\node [treeb] {$b$}
		child {node [tree] {$3$}
			child {node [tree] {$4$}
				edge from parent
				node[rec] {$\{3,3'\} \subseteq$}
			}
			edge from parent
			node[rec] {$2,2',3'$}
		}
		child {node [blank] {}
			edge from parent
		};
		
	\end{tikzpicture}
	\subcaption{}
\end{subfigure}%
\begin{subfigure}[t]{.2\linewidth}
	\centering
	\begin{tikzpicture}[scale=.8,transform shape,level 1/.style={sibling distance=3cm},level 2/.style={sibling distance=1.5cm}, level 3/.style={sibling distance=1cm}]
		\tikzstyle{edge from parent}=[> = latex, ->, thick, draw]
		\tikzset{tree/.style={draw,rounded corners,thick, fill=blue!20, minimum size=0.6cm, level distance=1.25cm,sibling distance=2cm,inner sep=2pt}}
		\tikzset{rec/.append style={rectangle,draw=black, thin, fill=white, inner sep=1pt}}
		\tikzset{blank/.style={circle, ultra thin, minimum size=10mm}}
		\tikzset{treeb/.style={draw,rounded corners,thick, fill=cyan!20, minimum size=0.6cm, level distance=1.25cm,sibling distance=2cm,inner sep=2pt}}
		
		\node [treeb] {$b$}
		child {node [tree] {$3'$}
			child {node [tree] {$4$}
				edge from parent
				node[rec] {$\{3,3'\} \subseteq$}
			}
			edge from parent
			node[rec] {$2,2',3$}
		}
		child {node [blank] {}
			edge from parent
		};
		
	\end{tikzpicture}
	\subcaption{}
\end{subfigure}%
\begin{subfigure}[t]{.2\linewidth}
	\centering
	\begin{tikzpicture}[scale=.8,transform shape,level 1/.style={sibling distance=3cm},level 2/.style={sibling distance=1.5cm}, level 3/.style={sibling distance=1cm}]
		\tikzstyle{edge from parent}=[> = latex, ->, thick, draw]
		\tikzset{tree/.style={draw,rounded corners,thick, fill=blue!20, minimum size=0.6cm, level distance=1.25cm,sibling distance=2cm,inner sep=2pt}}
		\tikzset{rec/.append style={rectangle,draw=black, thin, fill=white, inner sep=1pt}}
		\tikzset{blank/.style={circle, ultra thin, minimum size=10mm}}
		\tikzset{treeb/.style={draw,rounded corners,thick, fill=cyan!20, minimum size=0.6cm, level distance=1.25cm,sibling distance=2cm,inner sep=2pt}}
		
		\node [treeb] {$b$}
		child {node [tree] {$3'$}
			child {node [tree] {$3$}
				child {node [tree] {$4$}
					edge from parent
					node[rec] {$\{3,3'\} \subseteq$}
				}
				edge from parent
				node[rec] {$2,2',3'$}
			}
			edge from parent
			node[rec,left] {$\{2,2'\} \subseteq$}
		}
		child {node [blank] {}
			edge from parent
		};
		
	\end{tikzpicture}
	\subcaption{}
\end{subfigure}%
\begin{subfigure}[t]{.2\linewidth}
	\centering
	\begin{tikzpicture}[scale=.8,transform shape,level 1/.style={sibling distance=3cm},level 2/.style={sibling distance=1.5cm}, level 3/.style={sibling distance=1cm}]
		\tikzstyle{edge from parent}=[> = latex, ->, thick, draw]
		\tikzset{tree/.style={draw,rounded corners,thick, fill=blue!20, minimum size=0.6cm, level distance=1.25cm,sibling distance=2cm,inner sep=2pt}}
		\tikzset{rec/.append style={rectangle,draw=black, thin, fill=white, inner sep=1pt}}
		\tikzset{blank/.style={circle, ultra thin, minimum size=10mm}}
		\tikzset{treeb/.style={draw,rounded corners,thick, fill=cyan!20, minimum size=0.6cm, level distance=1.25cm,sibling distance=2cm,inner sep=2pt}}
		
		\node [treeb] {$b$}
		child {node [tree] {$3$}
			child {node [tree] {$3'$}
				child {node [tree] {$4$}
					edge from parent
					node[rec] {$\{3,3'\} \subseteq$}
				}
				edge from parent
				node[rec] {$2,2',3$}
			}
			edge from parent
			node[rec,left] {$\{2,2'\} \subseteq$}
		}
		child {node [blank] {}
			edge from parent
		};
		
	\end{tikzpicture}
	\subcaption{}
\end{subfigure}%
\caption [An auxiliary figure used in the proof of \cref{thm:NCW<SCE}]{An \dtdSCEV of $D_2$ in \cref{fig:D2} of width $3$ satisfying the assumption of \cref{thm:NCW<SCE} contains one of the configurations (a)-(e).}
\label{fig:dtd3_configurations}
	\end{figure}
	
	\begin{claim}\label{claim:c5.4.3}
		W.l.o.g., assume that $4$ is a leaf of $T$. Then $\mathcal{T}$ contains one of the configurations (a)-(e) depicted in \cref{fig:dtd3_configurations}.
	\end{claim}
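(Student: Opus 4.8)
The plan is to follow the unique path from the leaf $4$ upward in $T$ and to argue that, before reaching the branching node $b$ guaranteed by \cref{claim:c5.4.2}, this path can only collect the vertices $3$ and $3'$ (in one of the two orders), which is exactly what the five configurations encode. The only structural facts I will use are that for every edge $e=(\pred{t},t)$ of the \dtdSCEV\ the set $\beta(T_t)$ is a strong component of $D_2-\gamma(e)$, and in particular disjoint from $\gamma(e)$, and that every guard has size at most $3$, since $w(\mathcal{T})=3$ while all bags are singletons or empty. Combining this with \cref{claim:c5.4.1} (the edge into the leaf $4$ has a guard containing $\{3,3'\}$) will let me read off the local shape around $4$.

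First I would record the relevant local behaviour of $D_2$. Because the only in-neighbours of $4$ are $3$ and $3'$ while its out-neighbours are $0,0',3,3'$, any single vertex $v$ with $\{v,4\}$ strongly connected must satisfy $v\in\{3,3'\}$. Next, $\{3,4\}$ (resp.\ $\{3',4\}$) has external undirected neighbourhood $\{2,2',3'\}$ (resp.\ $\{2,2',3\}$), and $\{3,3',4\}$ has external undirected neighbourhood $\{2,2'\}$. The decisive computation is that none of $\{3,4\}$, $\{3',4\}$, $\{3,3',4\}$ can be enlarged to a strong component of $D_2-\gamma$ with $|\gamma|\le 3$ by adding one more vertex: the only bidirectionally adjacent candidates are $2$ and $2'$, but adding either of them creates the cycle $0\to 2\to 3\to 4\to 0$ through the red arc $0\to 2$ and the blue arc $4\to 0$ (and symmetrically for $0'$), so that $0$ and $0'$ are forced into the component; together with the undirected neighbours $1,1'$ and the remaining one of $2,2'$, this already requires at least five guards, a contradiction.

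I would then exclude empty bags along the path by means of \cref{lem:delete_emptybag_lemma_sub}. If the parent $p$ of $4$ were empty, then $\beta(T_p)=\{4\}$, so both incident guards $\gamma((\pred{p},p))$ and $\gamma((p,4))$ contain $\{3,3'\}$ (the in-neighbours of $4$); hence $\{4\}$ is a strong component of $D_2-(\gamma((\pred{p},p))\cap\gamma((p,4)))$, and the bag is deletable. The same reasoning excludes an empty bag directly above a node realising $\{3,4\}$, $\{3',4\}$ or $\{3,3',4\}$: in each case the guard below is forced (to $\{2,2',3'\}$, $\{2,2',3\}$, or a set containing $\{2,2'\}$) and the guard above is forced to satisfy the same containment, so either the two guards coincide or the third deletability condition applies, using that $\{3,3',4\}$ is a strong component of $D_2-\{2,2'\}$. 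Consequently every node strictly between $4$ and the branching node is labelled by a single vertex.

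Putting these together, starting from $\beta(T_4)=\{4\}$ and moving upward, each labelled ancestor enlarges the current strong component by exactly one vertex, and by the first two steps the only reachable chains are $\{4\}\subseteq\{3,4\}\subseteq\{3,3',4\}$ or $\{4\}\subseteq\{3',4\}\subseteq\{3,3',4\}$, with nothing beyond. Therefore the first ancestor of $4$ that is not forced to lie in $\{3,3'\}$ can be neither a labelled non-branching node (its bag would enlarge the component past the guard budget) nor an empty bag (deletable), so by \cref{claim:c5.4.2} it must be the branching node $b$. Reading off the at most three edges between $4$ and $b$, and filling in the guards from the external neighbourhoods above together with \cref{claim:c5.4.1}, yields precisely configurations (a)--(e) of \cref{fig:dtd3_configurations}. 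The main obstacle is the enlargement computation of the second paragraph, where the red and blue arcs must be tracked carefully, since they drag $0$ and $0'$ into any strong component containing $2$ or $2'$; once that is established, the remainder is routine bookkeeping with strong components and the no-deletable-empty-bag hypothesis.
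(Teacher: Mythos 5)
Your proof is correct and takes essentially the same route as the paper's: you walk up the tree from the leaf $4$, use \cref{claim:c5.4.1} for the guard $\{3,3'\}$ at the leaf, rule out empty bags via \cref{lem:delete_emptybag_lemma_sub}, force the chain $\{4\} \subseteq \{3,4\}$ or $\{3',4\} \subseteq \{3,3',4\}$, and derive the same contradiction (the red arcs $0 \to 2$, $0' \to 2'$ and blue arcs $4 \to 0$, $4 \to 0'$ drag $0,0'$ into any component containing $2$ or $2'$, forcing at least five guards) to conclude that the branching node is reached within three steps, which is exactly the paper's case analysis over $\pred{4}$, $\pred{\pred{4}}$ and $\pred{\pred{\pred{4}}}$ yielding configurations (a)--(e) of \cref{fig:dtd3_configurations}. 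The only difference is organisational: you factor the argument into a single enlargement computation plus an empty-bag exclusion rather than the paper's level-by-level cases.
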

	
	\begin{proof*}
		By the proof of \cref{claim:c5.4.1}, we have $\{3,3'\} \subseteq \gamma((\pred{4},4))$ with $|\gamma((\pred{4},4))| \leq 3$ and $4 \notin \gamma((\pred{4},4))$.
		If $\pred{4}$ branches, we are in the case (a). Otherwise, $\{3, 3'\}$ is an inclusion-wise minimal set for the guard of the leaf $4$, and every other candidate for the guard must contain $\{3, 3'\}$ because $3, 3'$ are the neighbours of $4$ connected by an undirected edge. Moreover, $\{4\}$ is the vertex set of a strong component of $D_2 - \{3,3'\}$. Hence, if the predecessor of the leaf is empty, then it is a deletable empty bag. Then by the assumption, the predecessor of the leaf is not empty.
		By a similar argument used in the proof of \cref{claim:c5.4.1}, we have either $\pred{4} = 3$ with $\gamma((\pred{3},3)) = \{2,2',3'\}$ or $\pred{4} = 3'$ with $\gamma((\pred{3'},3')) = \{2,2',3\}$. If $\pred{\pred{4}}$ branches, then we are in the case (b) or (c). 
		
		Otherwise, by a similar argument as above we have either $\pred{3} = 3'$ with $\{2,2'\} \subseteq \gamma((\pred{3'},3'))$, $|\gamma((\pred{3'},3'))| \leq 3$ and $\{3,3',4\} \cap \gamma((\pred{3'},3')) = \emptyset$, or $\pred{3'} = 3$ with $\{2,2'\} \subseteq \gamma((\pred{3},3))$, $|\gamma((\pred{3},3))| \leq 3$ and $\{3,3',4\} \cap \gamma((\pred{3},3)) = \emptyset$.
		If $\pred{\pred{\pred{4}}}$ branches, we are in the case (d) or (e).
		Otherwise, let $v = \pred{\pred{\pred{4}}}$, and suppose $v$ does not branch. 
		By a similar argument as above $v$ is either $2$ or $2'$. Then we have $|\Gamma(v)| \geq 5$ since we need either $\{0,0',1,1',2'\} \subseteq \gamma((\pred{2},2))$ to guard $\beta(T_2) = \{2, 3, 3', 4\}$ or $\{0,0',1',2\} \subseteq \gamma((\pred{2'},2'))$ to guard $\beta(T_{2'}) = \{2', 3, 3', 4\}$. Hence, $v$ branches, and $\mathcal{T}$ contains one of the configurations (a)-(e).
	\end{proof*}
	
	\begin{claim}\label{claim:c5.4.4}
		$\mathcal{T}$ contains a branching node and has precisely two leaves, $4$ and $-4$.
	\end{claim}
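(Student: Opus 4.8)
The plan is to derive Claim~\ref{claim:c5.4.4} purely by combining the three preceding claims. First I would observe that $T$, being a finite nonempty rooted tree, has at least one leaf; by \cref{claim:c5.4.1} every leaf is either $4$ or $-4$, so at least one of these is a leaf, and by the symmetry of $D_2$ we may assume (exactly as in \cref{claim:c5.4.3}) that $4$ is a leaf of $T$.

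To produce the branching node, I would invoke \cref{claim:c5.4.3}: since $4$ is a leaf, $\mathcal{T}$ realises one of the configurations (a)--(e) of \cref{fig:dtd3_configurations}, and each of these configurations exhibits a node $b$ of out-degree $2$. Hence $\mathcal{T}$ contains a branching node, which already settles the first assertion of the claim.

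For the second assertion, I would use \cref{claim:c5.4.2}, which guarantees that $b$ is the \emph{unique} node of out-degree $2$, while every other node of $T$ has out-degree at most $1$. A short counting argument then determines the number of leaves exactly: writing $L$ for the number of leaves (the nodes of out-degree $0$) and using $\sum_{v \in V(T)} \mathrm{outdeg}(v) = |V(T)| - 1$ for a tree, we obtain $\sum_{v \in V(T)} (\mathrm{outdeg}(v) - 1) = -1$. Since the only admissible out-degrees are $0$, $1$, and (for $b$ alone) $2$, this sum evaluates to $-L + 1$, whence $L = 2$. Finally, \cref{claim:c5.4.1} identifies these two leaves as precisely $4$ and $-4$, completing the proof.

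The argument is essentially bookkeeping layered on top of the earlier claims, so no step is computationally demanding; the only genuine content is the \emph{existence} of the branching node, which is where \cref{claim:c5.4.3} carries the weight through its case analysis. The one subtlety I would be careful about is that the counting step relies on the unique-branching-node property of \cref{claim:c5.4.2} holding \emph{globally} over all of $T$, not merely in the vicinity of the leaf $4$; this is what rules out a second hidden branch elsewhere and thereby forces $4$ and $-4$ to be leaves simultaneously.
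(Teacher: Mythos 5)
Your proposal is correct and follows exactly the route the paper takes: its proof of this claim is literally the one-line citation of Claims~\ref{claim:c5.4.1}, \ref{claim:c5.4.2} and \ref{claim:c5.4.3}, and your write-up simply makes explicit the bookkeeping that combination requires (existence of $b$ from the configurations, uniqueness from Claim~\ref{claim:c5.4.2}, and the out-degree count forcing exactly two leaves, which by Claim~\ref{claim:c5.4.1} and the disjointness of bags must be $4$ and $-4$). No gaps; your counting identity $\sum_{v}(\mathrm{outdeg}(v)-1)=-1$ is the standard tree fact and is applied correctly.
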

	\begin{proof*}
		By \cref{claim:c5.4.1,claim:c5.4.2,claim:c5.4.3}.
	\end{proof*}

	\begin{claim}\label{claim:c5.4.5}
		$\mathcal{T}$ contains one of the configurations in \cref{fig:dtd3_configurations} for the leaf $4$ and another of their negative counterparts for the leaf $-4$ with the same $b$.
	\end{claim}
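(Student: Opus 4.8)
The plan is to piece together the two structural descriptions furnished by \cref{claim:c5.4.3}, using the symmetry of $D_2$ together with the uniqueness of the branching node. By \cref{claim:c5.4.4}, $\mathcal{T}$ has exactly one branching node $b$ and precisely the two leaves $4$ and $-4$. Since every node other than $b$ has out-degree at most one (\cref{claim:c5.4.2}), the tree $T$ is a directed path from its root down to $b$, from which exactly two branches emanate, one terminating in the leaf $4$ and the other in the leaf $-4$.

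First I would invoke \cref{claim:c5.4.3} for the leaf $4$, which is legitimate as $4$ is indeed a leaf of $T$. This places one of the configurations (a)--(e) of \cref{fig:dtd3_configurations} along the branch of $T$ towards $4$; in each of these configurations the topmost node has out-degree $2$, so by the uniqueness in \cref{claim:c5.4.4} it must be the branching node $b$ of $\mathcal{T}$.

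Next, the key observation is that the map negating all labels (fixing $0,0'$ and sending $i \mapsto -i$ and $i' \mapsto -i'$) is an automorphism of $D_2$. Hence the entire chain of reasoning behind \cref{claim:c5.4.1,claim:c5.4.2,claim:c5.4.3} applies verbatim to the leaf $-4$, yielding the negative counterpart of one of the configurations (a)--(e) along the branch of $T$ towards $-4$, whose topmost node is again a branching node and therefore equal to $b$.

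The only remaining point to verify, and the step I expect to be the real (if mild) obstacle, is that these two configurations share their branching node. This is immediate from \cref{claim:c5.4.4}: there is a single branching node, so the topmost node of the positive configuration and that of its negative counterpart both coincide with $b$. Consequently $b$ has one outgoing branch realising a positive configuration ending in $4$ and one outgoing branch realising a negative configuration ending in $-4$, so $\mathcal{T}$ contains both simultaneously with the same $b$, as claimed.
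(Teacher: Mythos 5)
Your proposal is correct and takes essentially the same route as the paper, whose entire proof of this claim reads ``By \cref{claim:c5.4.3,claim:c5.4.4} and symmetry.'' You have simply made that one-liner explicit: applying \cref{claim:c5.4.3} to the leaf $4$, transporting it to the leaf $-4$ via the label-negating automorphism of $D_2$, and using the uniqueness of the branching node (from \cref{claim:c5.4.2,claim:c5.4.4}) to conclude that both configurations hang off the same node $b$.
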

	\begin{proof*}
		By \cref{claim:c5.4.3,claim:c5.4.4} and symmetry.
	\end{proof*}
	
	Due to \cref{claim:c5.4.2,claim:c5.4.5}, we have $\{-2,-2',-1,-1',0,0',1,1',2,2'\} \subseteq \beta(b)$ and $|\Gamma(b)| > 4$, a contradiction.
\end{proof}

The above proof essentially shows that it is a strong requirement that subtrees must be disjoint from their guards in a directed tree decomposition, i.e.~$\beta(T_t) \cap \gamma(e) = \emptyset$ for all $e = (s,t) \in V(T)$ in a directed tree decomposition $\mathcal{T} \coloneqq $ $(T, \beta, \gamma)$. 
By the proof of \cref{thm:c&r_game}, the \dtdNCW\ in \cref{fig:dtdNCW_D2} yields a winning strategy for $4$ cops on $D_2$. It is noteworthy that the winning strategy is not robber-monotone.

\begin{corollary} \label{cor:D2_c&r_game}
		Let $D_2$ be the digraph depicted in \cref{fig:D2}. Then $4$ cops have a winning strategy in the cops-and-robber game on $D_2$. Furthermore, it holds that $\dtwSCE{D_2} \geq 4$.
\end{corollary}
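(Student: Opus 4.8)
The plan is to read off both assertions from facts that are already in place, so the argument will be a short assembly rather than a fresh proof. The statement about the cops-and-robber game will come from the first part of \cref{thm:c&r_game} together with the explicit decomposition in \cref{fig:dtdNCW_D2}, and the inequality $\dtwSCE{D_2} \geq 4$ is nothing other than the lower bound already established inside the proof of \cref{thm:NCW<SCE}.

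For the winning strategy of four cops, I would start from the \dtdNCW\ of $D_2$ depicted in \cref{fig:dtdNCW_D2}, which has width $3$ and hence witnesses $\dtwNCW{D_2} \leq 3$. Since the first part of \cref{thm:c&r_game} is phrased for $\dtw{D}$, i.e.~for every notion of directed \treewidth introduced in \cref{sec:dtw_definitions} and in particular for \dtwNCW{}, I may apply it with $k = 4$: from $\dtwNCW{D_2} \leq 3 < 4$ it follows immediately that $4$ cops have a winning strategy on $D_2$. The only point to verify is that the hypothesis $\dtw{D_2} < 4$ is genuinely met for the \dtwNCW{} variant, which it is by the decomposition just cited.

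For the bound $\dtwSCE{D_2} \geq 4$ there is nothing new to prove: this inequality was derived verbatim in the course of proving \cref{thm:NCW<SCE}, through the case analysis over the configurations (a)--(e) of \cref{fig:dtd3_configurations} (\cref{claim:c5.4.1,claim:c5.4.2,claim:c5.4.3,claim:c5.4.4,claim:c5.4.5}), which culminates in the contradiction $|\Gamma(b)| > 4$. I would simply invoke that part of the proof. Thus both statements follow, and there is no real obstacle to overcome here beyond citing the correct prior results; the genuine difficulty lay entirely in the earlier lower-bound analysis and in exhibiting the width-$3$ decomposition of \cref{fig:dtdNCW_D2}.
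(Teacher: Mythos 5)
Your proposal is correct and matches the paper's own proof exactly: the winning strategy for $4$ cops follows from the width-$3$ \dtdNCW\ in \cref{fig:dtdNCW_D2} together with the first part of \cref{thm:c&r_game}, and the bound $\dtwSCE{D_2} \geq 4$ is cited directly from the proof of \cref{thm:NCW<SCE}. No further comment is needed.
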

\begin{proof}
	Due to the first part of \cref{thm:c&r_game} and the \dtdNCW\ in \cref{fig:dtdNCW_D2}, $4$ cops have a winning strategy on $D_2$. By the proof of \cref{thm:NCW<SCE}, we have $\dtwSCE{D_2} \geq 4$.
\end{proof}
The above corollary shows that the converse of the first part of \cref{thm:c&r_game} does not hold for \tdtwSCE.

\begin{corollary} \label{cor:D2_obstructions}
	Let $D_2$ be the digraph depicted in \cref{fig:D2}. Then, $D_2$ has no haven of order $5$, no bramble of order $5$, and no $4$-linked set.
\end{corollary}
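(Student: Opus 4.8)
The plan is to derive all three nonexistence statements from the single fact, already established in \cref{cor:D2_c&r_game}, that $4$ cops have a winning strategy in the cops-and-robber game on $D_2$, by chaining together the obstruction lemmas from the preliminaries. This mirrors exactly the structure used for $D_1$ in \cref{cor:D1_haven,cor:D1_bramble,cor:D1_k_linked_set}, so no new combinatorial work on the graph itself is needed.

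First I would handle the haven. Since $4$ cops win on $D_2$, applying the first part of \cref{thm:theorem_haven} with $k = 4$ immediately gives that $D_2$ has no haven of order $k+1 = 5$. Next, for the bramble, I would argue by contradiction: if $D_2$ contained a bramble of order $5$, then by \cref{lem:lemma_bramble_haven} (with $k = 5$) it would contain a haven of order $5$, contradicting the previous step; hence $D_2$ has no bramble of order $5$. Finally, for the linked set, I would again argue by contradiction: if $D_2$ contained a $4$-linked set, then by \cref{lem:lemma_bramble_k_linked_set} (with $k = 4$) it would contain a bramble of order $4+1 = 5$, contradicting the bramble statement just proved; hence $D_2$ contains no $4$-linked set.

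There is essentially no obstacle in this corollary — it is a mechanical consequence of the implication chain (winning strategy $\Rightarrow$ no haven $\Rightarrow$ no bramble $\Rightarrow$ no linked set) applied with $k = 4$. All the genuine difficulty lay upstream, in showing $\dtwSCE{D_2} \geq 4$ and that $4$ cops suffice, which is why I would keep the proof short and simply cite \cref{cor:D2_c&r_game,thm:theorem_haven,lem:lemma_bramble_haven,lem:lemma_bramble_k_linked_set} in sequence. The only point to be careful about is bookkeeping the index shifts correctly, i.e.~using $k+1 = 5$ for the haven, order $5$ for the bramble, and $k = 4$ for the linked set, so that each application matches the order in which the obstruction lemmas are stated.
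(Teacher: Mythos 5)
Your proof is correct and follows exactly the same chain as the paper's own proof: the winning strategy for $4$ cops from \cref{cor:D2_c&r_game}, then the first part of \cref{thm:theorem_haven} to exclude a haven of order $5$, then \cref{lem:lemma_bramble_haven} to exclude a bramble of order $5$, then \cref{lem:lemma_bramble_k_linked_set} to exclude a $4$-linked set. In fact your version is slightly cleaner, since the paper's one-line proof cites \cref{cor:D1_c&r_game} where it evidently means \cref{cor:D2_c&r_game}, a slip you implicitly correct.
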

\begin{proof}
	Due to \cref{cor:D1_c&r_game}, the first part of \cref{thm:theorem_haven}, \cref{lem:lemma_bramble_haven,lem:lemma_bramble_k_linked_set}.
\end{proof}
The above corollary shows that the following does not hold for \tdtwSCE : the converse of the second part of \cref{thm:theorem_haven}, the converse of the first part of \cref{cor:bramble_dtw}, the converse of \cref{lem:lemma_balanced_w_separator}.
The result of \cref{cor:D1_c&r_game,cor:D2_c&r_game} indicate that the \tdtwSCE\ of a digraph $D$, along with \dtwNW{}-, \dtwNCW{}- and \tdtwSCD , is not equal to the minimal number of cops needed to win minus one in the cops-and-robber game on $D$.
Note that due to the relation shown in \cref{fig:figure_dtw_compare} and the \dtdNCW\ in \cref{fig:dtdNCW_D2}, we have $\dtwNCWE{D_2} \leq 3$.

\subsection{A counterexample to the closure of robber-monotone winning strategies}
\label{sec:c&r_of_B_minor}

In this \namecref{sec:c&r_of_B_minor}, we consider another question to which the digraph $D_2$, see \cref{fig:D2}, yields a counterexample.
It shows that while the cops-and-robber games are closed under taking butterfly minors (see \cite{ganian_are_2016} for similar work), this is not the case if the cops have to play in a robber-monotone way.

\begin{observation} \label{obs:c&r_game_closed_under_b_minor}
	Let $D$, $D'$ be digraphs such that $D' \preccurlyeq_b D$.
	If $k$ cops have a winning strategy on $D$, then $k$ cops have a winning strategy on $D'$.
\end{observation}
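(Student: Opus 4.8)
The plan is to prove the statement by having the cops on the smaller digraph $D'$ \emph{shadow} a winning play on the larger digraph $D$. Since $D' \preccurlyeq_b D$ means $D'$ is obtained from a subgraph $H \subseteq D$ by a finite sequence of butterfly contractions, and since ``$k$ cops can force a win'' is preserved along such a chain, I would reduce to three elementary steps: deleting a vertex, deleting an edge, and contracting a single butterfly-contractible edge. Deletions only shrink $H$ to a subgraph of $D$, so I would treat them together as the subgraph case $H \subseteq D$, and then handle one contraction; composing the resulting strategies along the sequence gives the general claim. In each step I fix a winning strategy $f$ for $k$ cops on the larger digraph and keep a \emph{phantom} robber on it whose position mirrors that of the real robber; the only thing to verify is that every legal robber move downstairs lifts to a legal phantom move upstairs, and that a capture upstairs forces a capture downstairs.

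For the subgraph case $H \subseteq D$ this is routine. The cops on $H$ keep the phantom robber on the same vertex (legitimate since $\V{H}\subseteq\V{D}$) and, whenever $f$ moves the phantom cops to $C'$, they play $C'\cap \V{H}$. Every real robber move uses directed paths in $H$, which are also directed paths in $D$; as these paths only visit $\V{H}$, avoiding $(C_i\cap C_{i+1})\cap \V{H}$ is the same as avoiding $C_i\cap C_{i+1}$, so each real move is a legal phantom move. The shadow play is therefore consistent with $f$, the phantom robber is caught, and since the capture vertex lies in $\V{H}$ it is a capture in $H$ as well.

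The contraction case is the crux. Suppose $D'$ arises from $D$ by contracting a butterfly-contractible edge $(u,v)$ into $x$; by reversing all edges (under which strong components, the robber-move condition, and capture are all invariant) I may assume $\Outdeg{}{u}=1$. Let $\phi\colon \V{D}\to\V{D'}$ send $u,v\mapsto x$ and fix all other vertices, and send a real robber position $p$ to the phantom position $p$ when $p\ne x$ and to $v$ when $p=x$ — the representative singled out by \cref{lem:walkInMinor} when $\Outdeg{}{u}=1$. The cops on $D'$ run a phantom play on $D$, occupying $\phi(C_i)$ while remembering $C_i$ and setting $C_{i+1}=f(C_i,\cdot)$. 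I would then prove: a robber move from $p$ to $p'$, certified by a closed walk $W'$ through $p,p'$ in $D'-S'$ with $S'=\phi(C_i)\cap\phi(C_{i+1})$ the stationary real cops, lifts to a phantom move certified in $D-S$ with $S=C_i\cap C_{i+1}$. If $W'$ avoids $x$ it is literally a closed walk in $D$ avoiding $\{u,v\}$, and $S,S'$ agree on the remaining vertices, so it already certifies the phantom move. If $W'$ uses $x$, then $x\notin S'$, and the key observation is that $x\notin\phi(C_i)\cap\phi(C_{i+1})$ forces $u,v\notin S$: whichever of $C_i,C_{i+1}$ contains neither $u$ nor $v$ empties the relevant part of the intersection. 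Hence $D'-S'=(D-S)/(u,v)$ with $(u,v)$ still butterfly-contractible in $D-S$, so \cref{lem:walkInMinor} converts $W'$ into a closed walk $W$ in $D-S$ that contains $v$ and all of $V(W')-\{x\}$, and therefore the phantom images of $p$ and $p'$. Either way the phantom move is legal, the shadow play is $f$-consistent, and the phantom capture transfers to a real capture because $\phi$ of the phantom capture vertex is exactly the real robber's vertex.

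The main obstacle is precisely the guard-set mismatch $\phi(C_i)\cap\phi(C_{i+1})\neq\phi(C_i\cap C_{i+1})$ resolved above: a naive translation loses track of which of $u,v$ a stationary cop sitting at $x$ stands for, and the whole point is that this ambiguity can only occur when \emph{neither} $u$ nor $v$ is stationary — which is exactly the regime in which \cref{lem:walkInMinor} applies verbatim. A secondary, purely formal issue is that the shadowing cops use the phantom cop set as memory, whereas the paper's strategies are memoryless functions of the position; I would dispatch this either by invoking positional determinacy of these finite reachability games, or, more cleanly, by instead proving the contrapositive and lifting a \emph{robber} strategy, in which case the phantom robber already survives against every cop play and in particular against the projected play $\phi(C_0),\phi(C_1),\dots$, so no reconstruction of cop positions is ever needed and the same two-case closed-walk lift does all the work.
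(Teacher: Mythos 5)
Your proposal is correct and takes essentially the same approach as the paper: the paper, too, restricts the winning strategy for deletions and, for a butterfly contraction of $(u,v)$ into $x$, lets a cop occupy $x$ whenever the strategy on $D$ would occupy $u$ or $v$, with \cref{lem:walkInMinor} providing the closed-walk correspondence that makes robber moves and captures transfer. The only difference is one of rigour, not of substance — the paper leaves this as an intuition paragraph, while you make the guard-set bookkeeping ($\phi(C_i)\cap\phi(C_{i+1})$ versus $\phi(C_i\cap C_{i+1})$) and the positional-determinacy point explicit.
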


The intuition behind the \namecref{obs:c&r_game_closed_under_b_minor} is that deleting vertices and edges or shrinking induced paths by butterfly contracting edges does not help the robber elude cops.
If $D'$ is obtained from $D$ by deleting some vertices and edges, then the cops' winning strategy on $D$ can be used on $D'$ to win, where the cops occupy the vertices that remain in $D'$. 
Let us assume that $D'$ is obtained from $D$ by butterfly contracting $e = (s,t) \in E(D)$ into the vertex $x \in V(D')$.
If there is a closed walk $W$ that passes $s$ or $t$ or both in $D$, then there is also a closed walk in $D'$ that passes $x$ instead of $s$ or $t$ or both and passes the same vertices of $W$ in the same order. Furthermore, the converse is also true by \cref{lem:walkInMinor}.
Therefore, if a cop has to occupy $s$ or $t$ (or possibly two cops have to occupy both) at some point in the winning strategy on $D$, then a cop can occupy $x$ in $D'$ instead.

Regarding the above \namecref{obs:c&r_game_closed_under_b_minor}, one might ask whether the number of cops needed to win the game in a robber-monotone way is closed under taking butterfly minors. The following counterexample shows that the answer is negative.

\begin{figure}[!ht]
\begin{center}
\begin{tikzpicture}
	[
	state/.style={circle, draw=black, thick, minimum size=5mm, font=\small,inner sep=0,outer sep=0, minimum size=5.5mm},
	blank/.style={circle, ultra thin, minimum size=10mm},
	> = latex, 
	shorten > = 1pt, 
	auto,
	node distance = 1.5cm, 
	line width= 0.3mm
	,scale=.8,transform shape
	]

	\node[state] (0) {$0$};
	\node[state] (0') [below of=0] {$0'$};
	\node[state] (1) [right of=0] {$1$};
	\node[state] (1') [below of=1] {$1'$};
	\node[state] (2) [right of=1] {$2$};
	\node[state] (2') [below of=2] {$2'$};
	\node[state] (3) [right of=2] {$3$};
	\node[state] (3') [below of=3] {$3'$};
	\node[state] (4) [below right=0.35cm and 1cm of 3] {$4$};
	\node[state] (5) [right of=4] {$5$};
	
	\node[state] (-1) [left of=0] {$-1$};
	\node[state] (-1') [below of=-1] {$-1'$};
	\node[state] (-2) [left of=-1] {$-2$};
	\node[state] (-2') [below of=-2] {$-2'$};
	\node[state] (-3) [left of=-2] {$-3$};
	\node[state] (-3') [below of=-3] {$-3'$};
	\node[state] (-4) [below left=0.35cm and 1cm of -3] {$-4$};
	\node[state] (-5) [left of=-4] {$-5$};

	\path[-] (0) edge node {} (0');
	\path[-] (0) edge node {} (1);
	\path[-] (0') edge node {} (1');
	\path[-] (0') edge node {} (1);
	\path[-] (1) edge node {} (1');
	\path[-] (1) edge node {} (2);
	\path[-] (1) edge node {} (1');
	\path[-] (1') edge node {} (2');
	\path[-] (1') edge node {} (2);
	\path[-] (2) edge node {} (2');
	\path[-] (2) edge node {} (3);
	\path[-] (2) edge node {} (2');
	\path[-] (2') edge node {} (3');
	\path[-] (2') edge node {} (3);
	\path[-] (3) edge node {} (3');
	\path[-] (2) edge node {} (3');
	\path[-] (3) edge node {} (4);
	\path[-] (3') edge node {} (4);
	\path[->, red] (4) edge node {} (5);
	
	\path[-] (0) edge node {} (-1);
	\path[-] (0') edge node {} (-1');
	\path[-] (0') edge node {} (-1);
	\path[-] (-1) edge node {} (-1');
	\path[-] (-1) edge node {} (-2);
	\path[-] (-1) edge node {} (-1');
	\path[-] (-1') edge node {} (-2');
	\path[-] (-1') edge node {} (-2);
	\path[-] (-2) edge node {} (-2');
	\path[-] (-2) edge node {} (-3);
	\path[-] (-2) edge node {} (-2');
	\path[-] (-2') edge node {} (-3');
	\path[-] (-2') edge node {} (-3);
	\path[-] (-3) edge node {} (-3');
	\path[-] (-2) edge node {} (-3');
	\path[-] (-3) edge node {} (-4);
	\path[-] (-3') edge node {} (-4);
	\path[->, red] (-4) edge node {} (-5);
	
	\path[->, red] (0) edge node {} (2');
	\path[->, red] (0') edge node {} (2);
	\path[->, red] (0) edge[bend left=30] node[above] {} (2);
	\path[->, red] (0') edge[bend right=30] node[below] {} (2');
	\path[->, red] (0) edge node {} (-2');
	\path[->, red] (0') edge node {} (-2);
	\path[->, red] (0) edge[bend right=30] node[above] {} (-2);
	\path[->, red] (0') edge[bend left=30] node[below] {} (-2');
	
	\path[->, blue] (5) edge[bend right=50] node[above] {} (0);
	\path[->, blue] (5) edge[bend left=50] node[below] {} (0');
	\path[->, blue] (-5) edge[bend left=50] node[above] {} (0);
	\path[->, blue] (-5) edge[bend right=50] node[below] {} (0');
	
\end{tikzpicture}
\captionof{figure}[The digraph $D_2'$ from \cref{thm:c&r_game_closure}, $D_2$ is a butterfly minor of $D_2'$]{The digraph $D_2'$ from \cref{thm:c&r_game_closure}, a modification of \cite[Fig. 2]{adler_directed_2007}. The digraph $D_2$ in \cref{fig:D2} is a butterfly minor of $D_2'$.}
\label{fig:D2'}
\end{center}
\end{figure}

\ThmCopsAndRobbermonotonenotclosed*

\begin{proof}
    Let $D_2$, $D_2'$ again be digraphs depicted in \cref{fig:D2,fig:D2'}. Then $D_2 \preccurlyeq_b D_2'$.
	The following strategy is a robber-monotone winning strategy for $4$ cops on $D_2'$. The first position is $\{0,0',1,-1\}$. Due to symmetry, we may assume that the robber is in the positive part. Then the cops move to $\{0,0',1,5\}$ then to $\{0',1,1',5\}$, $\{1,1',2,5\}$, $\{1',2,2',5\}$, $\{2,2',3,5\}$, $\{2,2',3,3'\}$, $\{3,3',4\}$.
	We refer to \cite[Theorem 8]{adler_directed_2007} for the proof that the robber can win against $4$ cops following a robber-monotone strategy on~$D_2$.
\end{proof}

Note that $4$ cops have a (non-robber-monotone) winning strategy on $D_2$, so this does not contradict \cref{obs:c&r_game_closed_under_b_minor}.
The winning strategy is as follows: the first position for the cops is $\{0,0',1,-1\}$. Due to symmetry, we may assume that the robber is in the positive part. Then the cops move to $\{0,0',1,4\}$ then to $\{0',1,1',4\}$, $\{1,1',2,4\}$, $\{1',2,2',4\}$, $\{2,2',3,4\}$, $\{2,2',3,3'\}$, $\{3,3',4\}$. When cops switch their position from $\{2,2',3,4\}$ to $\{2,2',3,3'\}$, the robber can move from $3'$ to $4$, which was not included in the robber space before.

\subsection{A counterexample to the closure of \texorpdfstring{$\dtwSCE{}$}{SC\_0}- and \texorpdfstring{\tdtwSCD}{SC\_d-directed \treewidth}}
\label{sec:SC0andSCDnotClosed}

This section shows that the two notions \tdtwSCE and \tdtwSCD are not closed under taking butterfly minors.

\ThmSCENotClosed*
\begin{proof}
    This is witnessed by the digraphs $D_2$ and $D_2'$ depicted in \cref{fig:D2,fig:D2'} as $D_2 \preccurlyeq_b D_2'$, but $\dtwSCE{D_2} \nleq \dtwSCE{D_2'}$.
	Indeed, there is an \dtdSCE\, shown in \cref{fig:dtd3_D2'}, which proves that $\dtwSCE{D_2'}$ $\leq 3$.
    However, we have $\dtwSCE{D_2} \geq 4$, due to \cref{cor:D2_c&r_game}.
\end{proof}

\begin{figure}[!ht]
\begin{center}
	\begin{tikzpicture}[grow=right ,sibling distance=35pt, level distance=73pt, scale=.8,transform shape,level 1/.style={sibling distance=3cm},level 2/.style={sibling distance=1.5cm}, level 3/.style={sibling distance=1cm}]
	\tikzstyle{edge from parent}=[> = latex, ->, thick, draw]
	\tikzset{tree/.style={draw,rounded corners,thick, fill=blue!20, minimum size=0.6cm, level distance=1.25cm,sibling distance=2cm,inner sep=2pt}}
	\tikzset{rec/.append style={rectangle,draw=black, thin, fill=white, inner sep=1pt}}
	
	\node [tree] {$0,0'$}
	child {node [tree] {$-5$}
		edge from parent
		node[rec, below=0.3cm] {$0,0'$}
	}
	child {node [tree] {$-1$} 
		child {node [tree] {$-1'$}
			child {node [tree] {$-2$}
				child {node [tree] {$-2'$}
					child {node [tree] {$-3$}
						child {node [tree] {$-3'$}
							child {node [tree] {$-4$}
								edge from parent
								node[rec, below=0.3cm] {$-3,-3'$}
							}
							edge from parent
							node[rec, below=0.3cm] {$-2,-2',-3$}
						}
						edge from parent
						node[rec, below=0.3cm] {$-2,-2',-5$}
					}
					edge from parent
					node[rec, below=0.3cm] {$-1',-2,-5$}
				}
				edge from parent
				node[rec, below=0.3cm] {$-1,-1',-5$}
			}
			edge from parent
			node[rec, below=0.3cm] {$0',-1,-5$}
		}
		edge from parent
		node[rec] {$0,0'$}
	}
	child {node [tree] {$1$} 
		child {node [tree] {$1'$}
			child {node [tree] {$2$}
				child {node [tree] {$2'$}
					child {node [tree] {$3$}
						child {node [tree] {$3'$}
							child {node [tree] {$4$}
								edge from parent
								node[rec, above=0.2cm] {$3,3'$}
							}
							edge from parent
							node[rec, above=0.2cm] {$2,2',3$}
						}
						edge from parent
						node[rec, above=0.2cm] {$2,2',5$}
					}
					edge from parent
					node[rec, above=0.2cm] {$1',2,5$}
				}
				edge from parent
				node[rec, above=0.2cm] {$1,1',5$}
			}
			edge from parent
			node[rec, above=0.2cm] {$0',1,5$}
		}
		edge from parent
		node[rec] {$0,0'$}
	}
	child {node [tree] {$5$}
		edge from parent
		node[rec, above=0.2cm] {$0,0'$}
	};
	
\end{tikzpicture}
\captionof{figure}[An $\dtwSCE{}$- or an \dtdSCD of $D_2'$]{An $\dtwSCE{}$- or an \dtdSCD of $D_2'$ in \cref{fig:D2'} of width $3$, implying $\dtwSCE{D_2'}$ $\leq 3$ and $\dtwSCD{D_2'}$ $\leq 3$.}
\label{fig:dtd3_D2'}
\end{center}
\end{figure}

\ThmSCDNotClosed*
\begin{proof}
    Again, $D_2$ and $D_2'$ from \cref{fig:D2,fig:D2'} yield the counterexample as we have that $D_2 \preccurlyeq_b D_2'$, but $\dtwSCD{D_2} \nleq \dtwSCD{D_2'}$.
	The \dtdSCD\ in \cref{fig:dtd3_D2'} shows that $\dtwSCD{D_2'} \leq 3$.
    Since $\dtwSCE{D} \leq \dtwSCD{D}$ holds for every digraph $D$ (see~\cref{fig:figure_dtw_compare}) and due to \cref{cor:D2_c&r_game}, we have $\dtwSCE{D_2} \geq 4$, and thus $\dtwSCD{D_2} \geq 4$.
\end{proof}

\newpage
\bibliographystyle{alphaurl}
\bibliography{references}

\end{document}